\documentclass[12pt]{article}

\usepackage{amsmath, amssymb, amsthm}
\usepackage{xcolor}

\usepackage{graphicx}
\usepackage{hyperref}
\usepackage{geometry}

\usepackage{pgfplots}
\pgfplotsset{compat=1.18}

\newtheorem{theorem}{Theorem}[section]
\newtheorem{lemma}[theorem]{Lemma}
\theoremstyle{definition}
\newtheorem{definition}[theorem]{Definition}

\newtheorem{proposition}[theorem]{Proposition}
\theoremstyle{remark}
\newtheorem{remark}[theorem]{{\bf Remark}}

\DeclareMathOperator{\sign}{\mathrm{sign}}
\newcommand{\dt}{\partial_t} 
\newcommand{\dx}{\partial_x}

\numberwithin{equation}{section}

\title{The traveling wave solutions of the 1D hyperbolic Keller-Segel equations}
\author{Xin Liu\footnote{\href{mailto:xliu23@tamu.edu}{xliu23@tamu.edu}, Department of Mathematics, Texas A\&M University, College Station, TX 77843-3368}, William Kyle Barker \footnote{\href{mailto:wkbarker@ualr.edu}{wkbarker@ualr.edu}, Department of Mathematics, University of Arkansas at Little Rock, 2801 S. University Ave., Little Rock, AR 72204}}
\date{\today}

\begin{document}

\maketitle

\begin{abstract}
The goal of this paper is to investigate the traveling wave solutions with one-sided far field conditions for the one-dimensional hyperbolic Keller-Segel equations with quorum sensitivity. The traveling wave solutions are piece-wise smooth in the moving coordinate, and satisfy the entropy inequality at the discontinuity.

\smallskip 

{\par\noindent\bf Keywords:} Hyperbolic Keller-Segel equations, Traveling wave solutions, Global entropy weak solutions.

{\par\noindent\bf MSC2020:} 35D30, 35L03, 35Q92.
\end{abstract}

\section{Introduction}

\subsection{The hyperbolic Keller-Segel system with quorum sensitivity}

Let $ \sigma \in [0,1] $ be the density of bacteria
and $ S $ be the chemical potential. The hyperbolic Keller-Segel system with logistic/quorum sensitivity is given by the following system of PDEs, \cite{perthameExistenceSolutionsHyperbolic2009,leeThresholdShockFormation2015}:
\begin{subequations}
    \label{sys:hyperbolicKS}
    \begin{align}
        \label{eq:continuity} \partial_t \sigma + \partial_x (\sigma (1-\sigma) \partial_x S) & = 0, \\
        \label{eq:potential} - \partial_{xx} S + S & = \sigma.
    \end{align}
\end{subequations}

The goal of this work is to investigate the non-trivial stationary and traveling wave solutions to system \eqref{sys:hyperbolicKS} with either one-sided far field vacuum state,
\begin{equation}
    \label{far-field-vacuum}
    S (-\infty) = \sigma(-\infty) = 0. 
\end{equation}
or one-sided non-vacuum far field,
\begin{equation}
    \label{non-vacuum-far-field}
    S(-\infty) = \sigma(-\infty) = \sigma_\infty \in (0,1]. 
\end{equation}

Notice that, thanks to the symmetry $ x \rightarrow -x $ of system \eqref{sys:hyperbolicKS}, \eqref{far-field-vacuum} or \eqref{non-vacuum-far-field} is equivalent to the far field vacuum/non-vacuum condition at $ + \infty $. In addition, $ \sigma \equiv S \equiv 0 $ (or correspondingly, $ \sigma \equiv S \equiv \sigma_\infty $) is a trivial solution. The existence of non-trivial stationary and traveling wave solutions implies the non-uniqueness of solutions to the problem. 

\subsection{Backgrounds and motivations}

The (Patlak-)Keller-Segel system was introduced in \cite{patlakRandomWalkPersistence1953,kellerInitiationSlimeMold1970}, to describe the collective motion of cells that are attracted/repelled by a self-emitted chemical substance. We refer to \cite{horstmann1970PresentKellerSegel2003,hillenUsersGuidePDE2009} for historic reviews and literature. 

There are a large amount of literature focusing on the parabolic-elliptic or parabolic-parabolic KS system of production type. There exists a critical total mass, such that solutions blow up if and only if the initial total mass is in the supercritical regime, where the diffusion is not strong enough to counterbalance the growth. See \cite{eganafernandezUniquenessLongTime2016,blanchetTwodimensionalKellerSegelModel2006,blanchetInfiniteTimeAggregation2008} for the study in two space dimensions. Similar results for the degenerate KS system can be found in \cite{sugiyamaGlobalExistenceSubcritical2006,sugiyamaApplicationBestConstant2007,blanchetCriticalMassPatlak2009}.

A conditional global stability result was obtained in 
\cite{feireislConvergenceEquilibriaKeller2007}. Recently, with small initial data, \cite{hsiehLongtimeDynamicsClassical2024} shows the global stability of the constant state. 

On the other hand, the blow up profile for the KS system in three space dimensions is first obtained in
\cite{soupletBlowupProfilesParabolic2019} and later in
\cite{baiBlowupProfileKeller2025}. See  \cite{liuFiniteTimeBlowup2025} for the blow up profile for the Keller-Segel-Navier-Stokes system. The study of weak solutions in the scaling invariant class can be found in \cite{kozonoExistenceUniquenessTheorem2012}. We refer to 
\cite{hieberStrongSolutionsKellerSegelNavierStokes2025,naGlobalWellposednessTwodimensional2024, chaeExistenceSmoothSolutions2012,winklerGlobalLargeDataSolutions2012,winklerStabilizationTwodimensionalChemotaxisNavierStokes2014,ahnStabilityInstabilityFully2026,tanTimePeriodicStrong2021} for the study of the KS system coupled with viscous fluids. 

For the hyperbolic(-elliptic) KS system of production type, the solution will blow up in finite time \cite{liuLargeFrictionLimit2026}. However, taking into account the logistic/quorum sensitivity as in \eqref{sys:hyperbolicKS}, the solution (if exists) will remain bounded. In \cite{perthameExistenceSolutionsHyperbolic2009}, 
a global weak solution was constructed for the KS system with logistic/quorum sensitivity. In general, however, a smooth solution will form a shock in finite time for such a system, as shown in  \cite{leeThresholdShockFormation2015, mengGlobalWellposednessBlowup2024,naFinitetimeBlowupHyperbolic2024} for both production and consumption types. Notably, the blow up is not the consequence of vacuum, or large (in the Sobolev norm) data, but purely the shock formation similar to Burgers' equation.

In particular, the result in \cite{perthameExistenceSolutionsHyperbolic2009} has shown that the entropy condition is not enough to select a unique global weak solution. 

Our goal of this paper is to construct the traveling wave solutions to system \eqref{sys:hyperbolicKS} as a class of global weak solutions. The traveling wave solutions we seek are piece-wise smooth, and satisfy the entropy condition \eqref{eq:entropy_inequality}, as in \cite{perthameExistenceSolutionsHyperbolic2009}. In particular, we construct a class of global weak solutions to system \eqref{sys:hyperbolicKS}. 

The remainder of this paper is organized as follows. In Section 2, we derive the entropy inequality and the corresponding Rankine–Hugoniot jump conditions for admissible discontinuities. Section 3 is devoted to the construction of stationary solutions with vacuum far field, where we analyze the possible types of jumps and establish the existence of a one-parameter family of piecewise smooth profiles. In Section 4, we construct traveling wave solutions with vacuum far field, treating both the continuous profiles and those with a single discontinuity, and we provide a complete classification of the possible wave speeds and jump configurations. The case of non-vacuum far field is considered in Section 5, where we first classify the stationary solutions and then show that all traveling wave profiles are necessarily continuous, giving their full characterization. Finally, Section 6 contains a summary of our results and conclusion.

\subsection{Main results}

\begin{theorem}[Stationary solutions]
\label{thm:stn-sol}
\begin{enumerate}
    \item (Vacuum far field) 
For any fixed constant $ A_0 \leq 0 $, 
there exists a class of stationary solutions to system \eqref{sys:hyperbolicKS} with \eqref{far-field-vacuum} satisfying the entropy inequality \eqref{eq:entropy_inequality}, below, centered at $ x_0 \in \mathbb R $, given by
\begin{subequations}\label{stn-sol}
\begin{equation}
    \label{eq:stn-first-kind}
    (\sigma_s(x), S_s(x)) = \begin{cases}
        (0 , A_0 e^x), & x \leq x_0, \\
        (1, 1 + C_0 e^x + D_0 e^{-x}), & x_0 < x , 
    \end{cases}
\end{equation}
with 
\begin{equation}
    \label{eq:stn-24}
    D_0 e^{-x_0} = - \frac{1}{2}, \qquad C_0 e^{x_0} = A_0 e^{x_0}- \frac{1}{2}, \qquad \forall \ A_0 \leq 0. 
\end{equation}
\end{subequations}

\item (Non-vacuum far field) For $ \sigma_\infty \in (0,1) $, system \eqref{sys:hyperbolicKS} with \eqref{non-vacuum-far-field} admits a class of stationary solutions satisfying the entropy inequality \eqref{eq:entropy_inequality}, below, given by 
either
\begin{itemize}
    \item \begin{equation}
    \label{eq:stn-nv-105}
    (\sigma_s(x),S_s(x)) = \begin{cases}
        (\sigma_\infty,\sigma_{\infty}), & x \leq x_0, \\
        (0,A_0 e^x + B_0 e^{-x}), & x_0 < x , 
    \end{cases} 
\end{equation}
with 
\begin{equation}
    \label{eq:stn-nv-106}
    \sigma_\infty \in (0,1),\qquad A_0 e^{x_0} = B_0 e^{-x_0} = \frac{1}{2} \sigma_\infty, \qquad A_0, \ B_0 > 0;
\end{equation} or 
\item  \begin{equation}
    \label{eq:stn-nv-205}
    (\sigma_s(x),S_s(x)) = \begin{cases}
        (\sigma_\infty,\sigma_{\infty}), & x \leq x_0, \\
        (1,1 + C_0 e^x + D_0 e^{-x}), & x_0 < x , 
    \end{cases} 
\end{equation}
with 
\begin{equation}
    \label{eq:stn-nv-206}
    \qquad \sigma_\infty \in  (0,1), \qquad C_0 e^{x_0} = D_0 e^{-x_0} = \frac{\sigma_\infty - 1}{2}, \qquad C_0, \ D_0 < 0.  
\end{equation}
\end{itemize}

\end{enumerate}
\end{theorem}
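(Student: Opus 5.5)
The plan is to verify directly that each displayed profile is an entropy weak solution. I would use the weak formulation and the entropy inequality \eqref{eq:entropy_inequality} from Section 2, together with the Rankine--Hugoniot and entropy jump conditions derived there. Since the profiles are time independent and piecewise smooth with a single possible discontinuity at $x_0$, the verification splits into three parts: (i) the elliptic equation \eqref{eq:potential} on each piece plus a $C^1$ matching of $S$ at $x_0$; (ii) the continuity equation \eqref{eq:continuity} on each piece plus the Rankine--Hugoniot condition with speed $0$; (iii) the entropy inequality on each piece plus its jump version at $x_0$. The far-field conditions \eqref{far-field-vacuum} and \eqref{non-vacuum-far-field} are immediate from the formulas: $A_0e^x\to0$ as $x\to-\infty$, and $S\equiv\sigma_\infty$ on $x\le x_0$ in the second case.

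\textbf{Step (i): the potential.} On each piece $-S''+S=\sigma$ with $\sigma$ constant, so $S$ is the constant $\sigma$ plus a combination of $e^{\pm x}$. On the half-line reaching $-\infty$, only the decaying or constant part survives. Because $\sigma\in L^\infty$, \eqref{eq:potential} holds in the distributional sense exactly when $S$ and $S_x$ are continuous at $x_0$. I would impose these two conditions and check that they are precisely \eqref{eq:stn-24}, \eqref{eq:stn-nv-106} and \eqref{eq:stn-nv-206}.
\begin{itemize}
\item In the vacuum case, continuity of $S$ reads $A_0e^{x_0}=1+C_0e^{x_0}+D_0e^{-x_0}$, and continuity of $S_x$ reads $A_0e^{x_0}=C_0e^{x_0}-D_0e^{-x_0}$. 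Solving gives $D_0e^{-x_0}=-\tfrac12$ and $C_0e^{x_0}=A_0e^{x_0}-\tfrac12$, so $S_x(x_0)=A_0e^{x_0}$.
\item In the two non-vacuum cases, the left state is constant, so $S_x(x_0)=0$. Matching the derivative forces $A_0e^{x_0}=B_0e^{-x_0}$ (resp.\ $C_0e^{x_0}=D_0e^{-x_0}$). Matching the value then gives $\tfrac12\sigma_\infty$ (resp.\ $\tfrac12(\sigma_\infty-1)$), which also fixes the signs $A_0,B_0>0$ and $C_0,D_0<0$.
\end{itemize}

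\textbf{Step (ii): the continuity equation.} On every piece $\sigma\in\{0,\sigma_\infty,1\}$ is constant. The flux $\sigma(1-\sigma)S_x$ vanishes identically wherever $\sigma\in\{0,1\}$. Where $\sigma=\sigma_\infty$ it equals $\sigma_\infty(1-\sigma_\infty)\cdot 0$, because $S$ is constant there. Hence the flux is $\equiv0$ on both sides of $x_0$. The Rankine--Hugoniot condition with zero speed, namely $[\sigma(1-\sigma)S_x]=0$, is therefore trivially satisfied, and $\partial_t\sigma=0$ holds in the weak sense.

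\textbf{Step (iii): the entropy inequality (main obstacle).} This is where the restriction $A_0\le0$ must enter, so I expect it to be the main obstacle. Away from $x_0$ the entropy inequality holds with equality: $\sigma$ is constant there, and for $\sigma\in\{0,1\}$ or $S_x\equiv0$ both the entropy flux and the source terms (which involve $\sigma(1-\sigma)$ or $S_{xx}$ times derivatives of $\eta$) vanish. At $x_0$ the jump part of \eqref{eq:entropy_inequality} for a stationary discontinuity reduces, since $S_x$ is continuous, to a condition of the form
\[
S_x(x_0)\,\bigl[q(\sigma)\bigr]_{x_0^-}^{x_0^+}\le 0
\]
for every convex entropy $\eta$ with associated flux $q$, where $q'=\eta'(1-2\sigma)$.
\begin{itemize}
\item In the non-vacuum cases $S_x(x_0)=0$, so this holds with equality.
\item In the vacuum case $S_x(x_0)=A_0e^{x_0}$. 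I would compute $q(1)-q(0)=\int_0^1\eta'(s)(1-2s)\,ds$. Integrating by parts, this equals $\int_0^1 \eta''(s)\,s(1-s)\,ds\ge0$ for every convex $\eta$. The product with $A_0e^{x_0}$ therefore has a definite sign, and the inequality holds exactly when $A_0\le0$.
\end{itemize}
I would double-check the orientation conventions of \eqref{eq:entropy_inequality} in Section 2 carefully. If they produced the opposite sign, the admissible range would flip, so I would verify this against the Lax-type characteristic speeds $S_x(1-2\sigma)$ on either side of the jump.
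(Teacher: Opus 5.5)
\textbf{There is a genuine gap, in the one step where $A_0\le 0$ is supposed to enter.} Your steps (i) and (ii) are fine. Step (iii) is also fine in the two non-vacuum cases, because there $\partial_xS(x_0)=0$ removes the singular part of the entropy production.

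In the vacuum case, your jump condition $\partial_xS(x_0)\,[[q]]\le 0$ is the correct consequence of \eqref{eq:entropy_inequality}. The error is in the integration by parts, which has the wrong sign. Since $g(s)=s(1-s)$ vanishes at both endpoints,
\[
q(1)-q(0)=\int_0^1\eta'(s)(1-2s)\,ds=\bigl[\eta'(s)\,s(1-s)\bigr]_0^1-\int_0^1\eta''(s)\,s(1-s)\,ds=-\int_0^1\eta''(s)\,s(1-s)\,ds\le 0 .
\]
For example, $\eta=\sigma^2$ gives $q=\sigma^2-\tfrac43\sigma^3$ and $q(1)-q(0)=-\tfrac13$.

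Your own jump condition therefore forces $A_0e^{x_0}\ge 0$, and for $A_0<0$ the profile genuinely fails the entropy inequality:
\begin{itemize}
\item With $\eta=\sigma^2$, the smooth parts cancel exactly and the entropy production is the measure $-\tfrac13A_0e^{x_0}\,\delta_{x=x_0}$, which is positive.
\item Every Kru\v{z}kov entropy with $\sigma_0\in(0,1)$ fails the same way, since then $[[q_{\sigma_0}]]=-2\sigma_0(1-\sigma_0)$.
\item The Lax check you proposed agrees. The characteristic speeds $\partial_xS\,(1-2\sigma)$ are $A_0e^{x_0}$ on the vacuum side and $-A_0e^{x_0}$ on the saturated side. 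For $A_0<0$, characteristics leave the stationary discontinuity, so it is an expansion shock.
\end{itemize}

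\textbf{Consequence.} With the sign corrected, your direct verification proves part 2 and proves part 1 for $A_0\ge 0$. Within the stated range $A_0\le 0$, that is only $A_0=0$, and no argument can cover $A_0<0$.

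\textbf{How the paper reaches $A_0\le 0$.} It argues differently: it classifies the pieces into Types I--III and applies \eqref{eq:jump-3} at $x_0$ to get $\partial_xS_s(x_0)\le 0$. However, \eqref{eq:jump-3} inherits its orientation from \eqref{eq:RH-3}. For $\partial_t\eta+\partial_xQ\le 0$, the singular part on $x=x(t)$ is $\bigl(-x'(t)[[\eta]]+[[Q]]\bigr)\delta$. So the jump inequality should read $x'(t)[[\eta]]-[[\partial_xS\,q]]\ge 0$, and carrying this through the Kru\v{z}kov computation reverses both cases of \eqref{eq:jump-3}. Your conclusion matches the paper's only because each argument contains one sign slip.

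\textbf{A smaller correction.} Away from $x_0$, the entropy flux $\partial_xS\,q(\sigma)$ does not vanish on the pieces where $\sigma\in\{0,1\}$. What vanishes is the full left-hand side of \eqref{eq:entropy_inequality}, which for constant $\sigma$ equals $g(\sigma)\eta'(\sigma)\,\partial_{xx}S$.
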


\begin{theorem}[Traveling wave solutions]
\label{thm:tvl-sol}
There exist traveling wave solutions
\begin{equation}
    \sigma(t,x)=\sigma_c(\xi),
    \qquad
    S(t,x)=S_c(\xi),
    \qquad
    \xi=x-ct,
\end{equation}
satisfying the entropy inequality \eqref{eq:entropy_inequality}, below, for all $ c $, to system \eqref{sys:hyperbolicKS} with either \eqref{far-field-vacuum} or \eqref{non-vacuum-far-field}. 
\begin{enumerate}
\item 
In the case of with vacuum far field \eqref{far-field-vacuum}, the right-going ($c > 0$) and the left-going ($ c< 0 $) traveling wave solutions can be either continuous (given in Propositions \ref{prop:tw-cnt} and \ref{prop:lg-cnt-tw}, below), or piece-wise smooth with only one discontinuity/jump (given in Propositions \ref{prop:rg-jump-tw} and \ref{prop:lg-jump-tw}, below). 
    \item In the case of with non-vacuum far field \eqref{non-vacuum-far-field}, $ \sigma_\infty \in (0,1) $, the traveling wave profile can only be continuous (as shown in Proposition \ref{prop:tw-nv-cnt}, below), given by Propositions \ref{prop:tw++}, \ref{prop:tw+-}, \ref{prop:tw-+}, and \ref{prop:tw--}, below. 
\end{enumerate}
\begin{remark}
\label{remark:thm}
    The stationary solutions and the traveling wave solutions for the saturated far field, i.e., $ \sigma_\infty = 1 $, are similar to the case of vacuum far field, i.e., $ \sigma_\infty = 0 $, by applying the change of variables $ \eta := 1-\sigma $ and $ Q:= 1-S $. See section \ref{subsec:saturation-far-field}, below.
\end{remark}

\end{theorem}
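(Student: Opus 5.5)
The plan is to reduce the traveling wave problem to an ODE system in the moving coordinate $\xi = x-ct$, solve it on each smooth piece, and glue the pieces using the Rankine--Hugoniot conditions and the entropy inequality of Section 2. Substituting $\sigma = \sigma_c(\xi)$, $S = S_c(\xi)$ into \eqref{eq:continuity} gives $\big(\sigma_c(1-\sigma_c)S_c' - c\sigma_c\big)' = 0$, so on any smooth piece
\begin{equation*}
\sigma_c(1-\sigma_c)S_c' - c\,\sigma_c = K
\end{equation*}
for a constant $K$. The far-field condition fixes $K=0$ on the leftmost piece in both the vacuum and the non-vacuum cases, since $S_c'(-\infty)=0$ and $\sigma_c(-\infty)=\sigma_\infty$ forces $K = -c\sigma_\infty$. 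Equation \eqref{eq:potential} gives $-S_c''+S_c=\sigma_c$, and $S_c \in C^1$ across any jump. On each piece we therefore have three options: $\sigma_c\equiv 0$; $\sigma_c \equiv 1$ (possible only when $K=-c$); or $\sigma_c$ determined algebraically from $S_c'$ by the quadratic relation above. The third option turns the problem into a planar autonomous ODE for $(S_c, S_c')$, which we analyze by phase-plane methods.

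For the vacuum far field, $K=0$ gives either $\sigma_c\equiv 0$ or $(1-\sigma_c)S_c' = c$, i.e.\ $\sigma_c = 1 - c/S_c'$. Then $S_c''=S_c-1+c/S_c'$, and we look for the orbit leaving the equilibrium $(S,S')=(0,0)$ in the correct direction. The sign of $c$ decides whether $\sigma_c = 1-c/S_c'$ lies in $[0,1]$ near the vacuum state. This yields the continuous profiles of Propositions~\ref{prop:tw-cnt} and~\ref{prop:lg-cnt-tw}. For the jump profiles we prescribe a jump at $\xi_0$. The Rankine--Hugoniot condition $[\sigma(1-\sigma)S' - c\sigma]=0$ with $S'$ continuous relates the left and right states, and the entropy inequality \eqref{eq:entropy_inequality} selects the admissible sign of the jump, in the same way as for the Lax condition of the flux $f(\sigma)=\sigma(1-\sigma)S'(\xi_0)$ at speed $c$. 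We then continue the right piece by the same ODE with the new constant $K$. The goal is to show that at most one jump can occur: after one admissible jump the entropy condition and the monotonicity of $S_c'$ rule out a second. This gives Propositions~\ref{prop:rg-jump-tw} and~\ref{prop:lg-jump-tw}.

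For the non-vacuum far field we have $K=-c\sigma_\infty$, and the equilibrium $(\sigma_\infty,0)$ is an interior state of the ODE. To prove Proposition~\ref{prop:tw-nv-cnt}, we combine the Rankine--Hugoniot relation with the entropy inequality at a putative jump, using that $S_c' \to 0$ from the left forces the flux to be strictly concave in $\sigma$ near $\sigma_\infty$. We then aim to show that no admissible discontinuity is compatible with the left state determined by the far field. After that, the continuous profiles split according to the signs of $c$ and of the initial departure direction of $S_c'$, giving the four cases of Propositions~\ref{prop:tw++}--\ref{prop:tw--}. Each case comes from the linearization of the ODE at the saddle equilibrium $(\sigma_\infty,\sigma_\infty)$, which has a one-dimensional unstable manifold leaving it on each side. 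The case $\sigma_\infty=1$ reduces to the vacuum case via Remark~\ref{remark:thm}.

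The main obstacle is the global phase-plane analysis. We must show that the orbit from the far-field equilibrium keeps $\sigma_c\in[0,1]$ for all $\xi$, or else reaches a point where an entropy-admissible jump to $\sigma\in\{0,1\}$ or to the other root of the quadratic is available. Where the orbit hits $S_c'=c$, at which $\sigma_c$ degenerates, we have to decide whether it must jump there. I would first look for monotone quantities, such as the energy $\tfrac12 (S_c')^2 - \tfrac12 S_c^2 + \int \sigma_c\, dS_c$, to trap the orbit in an invariant region, and then treat each sign combination separately.
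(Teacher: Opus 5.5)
Your overall frame (integrate once, use Rankine--Hugoniot and entropy at jumps, do phase-plane analysis on smooth pieces) is right, but the vacuum-case mechanism you propose would not work.

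\textbf{Vacuum far field.} The active branch cannot leave $(S,S')=(0,0)$ for either sign of $c$. On it, $\sigma_c=1-c/S_c'\in(0,1)$ forces $c/S_c'\in(0,1)$, i.e.\ $S_c'$ has the sign of $c$ and $|S_c'|>|c|$. So the branch stays away from $S_c'=0$, and $(0,0)$ is not even in the domain of $S_c''=S_c-1+c/S_c'$.

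The missing idea is Lemma~\ref{lm:no-asymptotic-vaccum}: the far field is reached only through an \emph{exact} vacuum half-line, on which $S_c=Ae^{\xi}$. The active branch is then launched from a free boundary $\xi_0=0$. Continuity of $\sigma_c$ there forces $S_c'(0)=c$, hence $A=c$, which gives the data $S_c=S_c'=c$ of Propositions~\ref{prop:tw-cnt} and~\ref{prop:lg-cnt-tw} for both signs of $c$. Otherwise $\sigma_c$ jumps at $\xi_0$.

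Also, $[[\sigma(1-\sigma)S'-c\sigma]]=0$ says exactly that $K$ is the same on every piece, so there is no \emph{new constant} after a jump. With $K=0$ globally, two positive states at a jump would both equal $1-c/S_c'(\xi_0)$. Hence for $c\neq0$ every jump is between $0$ and $1-c/S_c'$ (Proposition~\ref{prop:no-jump-tw}), and the entropy condition ties its direction to the sign of $c$. That classification is what gives at most one jump. It is not a jump forced where the orbit meets $S_c'=c$: the orbit never returns there, since at $S_c'=c$ one has $S_c''=S_c$, which has the sign of $c$.

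One caution about your Lax-type selection. It makes increasing jumps admissible when $S_c'>0$ (concave flux), which is the opposite of \eqref{eq:jump-3} as printed. Your orientation is the correct one. Testing \eqref{eq:entropy_inequality} against $\phi\ge0$ across the curve reduces it to $-\int(x'[[\eta]]-\dx S\,[[q]])\phi\,dt\le0$, so \eqref{eq:RH-3} should read $\ge0$. With that sign the qualitative dichotomy of item~1 survives, but the two one-jump families trade signs of $c$:
the drop to vacuum after the continuous branch occurs for $c<0$, and the jump from vacuum into the active branch (now with $A>c>0$) occurs for $c>0$. So you will not reproduce Propositions~\ref{prop:rg-jump-tw} and~\ref{prop:lg-jump-tw} as stated.

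\textbf{Non-vacuum far field.} Here $K=-c\sigma_\infty$; your first paragraph also says $0$, which is a slip. The no-jump statement is not an entropy or concavity fact, and $S_c'\to0$ from the left says nothing about the sign of $S_c'$ at an interior jump. What you need is that Rankine--Hugoniot alone forbids jumps. For every value $T$ of $S_c'$, the function $h(\sigma)=\sigma(1-\sigma)T-c(\sigma-\sigma_\infty)$ has $h(0)=c\sigma_\infty$ and $h(1)=-c(1-\sigma_\infty)$, of opposite signs. So it has exactly one root in $[0,1]$, and that root is simple. Hence the states $0$, $1$ and the \emph{other root} are never available, and $\sigma_c=\sigma_c(T)$ is a smooth function of the continuous $S_c'$. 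The paper reaches this through the quadratic $p$ in Lemmas~\ref{lm:no-trivial-point} and~\ref{lm:no-jump-tw-nv}.

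Once you have this, your saddle argument is a genuinely different and cleaner route than the paper's. With $R=S_c-\sigma_\infty$ and $T=S_c'$, the system $R'=T$, $T'=R+\sigma_\infty-\sigma_c(T)$ is a smooth planar system. Its linearization at the origin, $\begin{pmatrix}0&1\\1&-\sigma_\infty(1-\sigma_\infty)/c\end{pmatrix}$, has determinant $-1$.

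The unstable manifold theorem then gives both branches and shows that every nontrivial profile lies on one of them. This replaces several pieces of the paper's argument:
\begin{itemize}
\item the $\varepsilon$-regularized integral curves of \eqref{101-tw-nv} in the variables $(R,T^2)$;
\item the Arzel\`a--Ascoli limit and the uniqueness discussion at \eqref{106-tw-nv};
\item Lemmas~\ref{lm:no-ttrivial-point}--\ref{lm:no-crossing-asymptotics}.
\end{itemize}

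The quadrants $\{R>0,T>0\}$ and $\{R<0,T<0\}$ are forward invariant, since $T'=R$ on $\{T=0\}$ and $R'=T$ on $\{R=0\}$. The bound $|T'|\le|R|+1$ gives global existence. Finally, $\sign(\sigma_c-\sigma_\infty)=\sign(cT)$ sorts the two branches for each sign of $c$ into the four cases of Propositions~\ref{prop:tw++}--\ref{prop:tw--}. So the global phase-plane obstacle you anticipate does not arise in this case, and no energy functional is needed.
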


\section{The entropy inequality and the jump conditions}
\label{sec:entropy}

For a smooth solution to system \eqref{sys:hyperbolicKS}, the entropy equality is calculated by multiplying \eqref{eq:continuity} with $ \eta'(\sigma) $, for any convex entropy $ \eta = \eta(\sigma) $. This yields 
\begin{equation}
    \label{eq:entropy_equality}
    \partial_t \eta(\sigma) + \partial_x ( \partial_x S q(\sigma) ) + (\sigma-S) (q - g \eta')(\sigma) = 0,  
\end{equation}
where the logistic sensitivity function $ g = g(\sigma) $ is defined as
\begin{equation}
    \label{def:g} 
    g(\sigma):= \sigma (1-\sigma),
\end{equation} 
and $ q(\sigma) $ is the `entropy flux' satisfying
\begin{equation}
    \label{def:q}
    q'(\sigma) = g'(\sigma) \eta'(\sigma)= (\eta(\sigma) - 2\sigma \eta(\sigma))' + 2 \eta(\sigma).
\end{equation}
For weak solutions, one expects the following entropy inequality:
\begin{equation}
    \label{eq:entropy_inequality}
    \partial_t \eta(\sigma) + \partial_x ( \partial_x S q(\sigma) ) + (\sigma-S) (q - g \eta')(\sigma) \leq 0.
\end{equation}

\smallskip 

The goal of this section is to derive the jump conditions across the discontinuity. 
Consider a solution with a discontinuity in the space-time $ \mathbb R \times \mathbb R $ given by $ \lbrace (t,x) = (t, x(t)) \in \mathbb R \times \mathbb R \rbrace $. Then the space-time normal vector to the discontinuity is given by 
\begin{equation}
    \label{def:normal}
    \vec \nu := \frac{1}{\sqrt{1 + \vert x'(t) \vert^2}} \begin{pmatrix}
        x'(t) \\ -1
    \end{pmatrix} 
\end{equation}
pointing to the negative direction in space. In addition, for any function $ f = f(t,x) $, the left and right values of $ f $ at the discontinuity are denoted by
\begin{equation}
    \label{def:l-r-value}
    f^-(t) := \lim_{x \to x(t)^-} f(t,x), \qquad \text{and} \qquad f^+(t) := \lim_{x \to x(t)^+} f(t,x),
\end{equation}
respectively, and the jump across the discontinuity is given by 
\begin{equation}
    \label{def:jump}
    [[f]] := f^+ - f^-. 
\end{equation}

In this section, our goal is to derive the following proposition:
\begin{proposition}[Jump conditions]
    \label{prop:jump-entropy}
    Let $ \lbrace (t,x) = (t,x(t)) \in \mathbb R \times \mathbb R \rbrace $ be a space-time curve of the discontinuity of the solution to system \eqref{sys:hyperbolicKS}. Then at the discontinuity curve, one has that
    \begin{align}
        [[S]] = [[\dx S]] & = 0, \label{eq:jump-1} \\
        x'(t) & = \dx S(1- \sigma^+ - \sigma^-), \label{eq:jump-2} \qquad \text{and} \\
        \label{eq:jump-3}& \begin{cases}
            \dx S \leq 0 & \text{if} \ \sigma^+ > \sigma^-,\\
            \dx S \geq 0 & \text{if} \ \sigma^+ < \sigma^-.
        \end{cases}
    \end{align}
\end{proposition}

\begin{proof}

\smallskip 

The Rankine-Hugoniot condition for system \eqref{sys:hyperbolicKS} reads
\begin{align}
    \label{eq:RH-1}
    x'(t) [[\sigma]] - [[\sigma (1-\sigma) \partial_x S]] & = 0 \qquad \text{on} ~ \lbrace (t,x) = (t, x(t)) \rbrace, \\
    \label{eq:RH-2}
    [[\dx S]] & = 0 \qquad \text{on} ~ \lbrace (t,x) = (t, x(t)) \rbrace.
\end{align}
Similarly, the Rankine-Hugoniot condition for the entropy inequality \eqref{eq:entropy_inequality} reads
\begin{equation}
    \label{eq:RH-3}
    x'(t)[[\eta(\sigma)]]  - [[ \dx S q(\sigma)]] \leq  0 \qquad \text{on} ~ \lbrace (t,x) = (t,x(t)) \rbrace. 
\end{equation}

\smallskip

Then one can calculate from \eqref{eq:RH-1}--\eqref{eq:RH-3} that
\begin{align}
    [[S]] = [[\dx S]] & = 0, \label{eq:jump-1-0} \\
    x'(t) & = \dx S(1- \sigma^+ - \sigma^-), \label{eq:jump-2-0} \qquad \text{and} \\
    \dx S (1-\sigma^+ - \sigma^- )[[\eta(\sigma)]] & \leq  \dx S [[q(\sigma)]]. \label{eq:jump-3-0}
\end{align}
In particular, by taking Kru\v{z}kov's absolute value entropy \cite{kruzkovFIRSTORDERQUASILINEAR1970,bouchutKruzkovEstimatesScalar1998}, for $ \forall\ \sigma_0 \in [0,1] $,
\begin{equation}
\label{entropy-abs}
\eta_{\sigma_0}(\sigma) := \vert \sigma-\sigma_0 \vert,
\end{equation}
one can calculate from \eqref{def:q} that 
\begin{equation}
    \label{entropy-flux-abs}
    q_{\sigma_0}(\sigma):= \vert \sigma - \sigma_0 \vert - 2 \sigma \vert \sigma - \sigma_0\vert + \vert \sigma - \sigma_0\vert^2 \sign(\sigma-\sigma_0) + C
\end{equation}
for some constant $ C \in (0,\infty) $. 

\smallskip 

In particular, for $ \sigma_0 \geq \max\lbrace \sigma^+, \sigma^- \rbrace $ or $ \sigma_0 \leq \min\lbrace \sigma^+, \sigma^- \rbrace $, \eqref{eq:jump-3} is trivial for the Kru\v{z}kov's entropy. For $ \sigma^- \leq \sigma_0 \leq \sigma^+ $, one has that 
\begin{align*}
    [[\eta_{\sigma_0}(\sigma)]] &= (\sigma^+-\sigma_0) - (\sigma_0 - \sigma^-) = \sigma^+ + \sigma^- - 2 \sigma_0, \\
    [[q_{\sigma_0}(\sigma)]] & = \lbrack (\sigma^+ - \sigma_0) - 2 \sigma^+ ( \sigma^+ - \sigma_0) + (\sigma^+ - \sigma_0)^2 \rbrack \\
    & \qquad - \lbrack (\sigma_0 - \sigma^-) - 2 \sigma^- ( \sigma_0 - \sigma^- ) - (\sigma^- - \sigma_0)^2 \rbrack \\
    & = \sigma^+ + \sigma^- - (\sigma^+)^2 - (\sigma^-)^2 + 2 \sigma_0^2 - 2 \sigma_0.
\end{align*}
Thus \eqref{eq:jump-3} implies
\begin{equation}
\label{eq:jump-4}
    \dx S (\sigma_0 - \sigma^+) (\sigma_0 - \sigma^-) 
    \geq 0.
\end{equation}
Similarly, for $ \sigma^+ \leq \sigma_0 \leq \sigma^- $, one has that 
\begin{equation}
\label{eq:jump-5}
    \dx S (\sigma_0 - \sigma^+) (\sigma_0 - \sigma^-) 
    \leq 0.
\end{equation}
This finishes the proof of Proposition \ref{prop:jump-entropy}. 

\end{proof}

\section{Stationary solutions with vacuum far field}
\label{sec:stationary}

In this section, we are looking for stationary solutions to \eqref{sys:hyperbolicKS}. That is, we consider $ \sigma_s= \sigma_s(x) $ such that 
\begin{subequations}
    \label{sys:stationary}
    \begin{align}
        \label{eq:stn-01} \dx(\sigma_s (1-\sigma_s) \dx S_s) & = 0, \\
        \label{eq:stn-02} - \partial_{xx} S_s + S_s & = \sigma_s.  
    \end{align}
\end{subequations}
Substituting \eqref{eq:stn-02} into \eqref{eq:stn-01} and integrating in $ x $ yield
\begin{equation}
    \label{eq:stn-03}
    (-\partial_{xx} S_s + S_s)(1+\partial_{xx} S_s - S_s) \partial_x S_s = 0, 
\end{equation}
where we have taken that 
\begin{equation}
    \label{eq:stn-04}
    S_s(-\infty) = 0. 
\end{equation}

Then one of the following three conditions must hold at any point $ x \in \mathbb R $:
\begin{align}
    - \partial_{xx} S_s + S_s & = 0, \label{eq:stn-05} \\
    1 + \partial_{xx} S_s - S_s & =0, \label{eq:stn-06} \\
    \partial_x S_s & =0. \label{eq:stn-07}
\end{align}
Correspondingly, $ S_s $ must take the form of 
\begin{align}
    \text{Type I:} \quad && S_s(x) & = A e^x + B e^{-x}, && \qquad (\sigma_s = 0, \text{correspondingly})\label{eq:stn-08} \\
    \text{Type II:} \quad && S_s(x) &= 1 + C e^x + D e^{-x}, && \qquad (\sigma_s = 1, \text{correspondingly})\label{eq:stn-09} \\
    \text{or} \qquad \text{Type III:} \quad && S_s(x) &= E, && \qquad (\sigma_s = E, \text{correspondingly}) \label{eq:stn-10}
\end{align}
for some constants $ A,B,C,D,E \in \mathbb R $, with $ E \in [0,1] $. In general, there are six types of jumps between these three types of solutions. In the following, we will analyze each type of jump. 




Let $ -\infty < x_0 < x_1 < x_2 < \cdots < x_n < \infty $ be the discontinuity points/jumps of $ \sigma_s $.
Thanks to \eqref{eq:stn-04}, it is only possible to have Type I \eqref{eq:stn-08} or Type III \eqref{eq:stn-10} for $ x < x_0 $. 

\subsection{Stationary solution starting with Type I at $ -\infty $}
\label{sec:stationary_first_kind}

Thanks to \eqref{eq:jump-1} and \eqref{eq:stn-04}, it is only possible to have that a jump from Type I \eqref{eq:stn-08} to Type II \eqref{eq:stn-09} at $ x_0 $: 
\begin{equation}
    \label{eq:stn-11}
    S_s(x) = \begin{cases}
        A_0 e^{x}, & x \leq x_0, \\
        1 + C_0 e^x + D_0 e^{-x}, & x_0 < x \leq x_1, 
    \end{cases}
\end{equation}
with, thanks to \eqref{eq:jump-1},
\begin{equation}\label{eq:stn-12}
    A_0 e^{x_0} = C_0 e^{x_0} - D_0 e^{-x_0} = 1 + C_0 e^{x_0} + D_0 e^{-x_0}. 
\end{equation}
Hence 
\begin{equation}
    \label{eq:stn-14}
    D_0 e^{-x_0} = - \frac{1}{2}, \qquad C_0 e^{x_0} = A_0 e^{x_0} - \frac{1}{2},
\end{equation}
and 
\begin{equation}
    \label{eq:stn-15}
    \sigma_s(x_0^-) = 0 < \sigma_s(x_0^+) = 1. 
\end{equation}
Therefore, \eqref{eq:jump-3} implies that $ \dx S_s(x_0) \leq 0 $, and thus
\begin{equation}
    \label{eq:stn-17}
    A_0 \leq 0.
\end{equation}
Therefore, $ C_0 < 0 $ and $ D_0 < 0 $. 

\smallskip 

We claim that at $ x_1 $, it is impossible to have a jump from Type II \eqref{eq:stn-09} to Type III \eqref{eq:stn-10}. Otherwise, at $ x = x_1 $, thanks to \eqref{eq:jump-1}, one has that 
\begin{equation}
    \label{eq:stn-18}
    0 \overset{\eqref{eq:stn-10}}{=} \dx S_s(x_1) = C_0 e^{x_1} - D_0 e^{-x_1} < C_0 e^{x_0} - D_0 e^{-x_0} \overset{\eqref{eq:stn-12}}{=} A_0 e^{x_0} \leq 0,
\end{equation}
since $ C_0, D_0 <0 $, and $ x_1 > x_0 $. This is a contradiction. 

\smallskip 

Hence at $ x = x_1 $, we consider a jump from Type II \eqref{eq:stn-09} to Type I \eqref{eq:stn-08}. That is, 
\begin{equation}
    \label{eq:stn-19}
    S_s(x) = \begin{cases}
        1 + C_0 e^x + D_0 e^{-x}, & x_0 < x \leq x_1, \\
        A_1 e^{x} + B_1 e^{-x}, & x_1 < x \leq x_2, 
    \end{cases}
\end{equation}
with 
\begin{equation}
    \label{eq:stn-20}
    \begin{gathered}
    S_s(x_1) = 1 + C_0 e^{x_1} + D_0 e^{-x_1} =  A_1 e^{x_1} + B_1 e^{-x_1}, \quad \\
    \text{and} \quad \dx S_s(x_1) = C_0 e^{x_1} - D_0 e^{-x_1} =  A_1 e^{x_1} - B_1 e^{-x_1} < 0,
    \end{gathered}
\end{equation}
thanks to \eqref{eq:stn-18}.
In particular, 
\begin{equation}
    \label{eq:stn-21}
    \sigma_s(x_1^-) = 1 > \sigma_s(x_1^+) = 0,
\end{equation}
and the entropy condition \eqref{eq:jump-3} implies that $ \dx S_s(x_1) \geq 0 $, which contradicts \eqref{eq:stn-20}.

\smallskip

Therefore, $ x = x_0 $ is the only jump point and the stationary solution is given by 
\begin{subequations}\label{stn-sol-1}
\begin{equation}
    \label{eq:stn-first-kind}
    (\sigma_s(x), S_s(x)) = \begin{cases}
        (0 , A_0 e^x), & x \leq x_0, \\
        (1, 1 + C_0 e^x + D_0 e^{-x}), & x_0 < x , 
    \end{cases}
\end{equation}
with 
\begin{equation}
    \label{eq:stn-24}
    D_0 e^{-x_0} = - \frac{1}{2}, \qquad C_0 e^{x_0} = A_0 e^{x_0}- \frac{1}{2}, \qquad \forall \ A_0 \leq 0. 
\end{equation}
\end{subequations}

\begin{figure}[htbp]
\centering
\begin{tikzpicture}
\begin{axis}[
    xlabel={$x$},
    ylabel={Value},
    xmin=-4, xmax=4,
    ymin=-1.25, ymax=1.25,
    samples=200,
    grid=both,
    legend pos=south west,
    width=13cm,
    height=5cm
]
\addplot[domain=-5:0, blue, ultra thick] {0};
\addplot[domain=0:2, blue, ultra thick, forget plot] {1 - 0.5*exp(x) - 0.5*exp(-x)};
\addlegendentry{$S_s(x)$}

\addplot[domain=-5:0, red, dashed, ultra thick] {0};
\addplot[domain=0:5, red, dashed, ultra thick, forget plot] {1};
\addlegendentry{$\sigma_s(x)$}

\draw[red, dotted, ultra thick] (axis cs:0,0) -- (axis cs:0,1);

\end{axis}
\end{tikzpicture}
\caption{Solution \eqref{stn-sol-1} for $x_0=0, A_0=0$ or \eqref{stn-sol-2}, below}
\end{figure}

\begin{figure}[htbp]
\centering
\begin{tikzpicture}
\begin{axis}[
    xlabel={$x$},
    ylabel={Value},
    xmin=-4, xmax=4,
    ymin=-1.25, ymax=1.25,
    samples=200,
    grid=both,
    legend pos=south west,
    width=13cm,
    height=5cm
]

\addplot[domain=-5:0, blue, ultra thick] {-exp(x)};
\addplot[domain=0:1, blue, ultra thick, forget plot] {1 - 1.5*exp(x) - 0.5*exp(-x)};
\addlegendentry{$S_s(x)$}

\addplot[domain=-5:0, red, dashed, ultra thick] {0};
\addplot[domain=0:5, red, dashed, ultra thick, forget plot] {1};
\addlegendentry{$\sigma_s(x)$}

\draw[red, dotted, ultra thick] (axis cs:0,0) -- (axis cs:0,1);

\end{axis}
\end{tikzpicture}
\caption{Solution \eqref{stn-sol-1} for $x_0=0, A_0=-1$}
\end{figure}

\subsection{Stationary solution starting with Type III at $ - \infty $}
\label{sec:stationary_second_kind}

Thanks to \eqref{eq:jump-1}, \eqref{eq:stn-04}, and \eqref{eq:stn-10}, the only possible Type III solution for $ S_s $ for $ x < x_0 $ is $ S_s = 0 $, i.e., $ E = 0 $. At $ x = x_0 $, $ S_s $ can only jump from Type III \eqref{eq:stn-10} to Type II \eqref{eq:stn-09}:
\begin{equation}
    \label{eq:stn-001}
    S_s(x) = \begin{cases}
        0, & x \leq x_0, \\
        1 + C_0 e^x + D_0 e^{-x}, & x_0 < x \leq x_1.
    \end{cases}
\end{equation}
Similar as before, the jump conditions \eqref{eq:jump-1} yields that 
\begin{equation}
    \label{eq:stn-002}
    1 + C_0 e^{x_0} + D_0 e^{-x_0} = 0 = C_0 e^{x_0} - D_0 e^{-x_0},
\end{equation}
and hence
\begin{equation}
    \label{eq:stn-003}
    C_0 e^{x_0} = D_0 e^{-x_0} = - \frac{1}{2}.
\end{equation}
Moreover, \eqref{eq:stn-001} satisfies the entropy condition \eqref{eq:jump-3}.

\smallskip 

At $ x = x_1 $, we claim that it is impossible to have a jump from Type II \eqref{eq:stn-09} to Type III \eqref{eq:stn-10}. Indeed, thanks to \eqref{eq:stn-003}, we have $ C_0, D_0 <0 $. Since $ x_1 > x_0 $, one has that
\begin{equation}
    \label{eq:stn-004} 
    \dx S_s (x_1) = C_0 e^{x_1} - D_0 e^{-x_1} < C_0 e^{x_0} - D_0 e^{-x_0} = 0. 
\end{equation} 
But any Type III solution has $ \dx S_s = 0 $. This violates the jump condition \eqref{eq:jump-1}. 

\smallskip 

Now we consider a jump from Type II \eqref{eq:stn-09} to Type I \eqref{eq:stn-08} at $ x = x_1 $:
\begin{equation}
    \label{eq:stn-005}
    S_s(x) = \begin{cases}
        1 + C_0 e^x + D_0 e^{-x}, & x_0 < x \leq x_1, \\
        A_1 e^{x} + B_1 e^{-x}, & x_1 < x \leq x_2. 
    \end{cases}
\end{equation}
Then the jump condition \eqref{eq:jump-1} yields that:
\begin{equation}
    \label{eq:stn-006}
    \begin{gathered}
    S_s(x_1) = 1 + C_0 e^{x_1} + D_0 e^{-x_1} = A_1 e^{x_1} + B_1 e^{-x_1}, \quad \\
    \text{and} \quad \dx S_s(x_1) = C_0 e^{x_1} - D_0 e^{-x_1} = A_1 e^{x_1} - B_1 e^{-x_1} < 0.
    \end{gathered}
\end{equation}
Hence, one has that
\begin{equation}
    \label{eq:stn-007}
    A_1 e^{x_1} = \frac{1}{2} + C_0 e^{x_1}, \qquad B_1 e^{-x_1} = \frac{1}{2} + D_0 e^{-x_1}. 
\end{equation}
However, since $ \sigma_s(x_1^-) =1 > \sigma_s(x_1^+) = 0 $, the entropy condition \eqref{eq:jump-3} is violated thanks to \eqref{eq:stn-006}.

\smallskip 

Therefore, $ x = x_0 $ is the only jump point and the stationary solution is given by 
\begin{subequations} \label{stn-sol-2}
\begin{equation}
    \label{eq:stn-second-kind}
    (\sigma_s(x),S_s(x)) = \begin{cases}
        (0,0), & x \leq x_0, \\
        (1,1 + C_0 e^x + D_0 e^{-x}), & x_0 < x , 
    \end{cases}
\end{equation}
for $ C_0, D_0 < 0 $, and 
\begin{equation}
    \label{eq:stn-008}
    C_0 e^{x_0} = D_0 e^{-x_0} = - \frac{1}{2}.
\end{equation}
\end{subequations}
Notice that, this is exactly \eqref{stn-sol-1} with $ A_0 = 0 $. This finishes the proof of theorem \ref{thm:stn-sol}.

\section{Traveling wave solutions with vacuum far field}
\label{sec:traveling_wave}

In this section, we construct traveling-wave solutions for
\eqref{sys:hyperbolicKS} satisfying a vacuum condition at only one spatial
infinity. Unlike the stationary solutions studied in the previous section,
these profiles are genuinely propagating and develop a nontrivial active region connected to vacuum.

\begin{definition}[One-sided traveling semi-wave]
A one-sided traveling semi-wave for \eqref{sys:hyperbolicKS} is a
traveling-wave solution of the form
\begin{equation}
\label{eq:tw-ansatz}
\sigma(t,x)=\sigma_c(\xi),
\qquad
S(t,x)=S_c(\xi),
\qquad
\xi=x-ct,
\end{equation}
such that the profile approaches a prescribed equilibrium state at one spatial
infinity only. More precisely, there exists an equilibrium state
\[
(\sigma_-,S_-)
\]
satisfying
\[
\sigma_c(\xi)\to\sigma_-,
\qquad
S_c(\xi)\to S_-,
\qquad
\partial_\xi S_c(\xi)\to0
\]
as \(\xi\to-\infty\) or as \(\xi\to+\infty\), while no equilibrium condition
is imposed at the opposite spatial infinity.

Consequently, the profile need not converge to a second equilibrium state.
Instead, it may approach a different asymptotic regime, such as unbounded
growth of \(S\) or convergence of \(\sigma\) to a value different from the
prescribed equilibrium.

Such a solution is characterized by the wave speed
\[
c\in\mathbb R
\]
together with the prescribed equilibrium state.
\end{definition}

In the construction below, we focus on the case where the prescribed
equilibrium is vacuum at
$
\xi\to-\infty
$. 

Substituting the ansatz \eqref{eq:tw-ansatz} into
\eqref{sys:hyperbolicKS} yields
\begin{subequations}
    \label{sys:tw}
\begin{align}
-c\partial_\xi\sigma_c
+\partial_\xi\big(\sigma_c(1-\sigma_c)\partial_\xi S_c\big)
&=0,
\label{eq:tw-01}
\\[4pt]
-\partial_{\xi\xi}S_c+S_c
&=\sigma_c.
\label{eq:tw-02}
\end{align}
\end{subequations}
In the present setting, 
we impose the left vacuum condition
\begin{equation}
\sigma_c(\xi)\to0,
\qquad
S_c(\xi)\to0,
\qquad
\text{as }\xi\to-\infty.
\label{eq:tw-left-vacuum}
\end{equation}
No condition is prescribed as \(\xi\to+\infty\); in particular, the profile
need not approach any equilibrium there.

Integrating \eqref{eq:tw-01} once gives
\[
-c\sigma_c+\sigma_c(1-\sigma_c)\partial_\xi S_c=K,
\]
where \(K \in \mathbb R\) is a constant of integration. As \(\xi\to-\infty\), we have
\[
\sigma_c(\xi)\to0,
\qquad
\partial_\xi S_c(\xi)\to0.
\]
Hence
\[
K
=
\lim_{\xi\to-\infty}
\left(
-c\sigma_c+\sigma_c(1-\sigma_c)\partial_\xi S_c
\right)
=
0.
\]
Consequently, one has the zero-flux relation
\begin{equation}
-c\sigma_c+\sigma_c(1-\sigma_c)\partial_\xi S_c=0, \qquad \forall \ \xi \in \mathbb R. 
\label{eq:tw-03}
\end{equation}

\smallskip

Equation \eqref{eq:tw-03} is automatically satisfied in any region where
\[
\sigma_c=0.
\]
In any active region where
\[
\sigma_c>0,
\]
equation \eqref{eq:tw-03} reduces to
\[
-c+(1-\sigma_c)\partial_\xi S_c=0.
\]
This dichotomy between vacuum and active regions is the basis for the
construction below.

\subsection{Jump and far field vacuum state in traveling wave solutions with non-trivial speed}
\label{subsec:no-jump-tw}
If there is jump discontinuity of $ \sigma_c $, 
without loss of generality, assume the jump point is at $ \xi_0 = 0 $. There there are only types of jump that can happen:
\begin{itemize}
    \item Jump from (left or right) vacuum to (right or left) non-vacuum state, referred to as {\bf type A jump};
    \item Jump from non-vacuum state to another non-vacuum state, referred to as {\bf type B jump}. 
\end{itemize}
{\noindent\bf Type A jump in traveling wave solutions with $ c \neq 0 $:} Without loss of generality, assume that $ \xi = 0 $ is the jump location, and 
\begin{equation}
\label{tw-jump-001}
    \sigma_c(0+) > 0, \qquad \sigma_c(0-) = 0. 
\end{equation}
Then from \eqref{eq:tw-03}, one has that
\begin{equation}
\label{tw-jump-002}
    -c + (1-\sigma_c(0+)) \partial_\xi S_c(0)= 0. 
\end{equation}
This is the same as the Rankine-Hugoniot condition \eqref{eq:jump-2}. Meanwhile, the entropy condition \eqref{eq:jump-3} implies that 
\begin{equation}
    \label{tw-jump-102}
    \partial_\xi S_c(0) \leq 0.
\end{equation}
Thus \eqref{tw-jump-002} and \eqref{tw-jump-102} imply that, since $ c \neq 0 $, 
\begin{equation}
    \label{tw-jump-103}
    c = (1-\sigma_c(0+)) \partial_\xi S_c(0) < 0.
\end{equation}

\smallskip 

Following similar arguments, one can conclude that, if $ \sigma_c(0+) = 0 $ and $ \sigma_c(0-) > 0 $, one has
\begin{equation}
    \label{tw-jump-104}
    c = (1-\sigma_c(0-)) \partial_\xi S_c(0) > 0.
\end{equation}

\smallskip

{\noindent\bf No type B jump in traveling wave solutions with $ c \neq 0 $:} Without loss of generality, assume that
\begin{equation}
    \label{tw-jump-004}
    \sigma_c(0+) = a, \qquad \sigma_c(0-) = b, \qquad a \neq b, \ \text{and} \  a, b \in (0,1]. 
\end{equation}
Then from \eqref{eq:tw-03}, one has that
\begin{equation}
\label{tw-jump-005}
    -c + (1-\sigma_c(0+)) \partial_\xi S_c(0) = 0 = -c + (1-\sigma_c(0-)) \partial_\xi S_c(0),
\end{equation}
where, thanks to \eqref{eq:jump-1}, $ S_c(0+) = S_c(0-) = S_c(0) $. However, this implies that 
\begin{equation}
   \partial_\xi S_c(0)= c = 0, 
\end{equation}
which leads to contradiction. 

We have shown the following proposition:
\begin{proposition}
    \label{prop:no-jump-tw}
    For any traveling wave solutions to system \eqref{sys:hyperbolicKS} with non-zero traveling speed $ c \neq 0 $, there are only two possible jumps:
    \begin{itemize}
        \item either $ \sigma_c $ jumps from zero to non-zero, in which case $ c = (1-\sigma_c(\xi_0+))\partial_\xi S_c(\xi_0) < 0 $;
        \item or $ \sigma_c $ jumps from non-zero to zero, in which case $ c = (1-\sigma_c(\xi_0-))\partial_\xi S_c(\xi_0) > 0 $.
    \end{itemize} 
    Here $ \xi_0 $ is the location of the discontinuity/jump. 
\end{proposition}

\smallskip 

Meanwhile, we claim that there will always be vacuum state connecting the left far field; That is:
\begin{lemma}[No asymptotic vacuum]
\label{lm:no-asymptotic-vaccum}
    For a traveling wave solution with non-trivial speed, i.e., $ c \neq 0 $, satisfying \eqref{sys:tw} and \eqref{eq:tw-left-vacuum}, there exists $ \xi_0 \in \mathbb R $ such that
\begin{equation}
    \label{tw:vacuum-001}
    \sigma_c(\xi) = 0 \qquad \forall \ \xi < \xi_0. 
\end{equation}
\end{lemma}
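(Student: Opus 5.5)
The plan is to argue by contradiction, using the zero-flux relation \eqref{eq:tw-03} together with the decay $\partial_\xi S_c(\xi)\to 0$ as $\xi\to-\infty$. That decay is part of the definition of a one-sided traveling semi-wave, and it is what was used to derive $K=0$ in \eqref{eq:tw-03}. The key observation is that in any active region the chemical gradient is bounded away from zero by $|c|$, which is incompatible with its decay at $-\infty$.

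First I will show that at any point $\xi$ with $\sigma_c(\xi)>0$ (including one-sided limits at jump points), one has $\sigma_c(\xi)<1$. Indeed, if $\sigma_c(\xi)=1$, then \eqref{eq:tw-03} reads $-c\cdot 1+0=0$, so $c=0$, contradicting $c\neq 0$. Hence in the active region we may divide \eqref{eq:tw-03} by $\sigma_c>0$ and obtain
\begin{equation*}
\partial_\xi S_c(\xi)=\frac{c}{1-\sigma_c(\xi)}, \qquad 0<1-\sigma_c(\xi)<1 .
\end{equation*}
It follows that $|\partial_\xi S_c(\xi)|\geq |c|>0$ wherever $\sigma_c(\xi)>0$.

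Next, since $\partial_\xi S_c(\xi)\to 0$ as $\xi\to-\infty$, there is $\xi_0\in\mathbb R$ such that $|\partial_\xi S_c(\xi)|<|c|/2$ for all $\xi<\xi_0$. Here I use that $\partial_\xi S_c$ is continuous across discontinuities of $\sigma_c$ by \eqref{eq:jump-1}, so pointwise values of $\partial_\xi S_c$ are unambiguous and the limit statement applies everywhere, not only almost everywhere. Combining this with the previous step, no $\xi<\xi_0$ can satisfy $\sigma_c(\xi)>0$. Since $\sigma_c\in[0,1]$, we conclude $\sigma_c(\xi)=0$ for all $\xi<\xi_0$, which is \eqref{tw:vacuum-001}.

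The only delicate point is justifying that \eqref{eq:tw-03} holds pointwise, including as one-sided limits at jumps, rather than merely in a weak sense. This is needed so that the pointwise lower bound $|\partial_\xi S_c|\ge |c|$ is available at every point of the active set. I would handle it as follows. For piecewise smooth profiles, the integrated flux $-c\sigma_c+\sigma_c(1-\sigma_c)\partial_\xi S_c$ is constant on each smooth piece. Across jumps it is continuous by the Rankine--Hugoniot condition \eqref{eq:jump-2}, as already noted in \eqref{tw-jump-002}. It is therefore identically zero, and the argument above applies to all one-sided values.
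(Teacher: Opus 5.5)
Your proof is correct and uses the same idea as the paper's: on the active set, the zero-flux relation \eqref{eq:tw-03} forces $(1-\sigma_c)\partial_\xi S_c = c \neq 0$, which is incompatible with $\partial_\xi S_c \to 0$ as $\xi \to -\infty$. Your pointwise bound $|\partial_\xi S_c| \geq |c|$ wherever $\sigma_c > 0$ is in fact more careful than the paper, which negates the lemma as ``$\sigma_c > 0$ on a whole half-line $(-\infty,\xi_1)$'' and so omits active sets that are unbounded below but interspersed with vacuum intervals, a case your argument covers directly.
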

\begin{proof}
Otherwise, there exists $ \xi_1 \in \mathbb R $ such that 
\begin{equation}
    \label{tw:vacuum-002}
    \sigma_c(\xi) \in (0,1] \qquad \forall \ \xi < \xi_1. 
\end{equation}
Then \eqref{eq:tw-03} implies that 
\begin{equation}
    \label{tw:vacuum-003}
    - c + (1-\sigma_c(\xi))\partial_\xi S_c(\xi) = 0 \qquad \forall \ \xi < \xi_1. 
\end{equation}
Taking the limit $ \xi \to -\infty $ in \eqref{tw:vacuum-003} implies that 
\begin{equation}
    c = 0, 
\end{equation}
thanks to \eqref{eq:tw-left-vacuum}, leading to contradiction. 
\end{proof}

\subsection{Right-going traveling wave solutions, i.e., $ c > 0 $}
\label{subsec:right-going-traveling-wave}

\subsubsection{Right-going continuous traveling wave}
\label{subsubsec:rg-continous-tw}

This section focuses on constructing the class of right-going continuous traveling wave solution to \eqref{sys:hyperbolicKS}. In particular, we will show the following proposition:

\begin{proposition}[Right-going continuous traveling wave]
\label{prop:tw-cnt}
Let \(c>0\). Then there exists a continuous global traveling-wave profile of \eqref{sys:hyperbolicKS}, satisfying \eqref{sys:tw}--\eqref{eq:tw-left-vacuum}.
After translation, the profile is given on the left by
\[
\sigma_c(\xi)=0,
\qquad
S_c(\xi)=ce^\xi,
\qquad
\xi\le0.
\]
For \(\xi>0\), the pair \((S_c,Y_c := \partial_\xi S_c)\),
solves the initial value problem
\begin{equation}
\begin{cases}
\partial_\xi S_c=Y_c,\\[4pt]
\partial_\xi Y_c=S_c-1+\dfrac{c}{Y_c},
\end{cases}
\qquad
S_c(0)=c,
\qquad
Y_c(0)=c,
\label{eq:tw-system}
\end{equation}
and
\begin{equation}
\sigma_c(\xi)=1-\frac{c}{Y_c(\xi)}.
\label{eq:tw-sigma}
\end{equation}
Moreover,
\[
Y_c(\xi)>c
\qquad
\text{for all }\xi>0,
\]
and consequently,
\[
0<\sigma_c(\xi)<1
\qquad
\text{for all }\xi>0.
\]

Additionally,
\[
S_c(\xi)\to+\infty,
\qquad
Y_c(\xi)\to+\infty,
\qquad
\sigma_c(\xi)\to1^-
\qquad
\text{as }\xi\to+\infty.
\]
Thus the profile is a one-sided traveling semi-wave.
\end{proposition}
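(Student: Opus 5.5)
We construct the profile explicitly and then study the planar ODE \eqref{eq:tw-system}. By Lemma \ref{lm:no-asymptotic-vaccum}, after translation we may take $\sigma_c\equiv 0$ on $\xi\le 0$. There \eqref{eq:tw-02} reduces to $-S_c''+S_c=0$, and the vacuum condition \eqref{eq:tw-left-vacuum} forces $S_c=\alpha e^\xi$. In the active region $\xi>0$, the zero-flux relation \eqref{eq:tw-03} gives $(1-\sigma_c)Y_c=c$, which is \eqref{eq:tw-sigma}. Substituting this into \eqref{eq:tw-02}, i.e. $Y_c'=S_c-\sigma_c$, yields exactly the second equation of \eqref{eq:tw-system}.

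Continuity of $\sigma_c$ at $0$ requires $\sigma_c(0+)=0$, hence $Y_c(0)=c$. The condition \eqref{eq:jump-1} ($S_c$ and $\partial_\xi S_c$ continuous) then gives $\alpha=Y_c(0)=S_c(0)=c$. This determines the initial data $(c,c)$. Since the profile is continuous and piecewise smooth with no jumps, it is a weak solution and the entropy inequality holds with equality. At the junction, only matching of $S_c,Y_c$ is needed, and it has been arranged.

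The central step is the invariance $Y_c>c$ for $\xi>0$. At $\xi=0$ we have $Y_c'(0)=c-1+1=c>0$, so $Y_c>c$ on some interval $(0,\delta)$; moreover $S_c'=Y_c>0$ there, so $S_c>c$. Suppose $\xi_1>0$ is the first point with $Y_c(\xi_1)=c$. On $(0,\xi_1)$ we have $Y_c>c>0$, so $S_c$ is increasing and $S_c(\xi_1)>c$. Then
\[
Y_c'(\xi_1)=S_c(\xi_1)-1+1=S_c(\xi_1)>c>0.
\]
But $Y_c$ reaching $c$ from above at $\xi_1$ forces $Y_c'(\xi_1)\le 0$, a contradiction. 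Thus $Y_c>c$ as long as the solution exists. In particular $Y_c$ stays away from the singularity $Y=0$, and $0<\sigma_c<1$ on this interval.

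Global existence and growth I expect to be the main obstacle, and I would handle it by comparison. Since $c/Y_c\in(0,1)$, the right-hand side satisfies
\[
S_c-1\le Y_c'\le S_c,
\]
so the system is dominated by a linear one and cannot blow up in finite time. Hence the solution exists for all $\xi>0$.

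For the limits: because $Y_c>c$, we get $S_c\ge c+c\xi\to\infty$. Then $Y_c'\ge S_c-1\to\infty$, so $Y_c\to+\infty$, and consequently $\sigma_c=1-c/Y_c\to 1^-$. Finally, there is no equilibrium at $+\infty$, so the profile is a one-sided traveling semi-wave.
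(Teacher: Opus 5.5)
Your proof is correct and follows the paper's overall structure: vacuum on the left, then $S_c=ce^\xi$ forced by continuity of $\sigma_c$ at $0$ together with \eqref{eq:jump-1}, then reduction to \eqref{eq:tw-system}, invariance of $\{Y_c>c\}$, a linear growth bound giving global existence, and finally the limits.

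The two analytic steps are done differently.

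\textbf{Invariance of $Y_c>c$.} The paper differentiates the second equation to get $\partial_\xi(\partial_\xi Y_c)+\frac{c}{Y_c^2}\partial_\xi Y_c=Y_c>0$. An integrating factor then gives $\partial_\xi Y_c>ce^{-\xi/c}>0$ and $Y_c>c+c^2(1-e^{-\xi/c})$. The paper closes with a continuity argument on the interval where $\sigma_c>0$. Your first-touching argument is shorter and uses only the ODE itself: at a first return to the level $Y_c=c$ one would have $\partial_\xi Y_c=S_c>c>0$, which is impossible.

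The paper's route does yield more, namely strict monotonicity of $Y_c$ and hence of $\sigma_c$, which is used later in Proposition \ref{prop:rg-jump-tw}. It also gives a quantitative lower bound on $\sigma_c$. Your barrier idea recovers monotonicity just as easily if you need it: at a first zero of $\partial_\xi Y_c$ one has $\partial_\xi^2 Y_c=Y_c>0$.

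\textbf{Asymptotics.} Your bound $S_c\ge c+c\xi$ is enough for the stated limits. The paper's Lemma \ref{lm:asymptotic-S-Y-cnt} instead works with the combinations $S_c+Y_c-1$ and $S_c-1-Y_c$. This gives the sharper two-sided exponential bounds $c_1e^\xi\le S_c,\,Y_c\le 2ce^\xi$ and a rate for $\sigma_c\to1^-$.
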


\begin{figure}[htbp]
\centering
\begin{tikzpicture}
\begin{axis} [
    xlabel={$\xi = x - ct$},
    ylabel={$\sigma_c(\xi)$},
    xmin=-2, xmax=5,
    ymin=-0.2, ymax=1.2,
    samples=200,
    grid=both,
    grid style={dashed, gray!30},
    width=12cm,
    height=4cm,
    legend pos=north west
]

\addplot[domain=-2:0, blue, ultra thick] {0};
\addplot[domain=0:6, blue, ultra thick] { 1 - 1 / (exp(x))};
\addlegendentry{$\sigma_c(\xi)$}



\end{axis}
\end{tikzpicture}
\caption{Sample of traveling wave profile $\sigma_c(\xi), \ c > 0 $, from Proposition \ref{prop:tw-cnt}}
\label{fig:sigma_c_traveling_rg-cnt}
\end{figure}

To show proposition \ref{prop:tw-cnt},
thanks to lemma \ref{lm:no-asymptotic-vaccum}, one has \eqref{tw:vacuum-001}. Without loss of generality, we assume that $  \xi_0 = 0 $ and it is the largest number such that $ \sigma_c $ is vaccum on the left. That is,
\begin{equation}
\label{cnt-tw-001}
0 = \sup\lbrace \xi_0 \vert \sigma_c(\xi) \equiv 0, \, \forall \ \xi < \xi_0 \rbrace. 
\end{equation}

Therefore \eqref{eq:tw-02} implies
\begin{equation}\label{cnt-tw-002}
-\partial_{\xi\xi}S_c+S_c=0 \qquad \forall \ \xi < 0.
\end{equation}
Thanks to \eqref{eq:tw-left-vacuum}, one can solve from \eqref{cnt-tw-002} that 
\begin{equation}\label{cnt-tw-003}
S_c(\xi)=Ae^\xi,
\qquad
\partial_\xi S_c(\xi)=Ae^\xi \qquad \forall \ \xi < 0,
\end{equation}
for some $ A \in \mathbb R $. 

Meanwhile, there exists $ \xi_1 > 0 $ such that 
\begin{equation}
    \label{cnt-tw-004}
    \sigma_c(\xi) > 0 \qquad \forall \ 0 < \xi < \xi_1,
\end{equation}
and therefore from \eqref{eq:tw-03} implies that
\begin{equation}
    \label{cnt-tw-005}
    -c + (1-\sigma_c) \partial_\xi S_c = 0 \qquad \forall \ 0 < \xi < \xi_1.
\end{equation}
In particular, \eqref{cnt-tw-005} implies that
\begin{equation}
    \label{cnt-tw-006}
    \partial_\xi S_c(0) = c,
\end{equation}
since $ \sigma_c $ is continuous at $ \xi = 0 $. Together with \eqref{cnt-tw-003}, one concludes that 
\begin{equation}
    \label{cnt-tw-007}
    S_c(\xi) = c e^\xi \qquad \forall \ \xi < 0,
\end{equation}
and in particular,
\begin{equation}
\label{cnt-tw-008}
S_c(0)=c,
\qquad
\partial_\xi S_c(0)=c.
\end{equation}

We now construct the active branch for \(\xi>0\) and show that $ \xi_1 = \infty $. The positivity of
\(\sigma_c\) cannot be deduced solely from continuity at the interface. Instead,
it must follow from the reduced ODE dynamics.

Define
\begin{equation}\label{def:Y-c}
Y_c:=\partial_\xi S_c.
\end{equation}
On any active interval where
$
\sigma_c>0,
$
the zero-flux relation \eqref{eq:tw-03}
reduces to
\[
-c+(1-\sigma_c)Y_c=0.
\]
or, equivalently, 
\begin{equation}
\sigma_c=1-\frac{c}{Y_c}.
\label{eq:tw-sigma-from-Y}
\end{equation}

Substituting \eqref{eq:tw-sigma-from-Y} into \eqref{eq:tw-02} yields
\[
-\partial_\xi Y_c+S_c
=
1-\frac{c}{Y_c}.
\]
Hence we have obtained the following first-order system:
\begin{subequations}\label{sys:S-Y}
\begin{align}
\partial_\xi S_c&=Y_c,\label{sys:S-Y-1} \\
\partial_\xi Y_c&=S_c-1+\dfrac{c}{Y_c}, \label{sys:S-Y-2}
\end{align}
\end{subequations}
and thanks to \eqref{cnt-tw-008}
\begin{equation}
    \label{cnt-S-Y-initial}
    S_c(0)=c > 0,
    \qquad
    Y_c(0)=c > 0.
\end{equation}
Then the Picard existence theory applies, and there is a
unique local solution to \eqref{sys:S-Y}--\eqref{cnt-S-Y-initial}. In the following lemma, we show that this solution can be extended to all $ \xi \in (0,\infty) $:

\begin{lemma}\label{lm:S-Y-cnt-global}
    The solution to system \eqref{sys:S-Y}--\eqref{cnt-S-Y-initial} exists and $ Y_c > c $ for all $ \xi > 0 $. In particular, both $ S_c $ and $ Y_c $ are monotonically increasing. Therefore thanks to \eqref{eq:tw-sigma-from-Y}, $ \sigma_c > 0 $ for all $ \xi> 0 $ and $ \xi_1 = \infty $. 
\end{lemma}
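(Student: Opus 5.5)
The plan is to work on the maximal interval of existence $[0,\xi^*)$ of the local solution to \eqref{sys:S-Y}--\eqref{cnt-S-Y-initial}, on which $Y_c>0$ (the right-hand side of \eqref{sys:S-Y-2} is smooth only where $Y_c\neq 0$). I will first prove strict monotonicity of $Y_c$ on this interval, deduce $Y_c>c$ from it, and then use the resulting a priori bounds to show $\xi^*=\infty$.

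\emph{Step 1: $\partial_\xi Y_c>0$ on $[0,\xi^*)$.} At $\xi=0$, \eqref{sys:S-Y-2} and \eqref{cnt-S-Y-initial} give
\[
\partial_\xi Y_c(0)=c-1+\frac{c}{c}=c>0 .
\]
Since the solution is smooth wherever $Y_c>0$, I can differentiate \eqref{sys:S-Y-2} and use \eqref{sys:S-Y-1}:
\[
\partial_{\xi\xi} Y_c=Y_c-\frac{c}{Y_c^2}\,\partial_\xi Y_c .
\]
Suppose $\partial_\xi Y_c$ vanished somewhere, and let $\bar\xi>0$ be its first zero. On $[0,\bar\xi)$ we have $Y_c$ increasing, so $Y_c(\bar\xi)>c>0$. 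Hence $\partial_{\xi\xi}Y_c(\bar\xi)=Y_c(\bar\xi)>0$. This means $\partial_\xi Y_c<0$ immediately to the left of $\bar\xi$, which contradicts $\bar\xi$ being the first zero. Therefore $\partial_\xi Y_c>0$ on $[0,\xi^*)$.

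\emph{Step 2: consequences of Step 1.} From Step 1, $Y_c(\xi)>Y_c(0)=c$ for $\xi>0$. By \eqref{sys:S-Y-1}, $\partial_\xi S_c=Y_c>0$, so $S_c$ is also strictly increasing. Finally, \eqref{eq:tw-sigma-from-Y} gives $\sigma_c=1-c/Y_c\in(0,1)$.

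\emph{Step 3: global existence, $\xi^*=\infty$.} Since $Y_c>c$, the solution stays bounded away from the singular set $\{Y=0\}$, and $0<c/Y_c<1$. The system therefore satisfies the linear bounds
\[
0<\partial_\xi S_c=Y_c, \qquad \partial_\xi Y_c\le S_c .
\]
Gronwall's inequality applied to $S_c+Y_c$ then gives at most exponential growth, so the solution cannot blow up in finite time. Together with the distance from $\{Y=0\}$, standard ODE continuation yields $\xi^*=\infty$, and hence $\xi_1=\infty$.

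The main obstacle I expect is Step 1. A direct sign check of $S_c-1+c/Y_c$ fails when $c<1$, because $S_c-1$ may be negative. This is why I do the comparison on the derivative $\partial_\xi Y_c$ through the second-derivative identity rather than on $Y_c$ itself.
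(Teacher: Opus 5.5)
Your proof is correct and follows essentially the same route as the paper. Both arguments differentiate \eqref{sys:S-Y-2} to get $\partial_{\xi\xi}Y_c+\frac{c}{Y_c^2}\partial_\xi Y_c=Y_c>0$, deduce $\partial_\xi Y_c>0$ and hence $Y_c>c$, and then use $\partial_\xi(S_c+Y_c)\le S_c+Y_c$ with a continuation argument. The differences are minor: the paper integrates with an integrating factor (getting the quantitative bound $\partial_\xi Y_c>ce^{-\xi/c}$) inside a bootstrap that assumes $Y_c>c$ on $(0,\xi_1)$, while you use a first-zero argument on the maximal interval where $Y_c>0$, which makes the continuation step more explicit.
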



\begin{proof}

Since $ \sigma_c > 0 $ for $ 0 < \xi < \xi_1 $, we have, thanks to \eqref{eq:tw-sigma-from-Y}, 
\begin{equation}
    \label{cnt-tw-009}
    Y_c(\xi) > c > 0 \qquad \forall \ 0 < \xi < \xi_1. 
\end{equation}
From \eqref{sys:S-Y}, one then has
\begin{equation}
    \label{cnt-tw-010}
    \partial_\xi S_c = Y_c > c > 0,
\end{equation}
and in particular $ S_c $ is strictly monotonically increasing and 
\begin{equation}
\label{cnt-tw-010-1}
    S_c > c.
\end{equation}
Thus for $ \forall \ 0 <  \xi < \xi_1 $, thanks to \eqref{cnt-tw-009}, taking $ \partial_\xi $ in \eqref{sys:S-Y-2} yields
\begin{equation}
    \label{cnt-tw-012}
    \partial_\xi(\partial_\xi Y_c) + \frac{c}{Y_c^2} \partial_\xi Y_c = Y_c > 0 \qquad \text{and} \ \partial_\xi Y_c(0) = S_c(0) - 1 + \frac{c}{Y_c(0)} = c > 0.
\end{equation}
Hence 
\begin{equation}
    \label{cnt-tw-013}
    \partial_\xi Y_c > \partial_\xi Y_c(0) e^{-\int_0^\xi \frac{c}{Y_c^2(\xi')}\,d\xi'} > ce^{-\xi/c} > 0 \qquad \forall \ 0 < \xi < \xi_1,
\end{equation}
and thus $ \forall \ 0 < \xi < \xi_1 $,
\begin{gather}
    \label{cnt-tw-013}
    Y_c > Y_c(0) + \int_0^\xi c e^{-\xi'/c}\,d\xi' = c + c^2 (1-e^{-\xi/c}) > c, \\ 
    \label{cnt-tw-014}
    \sigma_c = 1- \frac{c}{Y_c}> 1 - \frac{c}{c + c^2(1-e^{-\xi/c})} = \frac{c(1-e^{-\xi/c})}{1 + c(1-e^{-\xi/c})} > 0.
\end{gather}

On the other hand, from \eqref{sys:S-Y} and \eqref{cnt-tw-013}, one has that
\begin{equation}
    \label{cnt-tw-014}
    \partial_\xi (S_c + Y_c ) = (S_c + Y_c - 1) + \frac{c}{Y_c} \leq S_c + Y_c. 
\end{equation}
Hence $ \forall \ 0 < \xi < \xi_1 $, thanks to \eqref{cnt-tw-010-1}, \eqref{cnt-tw-013}, and \eqref{cnt-tw-014},
\begin{equation}
    \label{cnt-tw-015}
    c < S_c,\ Y_c \leq S_c + Y_c \leq 2 c e^\xi.
\end{equation}
Therefore, with a continuity argument, we finish the proof of the lemma. 
\end{proof}

We now determine the asymptotic behavior of $ (S_c, Y_c) $ as \(\xi\to+\infty\). 
\begin{lemma}
    \label{lm:asymptotic-S-Y-cnt} There exists $ 0 < c_1 < 2c $, such that $ \forall \ \xi > 0 $, 
    \begin{equation}
        \label{cnt-tw-022}
        c_1 e^\xi  \leq S_c (\xi),\ Y_c(\xi) \leq 2c e^\xi,
    \end{equation}
    and thus
    \begin{equation}
        \label{cnt-tw-023}
        1-\frac{c}{c_1 e^\xi}\leq \sigma_c(\xi) \leq 1- \frac{1}{2 e^\xi}.
    \end{equation}
\end{lemma}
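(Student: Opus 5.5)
The plan is to upgrade the crude bounds from Lemma \ref{lm:S-Y-cnt-global} (namely $c<S_c,Y_c\le S_c+Y_c\le 2ce^\xi$) to a matching exponential lower bound. I will work with the sum $W:=S_c+Y_c$ and the difference $Z:=Y_c-S_c$. The upper bound in \eqref{cnt-tw-022} is already \eqref{cnt-tw-015}, since both $S_c$ and $Y_c$ are positive and bounded by their sum. The upper bound in \eqref{cnt-tw-023} then follows from \eqref{eq:tw-sigma-from-Y}:
\[
\sigma_c=1-\frac{c}{Y_c}\le 1-\frac{c}{2ce^\xi}=1-\frac{1}{2e^\xi}.
\]
Likewise, the lower bound in \eqref{cnt-tw-023} follows immediately once we know $Y_c\ge c_1e^\xi$. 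So the whole task is the lower bound $S_c,Y_c\ge c_1e^\xi$.

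\textbf{Step 1 (exponential growth of the sum).} From \eqref{sys:S-Y},
\[
\partial_\xi W=W-1+\frac{c}{Y_c}\ge W-1 .
\]
Hence $(W-1)e^{-\xi}$ is nondecreasing. Since $\partial_\xi S_c=Y_c>c$, we have $S_c\ge c+c\xi$, so there is a $\xi_*>0$ with $W(\xi_*)>1$. Setting
\[
\delta:=(W(\xi_*)-1)e^{-\xi_*}>0,
\]
we obtain $W-1\ge\delta e^\xi$ for all $\xi\ge\xi_*$.

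\textbf{Step 2 (controlling the difference).} Here
\[
\partial_\xi Z=Y_c-S_c+1-\frac{c}{Y_c}-2Y_c+S_c\,,
\]
which is easier to read as
\[
\partial_\xi Z=\partial_\xi Y_c-\partial_\xi S_c=S_c-1+\frac{c}{Y_c}-Y_c=-Z-\Bigl(1-\frac{c}{Y_c}\Bigr),\qquad Z(0)=0 .
\]
Since $Y_c>c$, the forcing term $1-c/Y_c$ lies in $(0,1)$. Duhamel's formula then gives
\[
Z(\xi)=-\int_0^\xi e^{-(\xi-s)}\Bigl(1-\frac{c}{Y_c(s)}\Bigr)\,ds\in(-1,0),
\]
so $Y_c<S_c<Y_c+1$. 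Consequently, for $\xi\ge\xi_*$,
\[
Y_c=\frac{W+Z}{2}\ge\frac{W-1}{2}\ge\frac{\delta}{2}e^\xi,\qquad S_c>Y_c\ge\frac{\delta}{2}e^\xi .
\]

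\textbf{Step 3 (compact part and conclusion).} On $[0,\xi_*]$ we use $S_c,Y_c>c\ge ce^{-\xi_*}e^\xi$. Taking
\[
c_1:=\min\Bigl\{\frac{\delta}{2},\,ce^{-\xi_*}\Bigr\},
\]
and shrinking it further if needed so that $c_1<2c$, we get $c_1e^\xi\le S_c,Y_c$ for all $\xi>0$. Together with the remarks in the first paragraph, this yields both \eqref{cnt-tw-022} and \eqref{cnt-tw-023}.

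The only delicate point is obtaining the sharp rate $e^\xi$ rather than a weaker exponent. A naive estimate such as $\partial_{\xi\xi}S_c\ge S_c-1$ only yields a sub-optimal rate unless it is handled carefully. The sum/difference decomposition avoids this issue: the sum carries the exact rate $e^\xi$, and the difference stays uniformly bounded.
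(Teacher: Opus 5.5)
Your proof is correct and follows essentially the same route as the paper: exponential growth of $S_c+Y_c-1$ from $\partial_\xi(S_c+Y_c-1)\ge S_c+Y_c-1$, plus a Duhamel bound on the difference. The paper only gets $|S_c-1-Y_c|\le 2$, while you get the sharper $-1<Y_c-S_c<0$, and you also make explicit the choice of $\xi_*$ and the handling of the compact interval $[0,\xi_*]$, both of which the paper leaves implicit. One small slip: your first displayed formula for $\partial_\xi Z$ is garbled (it simplifies to $1-c/Y_c-Y_c$), but the corrected line right after it is right and is the only one you use.
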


\begin{proof}
Since $ \xi_1 = \infty $, \eqref{cnt-tw-010} and \eqref{cnt-tw-012} hold for all $ \xi > 0 $. This implies that 
\begin{equation}
    \label{cnt-tw-016}
    S_c, Y_c \rightarrow \infty \qquad \text{as} \quad \xi \to \infty,
\end{equation}
and thanks to \eqref{eq:tw-sigma-from-Y},
\begin{equation}
    \label{cnt-tw-017}
    \sigma_c \rightarrow 1^- \qquad \text{as} \quad \xi \to \infty.
\end{equation}

Moreover, from \eqref{sys:S-Y}, one has that
\begin{gather}
\label{cnt-tw-018}
    \partial_\xi (S_c + Y_c - 1 ) = (S_c + Y_c - 1) + \frac{c}{Y_c} \geq S_c + Y_c - 1,\\
    \intertext{and}
    \label{cnt-tw-019}
    \partial_\xi(S_c- 1 - Y_c) = - (S_c- 1 - Y_c) - \frac{c}{Y_c}.
\end{gather}
Hence, one can calculate that for $ \forall \ \xi_2 \geq 0 $
\begin{gather}
\label{cnt-tw-020}
    S_c + Y_c - 1 \geq e^{\xi - \xi_2} (S_c(\xi_2) + Y_c(\xi_2) - 1 ),  \\
    \intertext{and}
\label{cnt-tw-021}
\vert S_c - 1 - Y_c \vert = \vert - e^{-\xi} - \int_0^\xi \frac{c e^{\xi'-\xi}}{Y_c(\xi')}\,d\xi' \vert \leq 2.
\end{gather}
In particular, thanks to \eqref{cnt-tw-016}, there exists $ \xi_2 $ such that the right hand side of \eqref{cnt-tw-020} is positive, and therefore, this verifies \eqref{cnt-tw-022}. Together with \eqref{eq:tw-sigma-from-Y}, this finishes the proof. 
\end{proof}

\begin{proof}[Proof of Proposition \ref{prop:tw-cnt}]
    The proof of Proposition \ref{prop:tw-cnt} follows from Lemmas \ref{lm:S-Y-cnt-global} and \ref{lm:asymptotic-S-Y-cnt}. In particular, it is easy to verify $(S_c, \sigma_c)$ is a continuous traveling wave profile for system \eqref{sys:hyperbolicKS}.
\end{proof}

\subsubsection{Right-going traveling wave with jump}
\label{subsubsec:rg-jump-tw}

We will construct the class of right-going traveling wave ($ c > 0 $) with jump based on Propositions \ref{prop:no-jump-tw} and \ref{prop:tw-cnt}. Indeed, thanks to Proposition \ref{prop:no-jump-tw} and Lemma \ref{lm:no-asymptotic-vaccum}, there are $\xi_0, \xi_1 \in \mathbb R$, $ \xi_0 < \xi_1 $,  such that
\begin{equation}
    \label{jm-tw-001}
    \sigma_c = 0 \quad \text{for} \ \forall \xi \leq \xi_0 \qquad \text{and} \qquad \sigma_c > 0 \quad \text{for} \ \forall \ \xi_0 < \xi < \xi_1,
\end{equation}
and $ \xi= \xi_0 $ is a continuous point of $ \sigma_c $. Without loss of generality, we assume that $ \xi_0 = 0 $ after translation. Then, $ (\sigma_c,S_c) $ is given by Proposition \ref{prop:tw-cnt} for $ \xi < \xi_1 $. 

Let $ \xi = \xi_1 $ be a jump. Proposition \ref{prop:no-jump-tw} implies that $ \sigma_c(\xi_1 +) = 0 $. In addition, Proposition \ref{prop:tw-cnt} yields that $ S_c(\xi_1) > c $ and $ \partial_\xi S_c(\xi_1) = Y_c(\xi_1) > c $. Let $ \xi_2 $, $ \xi_1 \leq \xi_2 \leq \infty $, be the continuous point such that 
\begin{equation}
    \label{jm-tw-002}
    \sigma_c = 0 \quad \forall \ \xi_1 < \xi < \xi_2, \quad \text{and} \quad \sigma_c(\xi) > 0 \quad \xi \in (\xi_2,\xi_2 + \delta) \quad \text{for some} \ \delta > 0.  
\end{equation}
In particular, \eqref{eq:tw-03} implies
\begin{equation}
    \label{jm-tw-002-2}
    -c + (1-\sigma_c) \partial_\xi S_c = 0 \quad \forall \ \xi_2 < \xi < \xi_2 + \delta.
\end{equation}
Then following argument as in Proposition \ref{prop:tw-cnt}, one can derive that, for $ \xi_1 < \xi < \xi_2 $
\begin{subequations}
    \label{jm-tw-003}
    \begin{align}
        \partial_\xi S_c & = Y_c >0, \\
        \partial_\xi Y_c & = S_c >0.
    \end{align}
\end{subequations}
Therefore, one can conclude that
\begin{equation}
    \label{jm-tw-004}
    S_c(\xi_2) > c \quad \text{and} \quad \partial_\xi S_c(\xi_2) = Y_c(\xi_2) > c.
\end{equation}
This leads to a contradiction since, thanks to \eqref{jm-tw-002-2},
\begin{equation}
    \label{jm-tw-004-1}
    0 = \sigma_c(\xi_2+) = \lim_{\xi \to \xi_2+} \lbrace 1-\frac{c}{\partial_\xi S_c}\rbrace > 0. 
\end{equation}
Therefore, $ \xi_2 = \infty $ and 
\begin{equation}
    \label{jm-tw-005}
    \sigma_c = 0 \qquad \text{for all} \ \forall \ \xi > \xi_1,
\end{equation}
i.e., $ \xi_1 $ is the only possible jump. In particular, solving \eqref{jm-tw-003} with \eqref{jm-tw-004} leads to 
the same asymptotic behavior of $ S_c, Y_c $ as in \eqref{cnt-tw-022}. 

We have shown the following proposition:
\begin{proposition}[Right-going discontinuous traveling wave]\label{prop:rg-jump-tw} Let $ c > 0 $. For arbitrary $ \xi_0, \xi_1 \in \mathbb R $ and $ \xi_0 < \xi_1 $. There exists a traveling wave solution $ (\sigma_c, S_c) $ to system \eqref{sys:tw} such that 
\begin{equation}
    \label{tw-discontinuous}
    \sigma_c = 0 \qquad \text{for} \quad \xi \leq \xi_0 \quad \text{and} \quad \xi > \xi_1,
\end{equation}
and $ \sigma_c $ is continuous and monotonically increasing for $ \xi \in [\xi_0,\xi_1) $. In addition, $ \xi = \xi_0 $ is a continuous point and $ \xi = \xi_1 $ is the only jump for $ \sigma_c $. The profile for $(\sigma_c,S_c)$ is solved by \eqref{sys:S-Y} for $ \xi \in (\xi_0,\xi_1) $ and \eqref{jm-tw-003} for $ \xi \in (\xi_1,\infty) $. 
\end{proposition}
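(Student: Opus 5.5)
This is an existence statement, so I will construct the profile explicitly and then verify it against \eqref{sys:tw}, the zero-flux relation \eqref{eq:tw-03}, and the jump conditions of Proposition \ref{prop:jump-entropy}.

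\textbf{Active branch.} By translation invariance I take $\xi_0=0$. On $\xi\le 0$ I set $\sigma_c=0$ and $S_c=ce^\xi$. On $(0,\xi_1)$ I take $(S_c,Y_c)$ to be the solution of \eqref{sys:S-Y}--\eqref{cnt-S-Y-initial} and set $\sigma_c=1-c/Y_c$. Lemma \ref{lm:S-Y-cnt-global} gives that this solution exists on all of $(0,\infty)$, in particular up to any prescribed $\xi_1$. The same lemma gives $Y_c>c$ and that $Y_c$ is increasing. Hence $\sigma_c$ is continuous, increasing, and lies in $(0,1)$ there. Continuity at $\xi=0$ holds because $Y_c(0)=c$.

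\textbf{Vacuum branch.} On $(\xi_1,\infty)$ I set $\sigma_c=0$ and solve $S_c''=S_c$, i.e.\ \eqref{jm-tw-003}, with initial data $S_c(\xi_1)$ and $Y_c(\xi_1)$ matching the left limits. This enforces $[[S]]=[[\partial_\xi S]]=0$, which is \eqref{eq:jump-1}. The explicit solution is $S_c=\alpha e^{\xi-\xi_1}+\beta e^{-(\xi-\xi_1)}$ with $\alpha=(S+Y)/2>0$ and $\beta=(S-Y)/2$. It stays positive with positive derivative, giving exponential growth as in \eqref{cnt-tw-022}.

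\textbf{Jump conditions at $\xi_1$.} Write $\sigma^-=\sigma_c(\xi_1-)=1-c/Y_c(\xi_1)\in(0,1)$ and $\sigma^+=0$.
\begin{itemize}
\item The Rankine--Hugoniot relation \eqref{eq:jump-2} with $x'(t)=c$ reads $c=Y_c(\xi_1)(1-\sigma^-)$, which is exactly the defining relation of $\sigma^-$.
\item Since $\sigma^+<\sigma^-$, the entropy condition \eqref{eq:jump-3} requires $\partial_\xi S_c(\xi_1)\ge0$. This holds because $Y_c(\xi_1)>c>0$.
\end{itemize}

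\textbf{Weak solution and entropy inequality.} Away from $\xi_1$ the profile is smooth, or Lipschitz at $\xi_0$. I will check that it solves \eqref{eq:tw-01}--\eqref{eq:tw-02} classically, since the zero-flux relation \eqref{eq:tw-03} holds in both the active and the vacuum regions. Smooth solutions satisfy the entropy equality \eqref{eq:entropy_equality}. So the distributional entropy inequality \eqref{eq:entropy_inequality} reduces to the interface condition \eqref{eq:RH-3}.

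\textbf{Main obstacle.} The delicate point is that \eqref{eq:jump-3} was derived only from Kru\v{z}kov entropies. To pass to general convex $\eta$, I will use the Kru\v{z}kov inequality \eqref{eq:jump-4}--\eqref{eq:jump-5} for every $\sigma_0\in[0,1]$. I then represent a convex $\eta$ as an affine function plus a superposition of $|\sigma-\sigma_0|$ with nonnegative weight $\eta''$. The affine part contributes an equality by \eqref{eq:RH-1}, and the weak-derivative argument at the Lipschitz point $\xi_0$ is routine.
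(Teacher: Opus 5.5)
Your construction and Rankine--Hugoniot check are correct. They prove the proposition as literally stated, namely existence of a weak solution of \eqref{sys:tw} with the described structure.

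\textbf{Comparison with the paper's route.} The paper argues by necessity rather than by construction. It starts from an arbitrary right-going profile with a jump and uses Proposition~\ref{prop:no-jump-tw} to force $\sigma_c(\xi_1+)=0$. It then rules out re-entry into an active region after $\xi_1$. On the vacuum branch $S_c$ and $Y_c$ stay above $c$, so a continuous re-entry at $\xi_2$ would give $\sigma_c(\xi_2+)=1-c/Y_c(\xi_2)>0$, and an upward jump is excluded for $c>0$. The paper then reads off the profile and leaves the weak and entropy formulations implicit. Your version makes ``$\xi_1$ is the only jump'' true by construction and supplies the verification the paper skips. The paper's version additionally claims rigidity: every right-going discontinuous wave has this shape, relative to \eqref{eq:jump-3}. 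Your construction does not address that.

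\textbf{The entropy step fails.} The step you call the main obstacle cannot be completed, and the profile is not an entropy solution. The problem is that \eqref{eq:RH-3} has the wrong sign. Across $x=x(t)$, the singular part of $\dt\eta(\sigma)+\dx(\dx S\,q(\sigma))$ is $\bigl(-x'(t)[[\eta]]+\dx S\,[[q]]\bigr)\,\delta(x-x(t))$. So \eqref{eq:entropy_inequality} requires $x'[[\eta]]-\dx S\,[[q]]\ge 0$. As a sanity check, the admissible Burgers shock $u^-=1$, $u^+=0$ with $\eta=u^2/2$ gives $+1/12$.

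Redo the Kru\v{z}kov computation with this sign at your jump, where $\sigma^+=0<\sigma_0<\sigma^-$, $x'=c$ and $Y:=Y_c(\xi_1)$:
\[
c\,[[\eta_{\sigma_0}]]-Y\,[[q_{\sigma_0}]]=-2Y\sigma_0(\sigma^--\sigma_0)<0 .
\]
In general the admissibility condition is the Lax condition $\dx S\,(\sigma^+-\sigma^-)\ge0$, which is the reverse of \eqref{eq:jump-3}.

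A concrete smooth entropy shows the same failure. Take $\eta=\sigma^2$ and $q=\sigma^2-\tfrac43\sigma^3$. Then the left side of \eqref{eq:entropy_inequality} carries the positive singular part $\tfrac13 Y(\sigma^-)^3\,\delta(x-x(t))$. Equivalently, the characteristic speeds satisfy $Y(1-2\sigma^-)<c<Y$, so for the concave flux $Y\sigma(1-\sigma)$ this is an expansion shock.

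Your superposition of Kru\v{z}kov entropies, done with the correct sign, therefore proves the opposite of what you want. The error is inherited from \eqref{eq:RH-3}--\eqref{eq:jump-3}. You should either restrict your claim to weak solutions, which is all the proposition asserts, or flag the sign error instead of relying on \eqref{eq:jump-3}.
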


\begin{figure}[htbp]
\centering
\begin{tikzpicture}
\begin{axis} [
    xlabel={$\xi = x - ct$},
    ylabel={$\sigma_c(\xi)$},
    xmin=-2, xmax=5,
    ymin=-0.2, ymax=1.2,
    samples=200,
    grid=both,
    grid style={dashed, gray!30},
    width=12cm,
    height=4cm,
    legend pos=north west
]

\addplot[domain=-2:0, blue, ultra thick] {0};
\addplot[domain=0:3, blue, ultra thick] { 1 - 1 / (exp(x))};
\addplot[domain=3:6, blue, ultra thick] {0};

\addplot [blue, ultra thick, forget plot] coordinates {
    (3, {1 - exp(-3)}) 
    (3, 0) 
};

\addlegendentry{$\sigma_c(\xi)$}



\end{axis}
\end{tikzpicture}
\caption{Sample of traveling wave profile $\sigma_c(\xi), \ c > 0 $, from Proposition \ref{prop:rg-jump-tw}}
\label{fig:sigma_c_traveling_rg-jump}
\end{figure}

\subsection{Left-going traveling wave solutions}
\label{subsec:left-going-tw}

\subsubsection{Left-going continuous traveling wave, i.e., $ c < 0 $}
\label{subsubsec:lg-continous-tw}

We consider $ c < 0 $. Following the same arguments as in section \ref{subsubsec:rg-continous-tw}, without loss of generality, assume that $ \xi_0 = 0 $ is the point such that
\begin{equation}
    \label{cnt-tw-l:001}
    \sigma_c = 0 \quad \text{for} \ \xi \leq 0 \qquad \text{and} \ \sigma_c > 0 \quad \text{for} \ \xi \in (0,\xi_1) \quad \text{for some} \ \xi_1 > 0. 
\end{equation}
Then one has that 
\begin{gather}
    \label{cnt-tw-l:002}
    S_c(\xi) = c e^\xi \qquad \text{for} \ \xi \leq 0, \\
    \intertext{and for $ \xi \in (0,\xi_1) $,}
    \label{cnt-tw-l:003}
    \begin{cases}
        \partial_\xi S_c = Y_c, \\
        \partial_\xi Y_c = S_c - 1 + \frac{c}{Y_c},\\
        \sigma_c = 1- \frac{c}{Y_c},
    \end{cases}\\
    \intertext{with}
    \label{cnt-tw-l:004}
    Y_c(0) = S_c(0) = c < 0,\\
    \intertext{and since $ \sigma_c > 0 $, }
    \label{cnt-tw-l:005}
    Y_c < c<0 \qquad \text{for} \ \xi \in (0,\xi_1).
\end{gather}
Therefore, one has that for $ \xi \in (0,\xi_1) $, 
\begin{gather}
    \label{cnt-tw-l:006}
    \partial_\xi S_c < c < 0, \qquad \text{which imples that} \ S_c < S_c(0) = c <0, \\
    \intertext{and}
    \label{cnt-tw-l:007}
    \partial_\xi Y_c^2 = \underbrace{(S_c -1 )Y_c}_{> 0} + c \geq (c-1)c + c = c^2 > 0.
\end{gather}
In particular, \eqref{cnt-tw-l:005} and \eqref{cnt-tw-l:007} imply that for $ \xi \in (0,\xi_1) $, 
\begin{equation}
    \label{cnt-tw-l:008}
    Y_c < c \sqrt{1+\xi^2} < c < 0,
\end{equation}
which implies that 
\begin{equation}
    \label{cnt-tw-l:009}
    1 > \sigma_c(\xi) = 1-  \underbrace{\frac{c}{Y_c}}_{>0} > 1 - \frac{1}{\sqrt{1+\xi^2}} > 0.
\end{equation}
Similarly as before, one also has
\begin{gather}
    \label{cnt-tw-l:010}
    S_c + Y_c - 1< \partial_\xi(S_c + Y_c) = S_c + Y_c - 1 + \frac{c}{Y_c} = S_c + Y_c - \sigma_c < S_c + Y_c,\\
    \intertext{and}
    \label{cnt-tw-l:011}
    - (S_c - Y_c) <\partial_\xi(S_c - Y_c) = - (S_c - Y_c) + \sigma_c < - (S_c - Y_c) + 1.
\end{gather}
Solving \eqref{cnt-tw-l:010} and \eqref{cnt-tw-l:011} implies that, for some $ 0  < -2c < c_1 $ and for all $ \xi \in (0,\xi_1) $,
\begin{equation}
    \label{cnt-tw-l:012}
    -c_1 e^\xi < S_c,\ Y_c < 2c e^\xi < 0, \qquad \text{and} \qquad  1 - \frac{1}{2 e^\xi} < \sigma_c < 1 + \frac{c}{c_1 e^\xi}.
\end{equation}
Therefore, by a continuity argument, $ \xi_1 = \infty $ and we have proved the following proposition: 
\begin{proposition}[Left-going continuous traveling wave]
    \label{prop:lg-cnt-tw}
    Let \(c<0\). Then there exists a continuous global traveling-wave profile of \eqref{sys:hyperbolicKS}, satisfying \eqref{sys:tw}--\eqref{eq:tw-left-vacuum}.
After translation, the profile is given on the left by
\[
\sigma_c(\xi)=0,
\qquad
S_c(\xi)=ce^\xi,
\qquad
\xi\le0.
\]
For \(\xi>0\), the pair \((S_c,Y_c :=\partial_\xi S_c )\)
solves the initial value problem
\begin{equation}
\begin{cases}
\partial_\xi S_c=Y_c,\\[4pt]
\partial_\xi Y_c=S_c-1+\dfrac{c}{Y_c},
\end{cases}
\qquad
S_c(0)=c,
\qquad
Y_c(0)=c,
\label{eq:tw-l-system}
\end{equation}
and
\begin{equation}
\sigma_c(\xi)=1-\frac{c}{Y_c(\xi)}.
\label{eq:tw-l-sigma}
\end{equation}
Moreover,
\[
Y_c(\xi)<c<0
\qquad
\text{for all }\xi>0,
\]
and consequently,
\[
0<\sigma_c(\xi)<1
\qquad
\text{for all }\xi>0.
\]

Additionally,
\[
S_c(\xi)\to -\infty,
\qquad
Y_c(\xi)\to -\infty,
\qquad
\sigma_c(\xi)\to1^-
\qquad
\text{as }\xi\to+\infty.
\]
Thus the profile is a one-sided traveling semi-wave.
\end{proposition}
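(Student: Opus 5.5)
The plan mirrors the right-going case in Section~\ref{subsubsec:rg-continous-tw}, with the signs reversed.

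\textbf{Step 1: the vacuum branch.} By Lemma~\ref{lm:no-asymptotic-vaccum}, after a translation we may set $\xi_0=0$ as in \eqref{cnt-tw-l:001}. Then \eqref{eq:tw-02} with $\sigma_c=0$ and the far field \eqref{eq:tw-left-vacuum} give $S_c=Ae^{\xi}$ for $\xi\le 0$. On $(0,\xi_1)$, where $\sigma_c>0$, the zero-flux relation \eqref{eq:tw-03} gives $\sigma_c=1-c/Y_c$. Continuity of $\sigma_c$ at $0$ forces $Y_c(0)=c$, hence $A=c$. This yields \eqref{cnt-tw-l:002} and the initial data \eqref{cnt-tw-l:004}, i.e.\ $S_c(0)=Y_c(0)=c$.

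\textbf{Step 2: local existence.} Since $Y_c(0)=c\neq 0$, the vector field in \eqref{eq:tw-l-system} is smooth near the data, so Picard's theorem gives a unique local solution.

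\textbf{Step 3: the invariant region $Y_c<c$.} This is the key step, because it keeps $Y_c$ away from the singular line $Y_c=0$ and guarantees $\sigma_c\in(0,1)$. At $\xi=0$,
\begin{equation*}
\partial_\xi Y_c(0)=c-1+1=c<0,
\end{equation*}
so $Y_c<c$ immediately to the right of $0$. Next I would show that on any interval $(0,\xi_1)$ on which $Y_c<c$, the following hold.
\begin{itemize}
\item First, $\partial_\xi S_c=Y_c<c$, so $S_c<c<0$.
\item Second, $c/Y_c\in(0,1)$.
\item Hence $\partial_\xi Y_c=S_c-1+c/Y_c<c-1+1=c<0$.
\end{itemize}
Consequently $Y_c(\xi)<c+c\xi<c$, and $Y_c$ can never return to the value $c$. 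A continuity argument then shows the inequality persists on the whole existence interval. On that interval $0<c/Y_c<1$, which gives $0<\sigma_c<1$. This inequality $\partial_\xi Y_c<c$ is simpler than the estimate \eqref{cnt-tw-l:007} and is what I would rely on.

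\textbf{Step 4: global existence and bounds.} Write the system in the diagonal variables, using $\sigma_c=1-c/Y_c\in(0,1)$ as a bounded forcing:
\begin{equation*}
\partial_\xi(S_c+Y_c)=(S_c+Y_c)-\sigma_c,\qquad \partial_\xi(S_c-Y_c)=-(S_c-Y_c)+\sigma_c.
\end{equation*}
Duhamel's formula gives $|S_c-Y_c|\le 1$ and exponential two-sided bounds on $S_c+Y_c$ of the form \eqref{cnt-tw-l:012}. So $(S_c,Y_c)$ cannot blow up in finite $\xi$. Moreover $Y_c<c$ keeps the solution away from $Y_c=0$, so the solution extends to all $\xi>0$, i.e.\ $\xi_1=\infty$.

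\textbf{Step 5: asymptotics.} From Step 3, $Y_c\le c(1+\xi)\to-\infty$, and hence $S_c\to-\infty$ as well. Then $\sigma_c=1-c/Y_c\to 1^-$, since $c/Y_c\downarrow 0$ through positive values. Finally, checking that the glued profile is a continuous weak solution is routine: $S_c$ and $\partial_\xi S_c$ match at $0$, and $\sigma_c$ is continuous there, so no jump condition arises.

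The main obstacle is Step 3, namely ruling out that $Y_c$ returns to $c$, which would make $\sigma_c$ vanish, or reaches $0$, where the ODE is singular. The pointwise bound $\partial_\xi Y_c<c$ on the invariant region handles both at once.
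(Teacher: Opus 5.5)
Your proposal is correct and follows the same route as the paper. Both arguments use the vacuum branch with $A=c$ forced by continuity of $\sigma_c$ at $0$, the initial value problem \eqref{eq:tw-l-system}, the invariant region $Y_c<c$, and the diagonal variables $S_c\pm Y_c$, which give exponential bounds and a continuation argument to $\xi_1=\infty$.

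The one difference is in the invariant-region step. You control $Y_c$ through the pointwise bound $\partial_\xi Y_c<c$, which gives $Y_c<c(1+\xi)$, instead of the paper's estimate \eqref{cnt-tw-l:007} on $\partial_\xi Y_c^2$. Your version is simpler, and it explicitly places the solution in $\{Y_c<c\}$ from the start via $\partial_\xi Y_c(0)=c<0$, where the paper assumes this. It also directly implies the paper's bound \eqref{cnt-tw-l:008}, which the $\partial_\xi Y_c^2$ computation only yields in the weaker form $|Y_c|\ge |c|\sqrt{1+2\xi}$.
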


\subsubsection{Left-going traveling wave with jump}
\label{subsub:lg-jump-tw}
Thanks to Proposition \ref{prop:no-jump-tw}, the same type of discontinuous traveling wave as in Proposition \ref{prop:rg-jump-tw}, i.e., a jump after a continuous non-trivial profile, is impossible. However, another type of discontinuous traveling wave emerges, i.e., a jump {\bf before} a continuous non-trivial profile. Without loss of generality, let $ \xi_0 = 0 $ be the jump, and 
\begin{equation}
    \label{jump-tw-l:001}
    \sigma_c(\xi) = 0, \qquad S_c(\xi) = Ae^\xi,
\end{equation}
for $ \xi< 0 $.
Then the jump condition in Proposition \ref{prop:no-jump-tw} implies that
\begin{equation}
    \label{jump-tw-l:002}
    \sigma_c(0+) = 1 - \frac{c}{A} \in (0,1],
\end{equation}
which requires that 
\begin{equation}
    \label{jump-tw-l:003}
    A < c < 0.
\end{equation}
Then for $ \xi >0 $, the same arguments as in \eqref{cnt-tw-l:003}--\eqref{cnt-tw-l:012} apply with \eqref{cnt-tw-l:004} replaced by
\begin{equation}
    \label{jump-tw-l:004}
    Y_c(0) = S_c(0) = A < c < 0. 
\end{equation}

We have proved the following proposition:
\begin{proposition}[Left-going traveling wave with jump] 
    \label{prop:lg-jump-tw}
    Let $ c <0 $. For arbitrary $ A < c $, upon translation, there exists a global traveling wave profile with jump at $ \xi = 0 $ such that
    \begin{equation}
        \sigma_c = 0 \qquad \text{for} \ \xi <0,
    \end{equation}
    and $ (\sigma_c, S_c) $, $ \xi \geq 0 $, is the solution to \eqref{cnt-tw-l:003} with initial data \eqref{jump-tw-l:004}. The asymptotics as $ \xi \rightarrow \infty $ of $ (\sigma_c, S_c) $ is the same as the continuous traveling wave constructed in Proposition \ref{prop:lg-cnt-tw}.
\end{proposition}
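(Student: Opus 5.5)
We construct the profile explicitly and then check the two requirements: it solves \eqref{sys:tw} in the weak sense, and it satisfies the jump conditions of Proposition \ref{prop:jump-entropy} at $\xi=0$.

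\textbf{Vacuum region and the jump.} For $\xi<0$ we take $\sigma_c=0$. Then \eqref{eq:tw-02} together with the left vacuum condition \eqref{eq:tw-left-vacuum} forces $S_c=Ae^\xi$, so $S_c(0)=\partial_\xi S_c(0)=A$. By \eqref{eq:jump-1}, $S_c$ and $\partial_\xi S_c$ are continuous across $\xi=0$, so the data \eqref{jump-tw-l:004} are the correct initial data for the active branch. The zero-flux relation \eqref{eq:tw-03} on the right side gives $\sigma_c(0+)=1-c/A$. The condition $A<c<0$ gives $0<c/A<1$, hence $\sigma_c(0+)\in(0,1)$, so this is a genuine jump from vacuum. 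It remains to check \eqref{eq:jump-2} and \eqref{eq:jump-3}. With $\sigma^-=0$ and $\sigma^+=1-c/A$, the Rankine--Hugoniot relation reads $c=A(1-\sigma^+)=A\cdot c/A$, which holds. Since $\sigma^+>\sigma^-$, the entropy condition requires $\partial_\xi S_c(0)=A\le 0$, which holds. This is consistent with Proposition \ref{prop:no-jump-tw}, which requires $c<0$ for a jump from zero to non-zero.

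\textbf{Active branch for $\xi>0$.} We solve \eqref{cnt-tw-l:003} with the data $S_c(0)=Y_c(0)=A$. The Picard theorem gives a local solution, and we run the continuity argument of \eqref{cnt-tw-l:005}--\eqref{cnt-tw-l:012} on the maximal interval where $Y_c<c$. Three estimates carry over.
\begin{itemize}
\item From $\partial_\xi S_c=Y_c<c$ we get $S_c<A<0$.
\item We have $\partial_\xi Y_c^2=2(S_c-1)Y_c+2c$, and $(S_c-1)Y_c>(A-1)A>0$, so $Y_c^2$ increases. Since $Y_c(0)^2=A^2>c^2$, $Y_c$ stays strictly below $A<c$. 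This keeps $\sigma_c=1-c/Y_c\in(0,1)$ and bounded away from $0$, so the solution cannot exit the active region. It also keeps $c/Y_c$ bounded, so the vector field has no singularity.
\item The comparison inequalities \eqref{cnt-tw-l:010}--\eqref{cnt-tw-l:011} for $S_c\pm Y_c$ only use $0<\sigma_c<1$, so they again give two-sided exponential bounds $-c_1e^\xi<S_c,\,Y_c<A' e^\xi$ for suitable constants $c_1$ and $A'<0$.
\end{itemize}
These bounds rule out blow-up and rule out $Y_c$ reaching $c$ in finite $\xi$, so the solution is global. They also give $\sigma_c\to1^-$ as $\xi\to+\infty$, which is the same asymptotics as in Proposition \ref{prop:lg-cnt-tw}.

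\textbf{Weak formulation.} On each side of $\xi=0$ the profile is smooth and satisfies \eqref{eq:tw-03}, and hence \eqref{sys:tw} classically. The Rankine--Hugoniot check above covers the interface. The entropy inequality holds with equality on the smooth pieces and by \eqref{eq:jump-3} at the jump, so the profile is an entropy weak solution.

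\textbf{Main obstacle.} The delicate step is the lower bound keeping $Y_c$ strictly below $c$ uniformly, so that $\sigma_c$ never reaches $0$ and $c/Y_c$ never becomes singular. The monotonicity of $Y_c^2$ is what I would rely on here; it requires checking the sign of $(S_c-1)Y_c+c$ using $S_c<A$.
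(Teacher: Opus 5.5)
Your proposal is correct and follows the paper's route: the vacuum branch $S_c=Ae^\xi$, then the jump value $\sigma_c(0+)=1-c/A\in(0,1)$ from the zero-flux/Rankine--Hugoniot relation (which forces $A<c<0$), then the continuity argument of \eqref{cnt-tw-l:005}--\eqref{cnt-tw-l:012} rerun with data $S_c(0)=Y_c(0)=A$, while your explicit checks of \eqref{eq:jump-2}--\eqref{eq:jump-3} and the correct factor $2$ in $\partial_\xi Y_c^2$ fill in details the paper leaves implicit. One small point is that the bound $(S_c-1)Y_c>(A-1)A$ already presupposes $Y_c\le A$, so either phrase it as a bootstrap or, as your closing remark suggests, use only $Y_c<c$ and $S_c<A$ to get $\partial_\xi Y_c^2>2Ac>0$ directly.
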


\begin{figure}[htbp]
\centering
\begin{tikzpicture}
\begin{axis} [
    xlabel={$\xi = x - ct$},
    ylabel={$\sigma_c(\xi)$},
    xmin=-2, xmax=5,
    ymin=-0.2, ymax=1.2,
    samples=200,
    grid=both,
    grid style={dashed, gray!30},
    width=12cm,
    height=4cm,
    legend pos=north west
]

\addplot[domain=-2:0, blue, ultra thick] {0};
\addplot[domain=0:6, blue, ultra thick] { 1 - 3 / (4 * exp(x))};

\addplot [blue, ultra thick, forget plot] coordinates {
    (0, {1/4}) 
    (0, 0) 
};

\addlegendentry{$\sigma_c(\xi)$}



\end{axis}
\end{tikzpicture}
\caption{Sample of traveling wave profile $\sigma_c(\xi), \ c < 0 $, from Proposition \ref{prop:lg-jump-tw}}
\label{fig:sigma_c_traveling_rg-jump}
\end{figure}

\section{Non-vacuum far field}
\label{sec:non-vacuum-far-field}

In this section, we consider system \eqref{sys:hyperbolicKS} with non-vacuum far field \eqref{non-vacuum-far-field}.

\subsection{The case when $ \sigma_\infty = 1 $}
\label{subsec:saturation-far-field}

For $ \sigma_\infty = 1 $, 
one considers $ \eta:= \sigma_\infty - \sigma = 1 - \sigma $ and $ Q:= \sigma_\infty - S = 1 - S $. Then $ (\eta,  Q) $ satisfies
\begin{subequations}
    \begin{align}
        \dt \eta + \dx(\eta(1-\eta) \dx Q) & = 0, \\
        - \partial_{xx} Q + Q  & = \eta,\\
        Q(-\infty) = \eta(-\infty) & = 0.
    \end{align}
\end{subequations}
That is, $ (\eta, Q) $ satisfies the same system of equations and asymptotics as in \eqref{sys:hyperbolicKS}--\eqref{far-field-vacuum}. Therefore, the construction of stationary and traveling wave solutions follow from sections \ref{sec:stationary}--\ref{sec:traveling_wave}, and therefore is omitted here. 

Without loss of generality, we only consider 
\begin{equation}
    \label{eq:nv-far-field-000}
    \sigma_\infty \in (0,1).
\end{equation}

\subsection{Stationary solutions}
\label{subsec:station-non-vacuum-ff}
Recalling \eqref{eq:stn-08}--\eqref{eq:stn-10}, with $ \sigma_\infty \in (0,1) $, the stationary solution can only start with Type
III at $ - \infty $.

In this case, one has that 
\begin{equation}
    \sigma_s(x) = S_s(x) = \sigma_{\infty} \in (0,1), \qquad x \leq x_0. 
\end{equation}

{\noindent\bf Case 1: Connecting to Type I at $ x = x_0 $.}
In this case, thanks to \eqref{eq:jump-1}, we have
\begin{equation}
    \label{eq:stn-nv-101}
    S_s(x) = \begin{cases}
        \sigma_{\infty}, & x \leq x_0, \\
        A_0 e^x + B_0 e^{-x}, & x_0 < x \leq x_1, 
    \end{cases} \qquad \sigma_s(x) = \begin{cases}
        \sigma_{\infty} > 0, & x \leq x_0, \\
        0, & x_0 < x \leq x_1, 
    \end{cases}
\end{equation}
with 
\begin{equation}
    \label{eq:stn-nv-102}
    S_s(x_0) = \sigma_\infty = A_0 e^{x_0} + B_0 e^{-x_0} \in (0,1), \qquad 0 = \dx S_s(x_0) = A_0 e^{x_0} - B_0 e^{-x_0}. 
\end{equation}
In particular, one has that 
\begin{equation}
    \label{eq:stn-nv-103}
    A_0 e^{x_0} = B_0 e^{-x_0} = \frac{1}{2} \sigma_\infty, \qquad A_0, \ B_0 > 0.  
\end{equation}
Thus, at $ x = x_1 > x_0 $, one will have
\begin{equation}
    \label{eq:stn-nv-104}
    \dx S_s(x_1) = A_0 e^{x_1} - B_0 e^{-x_1} >  A_0 e^{x_0} - B_0 e^{-x_0} = 0. 
\end{equation}
However, since $ \sigma_s(x_1^-) = 0 < \sigma_s(x_1^+) $, \eqref{eq:stn-nv-104} violates the entropy condition \eqref{eq:jump-3}. Hence $ x_1 = \infty $, $ x_0 $ is the only jump, and the stationary solution is given by \eqref{eq:stn-nv-105}--\eqref{eq:stn-nv-106}.

\begin{figure}[htbp]
\centering
\begin{tikzpicture}
\begin{axis}[
    xlabel={$x$},
    ylabel={Value},
    xmin=-4, xmax=4,
    ymin=-0.25, ymax=1.25,
    samples=200,
    grid=both,
    legend pos=south west,
    width=13cm,
    height=5cm
]
\addplot[domain=-5:0, blue, ultra thick] {1/4};
\addplot[domain=0:2, blue, ultra thick, forget plot] { 1/8 *exp(x) + 1/8 *exp(-x)};
\addlegendentry{$S_s(x)$}

\addplot[domain=-5:0, red, dashed, ultra thick] {1/4};
\addplot[domain=0:5, red, dashed, ultra thick, forget plot] {0};
\addlegendentry{$\sigma_s(x)$}

\draw[red, dotted, ultra thick] (axis cs:0,0) -- (axis cs:0,1/4);

\end{axis}
\end{tikzpicture}
\caption{Solution \eqref{eq:stn-nv-105}--\eqref{eq:stn-nv-106} with $ x_0 = 0, \sigma_\infty = 1/4 $}
\end{figure}

{\noindent\bf Case 2: Connecting to Type II at $ x = x_0 $.} In this case, thanks to \eqref{eq:jump-1}, we have, without loss of generality, $ \sigma_\infty \in (0,1) $, and 
\begin{equation}
    \label{eq:stn-nv-201}
    S_s(x) = \begin{cases}
        \sigma_{\infty}, & x \leq x_0, \\
        1 + C_0 e^x + D_0 e^{-x}, & x_0 < x \leq x_1, 
    \end{cases} \qquad \sigma_s(x) = \begin{cases}
        \sigma_{\infty} > 0, & x \leq x_0, \\
        1, & x_0 < x \leq x_1, 
    \end{cases}
\end{equation}
with 
\begin{equation}
    \label{eq:stn-nv-202}
    S_s(x_0) = \sigma_\infty = 1 + C_0 e^{x_0} + D_0 e^{-x_0} \in (0,1), \qquad 0 = \dx S_s(x_0) = C_0 e^{x_0} - D_0 e^{-x_0}. 
\end{equation}
In particular, 
\begin{equation}
    \label{eq:stn-nv-203}
    C_0e^{x_0} = D_0 e^{-x_0} = \frac{\sigma_\infty - 1}{2} < 0, \qquad C_0, \ D_0 < 0. 
\end{equation}
Thus, at $ x= x_1 > x_0 $, one will have
\begin{equation}
    \label{eq:stn-nv-204}
    \dx S_s(x_1) = C_0 e^{x_1} - D_0 e^{-x_1} < C_0 e^{x_0} - D_0 e^{-x_0} =0. 
\end{equation}
However, since $ \sigma_s(x_1^-) = 1 > \sigma_s(x_1^+) $, \eqref{eq:stn-nv-204} violates the entropy condition \eqref{eq:jump-3}. Hence $ x_1 = \infty $, $ x_0 $ is the only jump, and the stationary solution is given by \eqref{eq:stn-nv-205}--\eqref{eq:stn-nv-206}.
\begin{figure}[htbp]
\centering
\begin{tikzpicture}
\begin{axis}[
    xlabel={$x$},
    ylabel={Value},
    xmin=-4, xmax=4,
    ymin=-1.25, ymax=1.25,
    samples=200,
    grid=both,
    legend pos=south west,
    width=13cm,
    height=5cm
]
\addplot[domain=-5:0, blue, ultra thick] {1/4};
\addplot[domain=0:2, blue, ultra thick, forget plot] { 1 - 3/8 *exp(x) - 3/8 *exp(-x)};
\addlegendentry{$S_s(x)$}

\addplot[domain=-5:0, red, dashed, ultra thick] {1/4};
\addplot[domain=0:5, red, dashed, ultra thick, forget plot] {1};
\addlegendentry{$\sigma_s(x)$}

\draw[red, dotted, ultra thick] (axis cs:0,1/4) -- (axis cs:0,1);

\end{axis}
\end{tikzpicture}
\caption{Solution \eqref{eq:stn-nv-205}--\eqref{eq:stn-nv-206} with $ x_0 = 0, \sigma_\infty = 1/4 $}
\end{figure}

\subsection{Traveling wave solutions}
\label{subsec:tw-non-vacuum-ff}

Consider \eqref{eq:tw-ansatz}--\eqref{sys:tw} with 
\begin{equation}
    \label{eq:tw-nv-ff}
    \sigma_c(\xi), \ S_c(\xi) \rightarrow \sigma_\infty, \qquad \text{as} \ \xi \rightarrow - \infty. 
\end{equation}
Then integrating \eqref{eq:tw-01} once yields
\begin{equation}
    \label{eq:tw-nv-001}
    - c \sigma_c + \sigma_c (1-\sigma_c) \partial_\xi S_c = - c \sigma_\infty.
\end{equation}
In particular, since $ \sigma_\infty \in (0,1) $, the dichotomy as in the previous cases does not exist. Moreover, it is easy to show that
\begin{lemma}
\label{lm:no-trivial-point}
    For $ \sigma_\infty \in (0,1) $ and $ c \neq 0 $, any solution to \eqref{eq:tw-nv-001} satisfies
    \begin{equation}
        \label{eq:tw-nv-002}
        \sigma_c(\xi^\pm) \neq 1 \qquad \text{and} \qquad \sigma_c(\xi^\pm) \neq 0 \qquad \forall \ \xi \in \mathbb R. 
    \end{equation}
\end{lemma}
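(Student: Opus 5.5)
The plan is to argue by contradiction directly from the integrated flux relation \eqref{eq:tw-nv-001}, evaluated at one-sided limits. First I record the one-sided limits I intend to use. By \eqref{eq:jump-1}, $S_c$ and $\partial_\xi S_c$ are continuous across any discontinuity, so at every $\xi\in\mathbb R$ the quantity $\partial_\xi S_c(\xi)$ is well defined. For a piecewise smooth profile, $\sigma_c(\xi^\pm)$ exist. Since \eqref{eq:tw-nv-001} holds pointwise on each smooth piece, I can pass to the limit $\xi'\to\xi^\pm$ in it to obtain
\begin{equation*}
-c\,\sigma_c(\xi^\pm)+\sigma_c(\xi^\pm)\bigl(1-\sigma_c(\xi^\pm)\bigr)\partial_\xi S_c(\xi)=-c\,\sigma_\infty .
\end{equation*}
The integration constant $-c\sigma_\infty$ itself comes from letting $\xi\to-\infty$ and using \eqref{eq:tw-nv-ff}, together with $\partial_\xi S_c\to0$ there, which is part of the definition of the semi-wave.

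Next I plug in the two forbidden values. If $\sigma_c(\xi^\pm)=0$, the left side vanishes, so $c\sigma_\infty=0$. This contradicts $c\neq0$ and $\sigma_\infty\in(0,1)$. If $\sigma_c(\xi^\pm)=1$, the logistic factor $\sigma_c(1-\sigma_c)$ vanishes, and the relation becomes $-c=-c\sigma_\infty$, that is, $c(1-\sigma_\infty)=0$. This again contradicts $c\neq0$ and $\sigma_\infty<1$. Together these two cases establish \eqref{eq:tw-nv-002}.

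The only point needing care is justifying that \eqref{eq:tw-nv-001} holds up to the one-sided limits, including at jump points. Inside smooth regions it is immediate. At a discontinuity, it is consistent with the Rankine--Hugoniot relation \eqref{eq:RH-1} in the moving frame, which says $-c\sigma_c+\sigma_c(1-\sigma_c)\partial_\xi S_c$ has zero jump. So the flux is globally constant, and the value $-c\sigma_\infty$ propagates from $-\infty$ through every jump. I expect this propagation of the constant through jumps to be the main (though mild) obstacle, and I would handle it by citing \eqref{eq:RH-1} with $x'(t)=c$ explicitly.
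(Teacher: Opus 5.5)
Your proposal is correct and follows the paper's argument. Both pass to one-sided limits in the integrated flux relation \eqref{eq:tw-nv-001} and observe that $\sigma_c=0$ forces $c\sigma_\infty=0$, while $\sigma_c=1$ forces $c(1-\sigma_\infty)=0$. Your added justification — continuity of $\partial_\xi S_c$, and the Rankine--Hugoniot relation with $x'(t)=c$ carrying the flux constant through jumps — fills in steps the paper leaves implicit.
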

\begin{proof}
    Without loss of generality, suppose that $ \sigma_c (\xi_0+) = 0 $. Then from \eqref{eq:tw-nv-001}, one has that as $ \xi \rightarrow \xi_0^+ $, 
    \begin{equation}
        0 = - c \sigma_\infty \neq 0,
    \end{equation}
    which reaches contradiction. 

    On the other hand, if $ \sigma_c(\xi_1+) = 1 $, then from \eqref{eq:tw-nv-001}, one has that as $ \xi \rightarrow \xi_1^+ $,
    \begin{equation}
        - c = - c \sigma_\infty,
    \end{equation}
    which contradicts the fact $ \sigma_\infty \in (0,1)$. 
\end{proof}

Thanks to lemma \ref{lm:no-trivial-point}, one can easily conclude the following lemma from \eqref{eq:jump-1} and \eqref{eq:tw-nv-001}:

\begin{lemma}
\label{lm:cnt-ttrivial-point}
    Any point $ \xi_0 \in \mathbb R $ satisfying $ \sigma_c(\xi_0^\pm) = \sigma_\infty $ is always a continuous point for $ \sigma_c $ and $ S_c $, and thus $ \sigma_c(\xi_0) = \sigma_\infty $. 
\end{lemma}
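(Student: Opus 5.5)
We use only the flux relation \eqref{eq:tw-nv-001} and the jump condition \eqref{eq:jump-1}. Suppose $\xi_0$ is a point at which one of the one-sided limits of $\sigma_c$ equals $\sigma_\infty$; without loss of generality take $\sigma_c(\xi_0^-)=\sigma_\infty$, the case $\sigma_c(\xi_0^+)=\sigma_\infty$ being symmetric. The goal is to show that the other one-sided limit is also $\sigma_\infty$.

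\textbf{Step 1 (derivative at $\xi_0$).} Since \eqref{eq:jump-1} gives $[[\partial_\xi S_c]]=0$, the derivative $Y:=\partial_\xi S_c(\xi_0)$ is well defined. Pass to the limit $\xi\to\xi_0^-$ in \eqref{eq:tw-nv-001}:
\[
-c\sigma_\infty+\sigma_\infty(1-\sigma_\infty)Y=-c\sigma_\infty .
\]
Because $\sigma_\infty\in(0,1)$, the factor $\sigma_\infty(1-\sigma_\infty)$ is nonzero, so $Y=0$.

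\textbf{Step 2 (the other side).} Set $b:=\sigma_c(\xi_0^+)$. Taking the limit $\xi\to\xi_0^+$ in \eqref{eq:tw-nv-001} and using $Y=0$ gives $-cb=-c\sigma_\infty$. For $c\neq 0$ this forces $b=\sigma_\infty$. So $\sigma_c$ has no jump at $\xi_0$, and if $\sigma_c(\xi_0)$ is given by the one-sided limits, then $\sigma_c(\xi_0)=\sigma_\infty$. Continuity of $S_c$ (and of $\partial_\xi S_c$) at $\xi_0$ follows directly from \eqref{eq:jump-1}.

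Lemma \ref{lm:no-trivial-point} is not strictly needed here, but it confirms that the limits lie in $(0,1)$, so no degenerate factor appears.

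\textbf{Main obstacle: the case $c=0$.} Step 1 still gives $Y=0$, but then \eqref{eq:tw-nv-001} places no constraint on $b$. This is exactly the stationary case of Section \ref{subsec:station-non-vacuum-ff}, where jumps from $\sigma_\infty$ to $0$ or to $1$ do occur. I would therefore read the lemma as applying under the standing assumption $c\neq0$ of Lemma \ref{lm:no-trivial-point}, and state that assumption explicitly in the proof.
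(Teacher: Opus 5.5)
Your proof is correct and is essentially the paper's argument: continuity of $\partial_\xi S_c$ from \eqref{eq:jump-1}, the flux relation \eqref{eq:tw-nv-001} on the side where the limit is $\sigma_\infty$ to get $\partial_\xi S_c(\xi_0)=0$, and then \eqref{eq:tw-nv-001} again with $c\neq 0$ to force $\sigma_c=\sigma_\infty$ on the other side. Your remark about $c=0$ is apt: the paper's proof invokes Lemma \ref{lm:no-trivial-point}, which assumes $c\neq 0$, so that assumption is implicit in the paper's proof as well.
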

\begin{proof}
    Thanks to \eqref{eq:jump-1}, $ \partial_\xi S_c $ is continuous at $ \xi_0 $. Thus $ \sigma_c(\xi_0^\pm) = \sigma_\infty $ and \eqref{eq:tw-nv-001}, together with lemma \ref{lm:no-trivial-point}, imply that $ \partial_\xi S_c(\xi_0) = 0 $. Therefore \eqref{eq:tw-nv-001} implies that $ \sigma_c (\xi_0) = \sigma_\infty $, i.e., $\xi_0 $ is a continuous point of $ \sigma_c $. It is trivial that $ S_c $ is continuous at $ \xi_0 $ thanks to \eqref{eq:jump-1}. 
\end{proof}

\begin{lemma}
\label{lm:no-ttrivial-point}
    If there exists $ \xi_0 \in \mathbb R $ such that $ S_c(\xi_0) = \sigma_c(\xi_0) = \sigma_\infty $, then 
    \begin{equation}
        \label{eq:tw-nv-003}
        \sigma_c = S_c = \sigma_\infty \qquad \forall \ \xi \in \mathbb R. 
    \end{equation}
    We call \eqref{eq:tw-nv-003} the trivial solution to \eqref{sys:tw} with \eqref{eq:tw-nv-ff}.
\end{lemma}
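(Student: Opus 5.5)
Our approach is to reduce the statement to uniqueness for a first-order ODE system in $(S_c,Y_c)$, where $Y_c:=\partial_\xi S_c$, on the set where $\sigma_c$ is determined by $(S_c,Y_c)$. At $\xi_0$, Lemma \ref{lm:cnt-ttrivial-point} and its proof give $\partial_\xi S_c(\xi_0)=0$. Indeed, $\sigma_c(\xi_0)=\sigma_\infty\neq 0,1$ together with \eqref{eq:tw-nv-001} forces $\sigma_\infty(1-\sigma_\infty)Y_c(\xi_0)=0$. So $(S_c,Y_c)(\xi_0)=(\sigma_\infty,0)$, which is exactly the data of the constant solution. The plan is to show that $(S_c,Y_c)$ then coincides with $(\sigma_\infty,0)$ on a maximal interval around $\xi_0$. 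A continuation argument should then give the whole line.

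The first step is to solve \eqref{eq:tw-nv-001} for $\sigma_c$ in terms of $Y_c$ near $\xi_0$. The relation reads $F(\sigma,Y):=-c\sigma+\sigma(1-\sigma)Y+c\sigma_\infty=0$. We have $F(\sigma_\infty,0)=0$ and $\partial_\sigma F(\sigma_\infty,0)=-c\neq 0$, since $c\neq 0$ (the case $c=0$ is the stationary case already treated). The implicit function theorem therefore gives a smooth branch $\sigma=\Sigma(Y)$ with $\Sigma(0)=\sigma_\infty$. The quadratic may have a second root, and a solution could in principle jump to it, since jumps in $\sigma_c$ are allowed as long as $S_c$ and $Y_c$ stay continuous. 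I would exclude this near $\xi_0$ in two steps. First, Lemma \ref{lm:no-trivial-point} keeps $\sigma_c\in(0,1)$. Second, for $|Y|$ small the other root $\sigma\approx c/Y$ has modulus far above $1$, so it is not admissible. Hence $\sigma_c=\Sigma(Y_c)$ on a neighborhood of $\xi_0$.

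Substituting this into \eqref{eq:tw-02} gives the autonomous system
\[
\partial_\xi S_c=Y_c,\qquad \partial_\xi Y_c=S_c-\Sigma(Y_c),
\]
whose right-hand side is locally Lipschitz. The point $(\sigma_\infty,0)$ is an equilibrium of this system. By Picard uniqueness, run both forward and backward from $\xi_0$, the solution equals the equilibrium on a neighborhood of $\xi_0$.

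Finally I would take the maximal interval containing $\xi_0$ on which $\sigma_c=S_c=\sigma_\infty$. If it had a finite endpoint $\xi^*$, then continuity of $S_c$ and $Y_c$ from \eqref{eq:jump-1} gives $(S_c,Y_c)(\xi^*)=(\sigma_\infty,0)$. The one-sided limit of $\sigma_c$ from the inside equals $\sigma_\infty$. Lemma \ref{lm:cnt-ttrivial-point} then makes $\xi^*$ a continuity point with $\sigma_c(\xi^*)=\sigma_\infty$. The local argument above restarts at $\xi^*$, which contradicts maximality. The main obstacle is the local step excluding a jump of $\sigma_c$ to the second root of $F(\cdot,Y_c)=0$ arbitrarily close to $\xi_0$. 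I expect the root-size estimate, combined with $\sigma_c\in(0,1)$ from Lemma \ref{lm:no-trivial-point}, to settle it. If that fails, I would fall back on the entropy condition \eqref{eq:jump-3} with \eqref{eq:jump-2}. For small $|Y|$, a jump of size $O(1)$ would need speed $c=Y(1-\sigma^+-\sigma^-)\approx 0$, which contradicts $c\neq0$.
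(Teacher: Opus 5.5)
Your proof is correct, but it follows a different route from the paper's.

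\textbf{The paper's route.} The paper never decides which root of the quadratic $\sigma_c$ lies on. It rewrites \eqref{eq:tw-nv-001} as $\sigma_\infty-\sigma_c=-\frac{\sigma_c(1-\sigma_c)}{c}\,\partial_\xi S_c$. Then $R_c=S_c-\sigma_\infty$ and $T_c=\partial_\xi S_c$ satisfy the \emph{linear} system $\partial_\xi R_c=T_c$, $\partial_\xi T_c=R_c-a(\xi)T_c$, with $a=\sigma_c(1-\sigma_c)/c$. This coefficient is bounded simply because $\sigma_c\in[0,1]$ and $c\neq0$, whether or not $\sigma_c$ is continuous. Gronwall applied to $R_c^2+T_c^2$, forward and backward from $\xi_0$ where $(R_c,T_c)=(0,0)$, gives $R_c\equiv T_c\equiv0$ on all of $\mathbb R$ in one step. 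Then \eqref{eq:tw-nv-001} gives $\sigma_c\equiv\sigma_\infty$. The two-sided bound \eqref{eq:tw-nv-005-2} should be read as $|\partial_\xi(R_c^2+T_c^2)|\le C(R_c^2+T_c^2)$, which is all Gronwall needs. This trick makes three of your steps unnecessary: the implicit-function step, the exclusion of the second root, and the open--closed continuation.

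\textbf{Your route.} It costs those extra steps but is sound. For $|Y|$ small the second root is approximately $-c/Y$, not $c/Y$, though only its size matters. What rules it out is the standing assumption $\sigma\in[0,1]$, not Lemma \ref{lm:no-trivial-point}, which only concerns one-sided limits equal to $0$ or $1$. That assumption forces $\sigma_c=\Sigma(\partial_\xi S_c)$ wherever $\partial_\xi S_c$ is small. As a by-product, you get that $\sigma_c$ is smooth near any point where $\partial_\xi S_c$ is small.

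You are also right to exclude $c=0$ explicitly. The paper's proof divides by $c$ without comment, and for $c=0$ the stationary profiles \eqref{eq:stn-nv-105} and \eqref{eq:stn-nv-205} show the statement fails.
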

\begin{proof}
    From \eqref{eq:tw-nv-001}, 
    one immediately has that
    \begin{equation}
        \label{eq:tw-nv-004}
        \partial_\xi S_c(\xi_0) = 0 . 
    \end{equation}
    Let $ R_c := S_c - \sigma_\infty $, $ T_c := \partial_\xi R_c = \partial_{\xi}S_c $, and $ \eta_c := \sigma_c - \sigma_\infty $. Then from \eqref{eq:tw-nv-001} and \eqref{eq:tw-02} imply 
    \begin{equation}
    \label{eq:tw-nv-005}
        \partial_\xi R_c = T_c  = \frac{c \eta_c}{\sigma_c(1-\sigma_c)}, \qquad \partial_\xi T_c 
        = R_c - \frac{\sigma_c (1-\sigma_c)}{c}T_c, 
    \end{equation}
    with $ (R_c, T_c) \vert_{\xi = \xi_0} = 0 $. 
    Moreover, one can verify that, after applying the Cauchy inequality
    \begin{equation}
    \label{eq:tw-nv-005-2}
        \frac{1}{C}(R_c^2 + T_c^2) \leq \partial_\xi (R_c^2 + T_c^2) = 4 T_c R_c - \frac{2\sigma_c (1-\sigma_c)}{c}T_c^2 \leq C (R_c^2 + T_c^2), 
    \end{equation}
    for some constant $ C \in \mathbb R $, since $ \sigma_c (1-\sigma_c) \in \lbrack 0,\frac{1}{2}\rbrack $. 
    This implies $ R_c \equiv T_c \equiv 0 $, which yields \eqref{eq:tw-nv-003}.
\end{proof}


\begin{lemma}
    \label{lm:no-crossing-asymptotics}
    Other than the trivial solution \eqref{eq:tw-nv-003}, there is no solution $ (\sigma_c, S_c) $ to \eqref{sys:tw} with \eqref{eq:tw-nv-ff} admitting a point $ \sigma_c (\xi_0) = \sigma_\infty $ for some $ \xi_0 \in \mathbb R $.

    In other words, 
    if there exists a point $ \xi_0 \in \mathbb R $ such that $ \sigma_c(\xi_0) = \sigma_\infty $, then $ (\sigma_c,S_c) $ is the trivial solution \eqref{eq:tw-nv-003}. 
\end{lemma}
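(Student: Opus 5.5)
The plan is to reduce the statement to Lemma \ref{lm:no-ttrivial-point}, which says that if $S_c(\xi_0)=\sigma_c(\xi_0)=\sigma_\infty$ at some point then the solution is trivial. So, given $\sigma_c(\xi_0)=\sigma_\infty$, it suffices to show that also $S_c(\xi_0)=\sigma_\infty$.

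First, consider the case $c\neq 0$. At $\xi_0$, Lemma \ref{lm:cnt-ttrivial-point} shows that $\xi_0$ is a continuity point with $\partial_\xi S_c(\xi_0)=0$. The relation \eqref{eq:tw-nv-001} can then be written as
\[
\partial_\xi S_c=\frac{c(\sigma_c-\sigma_\infty)}{\sigma_c(1-\sigma_c)},
\]
which is legitimate because Lemma \ref{lm:no-trivial-point} guarantees $\sigma_c\in(0,1)$. Since $\sigma\mapsto \frac{c(\sigma-\sigma_\infty)}{\sigma(1-\sigma)}$ is strictly monotone on $(0,1)$ (its derivative has a fixed sign when $c\neq0$), one may invert it: $\sigma_c=\Phi_c(\partial_\xi S_c)$ with $\Phi_c$ smooth. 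In particular $\sigma_c$ is continuous wherever $\partial_\xi S_c$ is, and $\partial_\xi S_c$ is continuous everywhere by \eqref{eq:jump-1}. So the profile has no jumps at all, and it solves the smooth autonomous system
\[
\partial_\xi S_c=Y_c,\qquad \partial_\xi Y_c=S_c-\Phi_c(Y_c).
\]

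Next, I use the far field. As $\xi\to-\infty$ we have $(S_c,Y_c)\to(\sigma_\infty,0)$, and this point is exactly the equilibrium of the system above. Linearizing there, with $\Phi_c'(0)=\sigma_\infty(1-\sigma_\infty)/c=:\kappa$, gives the matrix $\begin{pmatrix}0&1\\1&-\kappa\end{pmatrix}$. Its determinant is $-1$, so it has one positive and one negative eigenvalue, and the equilibrium is a saddle. A trajectory converging to it as $\xi\to-\infty$ must therefore lie on the one-dimensional unstable manifold. Along that manifold (each of its two branches) the quantity $Y_c=\partial_\xi S_c$ cannot vanish again, except at the equilibrium itself.

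Then I argue by contradiction. Suppose $\sigma_c(\xi_0)=\sigma_\infty$. This forces $Y_c(\xi_0)=0$. Since the trajectory is on the unstable manifold and not at the equilibrium, the point $(S_c,Y_c)(\xi_0)$ must equal $(\sigma_\infty,0)$. Lemma \ref{lm:no-ttrivial-point} then gives the trivial solution.

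The main obstacle is proving that $Y_c\neq0$ along the unstable branch. My first attempt would be an invariant-region argument. Take the branch with $Y_c>0$ and $S_c>\sigma_\infty$ near the equilibrium (the unstable eigenvector has both components of the same sign). In the quadrant $\{S>\sigma_\infty,\ Y>0\}$ we have $Y'=S-\Phi_c(Y)$. If $Y$ reached $0$ from above at some first point, then at that point $S>\sigma_\infty$ (because $S$ was increasing while $Y>0$) and $\Phi_c(0)=\sigma_\infty$, so $Y'>0$ there, which is a contradiction. The other branch is handled symmetrically. For $c=0$, the relation \eqref{eq:tw-nv-001} reduces to the stationary case, which the earlier stationary analysis handles, so this case is excluded or treated separately.
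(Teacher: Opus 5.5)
Your argument is correct for $c\neq0$, and its decisive step is the same as the paper's. At the first zero $\xi^*$ of $Y_c=\partial_\xi S_c$ one has $\partial_\xi Y_c(\xi^*)=S_c(\xi^*)-\sigma_\infty$. This has the wrong sign, because $S_c$ has been strictly monotone on $(-\infty,\xi^*)$ starting from $\sigma_\infty$. The paper writes this as $\partial_{\xi\xi}S_c(\xi_0)=S_c(\xi_0)-\sigma_c(\xi_0)>0$ and integrates backwards.

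Where you differ is in how you control the behaviour to the left of that point. The paper sets $\xi_0=\inf\{\xi:\sigma_c(\xi)=\sigma_\infty\}$ and claims $\xi_0>-\infty$, on the grounds that otherwise $\sigma_c\equiv S_c\equiv\sigma_\infty$ on a half-line. That inference is not valid: an infimum of $-\infty$ only gives a sequence of crossing points tending to $-\infty$. Your route closes this gap in two steps:
\begin{itemize}
\item The map $\sigma\mapsto c(\sigma-\sigma_\infty)/(\sigma(1-\sigma))$ is strictly monotone on $(0,1)$; its derivative is $c[(\sigma-\sigma_\infty)^2+\sigma_\infty(1-\sigma_\infty)]/(\sigma^2(1-\sigma)^2)$. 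So $\sigma_c=\Phi_c(Y_c)$, which gives global continuity of $\sigma_c$ (this already contains Lemma \ref{lm:no-jump-tw-nv}). It also gives $Y_c=\Phi_c^{-1}(\sigma_c)\to0$ as $\xi\to-\infty$, a fact you use without stating it.
\item The hyperbolic saddle at $(\sigma_\infty,0)$ then fixes the sign of $Y_c$ for $\xi\ll0$.
\end{itemize}

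The cost is an appeal to the unstable manifold theorem, and even that can be avoided. Suppose $a<b$ are zeros of $Y_c$ with $Y_c>0$ on $(a,b)$. The same computation gives $\sigma_\infty\le S_c(a)<S_c(b)\le\sigma_\infty$, a contradiction, and the case $Y_c<0$ is symmetric. If $Y_c$ vanishes on an interval, then $S_c=\sigma_\infty$ there and Lemma \ref{lm:no-ttrivial-point} applies. Hence a non-trivial profile has at most one zero of $Y_c$, and boundedness from below of the crossing set follows elementarily.

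Finally, your treatment of $c=0$ needs correcting. The stationary analysis does not handle that case; it refutes the claim there. The profile \eqref{eq:stn-nv-105} is non-trivial and has $\sigma_s=\sigma_\infty$ on a whole half-line. So the lemma must be read with $c\neq0$, which Lemma \ref{lm:no-trivial-point} (used by both proofs) already assumes.
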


\begin{proof}
    We prove by contradiction. Suppose that the set 
    \begin{equation}
        \label{001-tw-nv}
        \mathsf M: = \lbrace \xi \vert \sigma_c(\xi) = \sigma_\infty \rbrace \neq \emptyset. 
    \end{equation}
    Let 
    \begin{equation}
        \label{002-tw-nv}
        \xi_0 = \inf \mathsf M. 
    \end{equation}
    
    First, we claim that $ \xi_0 > -\infty $. Otherwise, there exists $ \xi_1 \in \mathbb R $ such that for all $ \xi < \xi_1 $,  $ \sigma_c (\xi) = \sigma_\infty $ and $ S_c (\xi) = \sigma_\infty $ thanks to \eqref{eq:tw-nv-001} and \eqref{eq:tw-nv-ff}. Lemma \ref{lm:no-ttrivial-point} implies that $ (\sigma_c,S_c) $ is a trivial solution. 
    
    Next, without loss of generality, assume that $ \sigma_c > \sigma_\infty $ for $ \xi < \xi_0 $ and $ c > 0 $. Then from \eqref{eq:tw-nv-001} (or equivalently \eqref{eq:tw-nv-005}), one has that
    \begin{equation}
        \label{003-tw-nv}
        \partial_\xi S_c = \frac{c (\sigma_c - \sigma_\infty)}{\sigma_c (1-\sigma_c)} > 0 \quad \forall \ \xi < \xi_0 \qquad \text{and} \qquad  \partial_\xi S_c(\xi_0) = 0.
    \end{equation}
    Hence $ S_c(\xi) > S_c(-\infty) = \sigma_\infty $ for all $ \xi \leq  \xi_0 $. However, from \eqref{eq:tw-02}, one has that
    \begin{equation}
    \label{004-tw-nv}
        \partial_{\xi\xi} S_c (\xi_0) = S_c(\xi_0) - \sigma_c(\xi_0) > 0.
    \end{equation}
    Thanks to \eqref{eq:jump-1} and \eqref{eq:tw-nv-001}, $ S_c $ and $ \sigma_c $ are continuous at $ \xi_0 $. Thus for some $ \xi_\delta < \xi_0 $, one can calculate that
    \begin{equation}
         0 < \frac{c (\sigma_c - \sigma_\infty)}{\sigma_c (1-\sigma_c)}\Big\vert_{\xi = \xi_1} = \partial_\xi S_c(\xi_1) = - \int_{\xi_1}^{\xi_0} \partial_{\xi\xi} S_c(\xi') \, d\xi' < 0,
    \end{equation}
    which is impossible.
    The case when $ c > 0 $ and/or $ \sigma_c(\xi) < \sigma_\infty $ for $\xi< \xi_0 $ follows with similar arguments. 
    This finishes the proof of the lemma. 
\end{proof}

\begin{lemma}
    \label{lm:no-jump-tw-nv}
    There is no jump in any non-trivial traveling wave solution to system \eqref{sys:tw} with \eqref{eq:tw-nv-ff}.

    In other words, all traveling solutions to system \eqref{sys:tw} with \eqref{eq:tw-nv-ff} are continuous.
\end{lemma}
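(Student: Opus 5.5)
Suppose, for contradiction, that a non-trivial traveling wave $(\sigma_c,S_c)$ of \eqref{sys:tw} with \eqref{eq:tw-nv-ff} has a jump at some $\xi_0$, with one-sided values $\sigma^\pm:=\sigma_c(\xi_0\pm)$ and $\sigma^+\neq\sigma^-$. We derive a contradiction from the integrated flux relation \eqref{eq:tw-nv-001}, the continuity \eqref{eq:jump-1}, and the entropy condition \eqref{eq:jump-3}.

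\textbf{Step 1: reduce to $c\neq 0$.} If $c=0$ the problem is stationary and is covered by Section \ref{subsec:station-non-vacuum-ff}. Those profiles contain intervals where $\sigma=\sigma_\infty$, which Lemma \ref{lm:no-crossing-asymptotics} excludes for $c\neq0$, so they are not traveling waves in the present sense. Hence we take $c\neq 0$. Lemma \ref{lm:no-trivial-point} then gives $\sigma^\pm\in(0,1)$. Lemma \ref{lm:no-crossing-asymptotics} together with Lemma \ref{lm:cnt-ttrivial-point} shows that $\sigma_c-\sigma_\infty$ never vanishes, not even as a one-sided limit. So, since it cannot cross $\sigma_\infty$, either $\sigma_c>\sigma_\infty$ everywhere or $\sigma_c<\sigma_\infty$ everywhere. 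In particular $\sigma^+$ and $\sigma^-$ lie strictly on the same side of $\sigma_\infty$.

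\textbf{Step 2: jump relation.} Let $Y:=\partial_\xi S_c(\xi_0)$, which is well defined by \eqref{eq:jump-1}. Evaluating \eqref{eq:tw-nv-001} on both sides of $\xi_0$ gives, for $\sigma\in\{\sigma^+,\sigma^-\}$,
\[
\sigma(1-\sigma)Y = c(\sigma-\sigma_\infty).
\]
Since $\sigma_c-\sigma_\infty$ has a fixed sign, $Y\neq0$ and $\operatorname{sign}Y=\operatorname{sign}c\cdot\operatorname{sign}(\sigma_c-\sigma_\infty)$. Subtracting the two relations and dividing by $\sigma^+-\sigma^-$ gives $Y(1-\sigma^+-\sigma^-)=c$, which is \eqref{eq:jump-2} with $x'=c$. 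Thus $\sigma^\pm$ are the two roots of the quadratic
\[
Y\sigma^2-(Y-c)\sigma-c\sigma_\infty=0,
\]
so $\sigma^+\sigma^-=-c\sigma_\infty/Y$. Because $\sigma^\pm\in(0,1)$, the product $\sigma^+\sigma^-$ is positive, forcing $\operatorname{sign}Y=-\operatorname{sign}c$. Combined with the sign identity above, this gives $\sigma_c<\sigma_\infty$ everywhere.

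\textbf{Step 3: entropy.} With $\operatorname{sign}Y=-\operatorname{sign}c$, condition \eqref{eq:jump-3} picks out the direction of the jump: if $c>0$ then $Y<0$ and we need $\sigma^+>\sigma^-$; if $c<0$ then $Y>0$ and we need $\sigma^+<\sigma^-$. Now evaluate the quadratic $p(\sigma)=Y\sigma^2-(Y-c)\sigma-c\sigma_\infty$ at the endpoints: $p(0)=-c\sigma_\infty$ and $p(\sigma_\infty)=c\sigma_\infty(\sigma_\infty-1)$. For $c\neq0$ these have the same sign, so $p$ has either zero or two roots in $(0,\sigma_\infty)$. Two roots both below $\sigma_\infty$ means the vertex $\frac{Y-c}{2Y}$ lies below $\sigma_\infty$, i.e.\ $\sigma^++\sigma^-=1-c/Y<2\sigma_\infty$.

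\textbf{Step 4: the contradiction (main obstacle).} It remains to show that the ordering demanded by Step 3 is incompatible with the configuration of smooth branches on each side of the jump. Far from the jump, \eqref{eq:tw-nv-001} gives $\partial_\xi S_c=\frac{c(\sigma_c-\sigma_\infty)}{\sigma_c(1-\sigma_c)}$, which has the fixed sign $-\operatorname{sign}c$ on the whole line. So $S_c$ is monotone and starts from $\sigma_\infty$ at $-\infty$. Following the argument of Lemma \ref{lm:no-crossing-asymptotics}, I would use \eqref{eq:tw-02}, $\partial_{\xi\xi}S_c=S_c-\sigma_c$, to track the sign of $\partial_\xi Y$ on the left branch. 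I would compare the two roots: the entropy-admissible choice in Step 3 forces a side value for $\sigma$ that makes $|\partial_\xi S_c|$ decrease toward the jump while $S_c-\sigma_c$ pushes it the other way. This should yield an inequality like \eqref{004-tw-nv} that contradicts monotonicity.

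This last step is where I expect the real difficulty: choosing the correct root and establishing the sign comparison for both signs of $c$. If this direct route fails, I would fall back on an exhaustive case check over $\operatorname{sign}c$ and the ordering of $\sigma^\pm$, verifying in each case that \eqref{eq:jump-3} and $\sigma^+\sigma^-=-c\sigma_\infty/Y$ cannot hold together.
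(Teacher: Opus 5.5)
Your proposal never reaches a contradiction. Steps 1--2 set up the right object, the quadratic $p(\sigma)=Y\sigma^2-(Y-c)\sigma-c\sigma_\infty$, and the distinct side values $\sigma^+\neq\sigma^-$ would have to be its two roots. But Step 3 contains an algebra slip, and Step 4 is only a plan. The slip is
\[
p(\sigma_\infty)=Y\sigma_\infty^2-(Y-c)\sigma_\infty-c\sigma_\infty=Y\sigma_\infty(\sigma_\infty-1),
\]
not $c\sigma_\infty(\sigma_\infty-1)$.

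Since $\sigma_\infty\in(0,1)$, $p(\sigma_\infty)$ always has the sign opposite to the leading coefficient $Y$. So $\sigma_\infty$ lies strictly between the two roots of $p$, and $\sigma^+,\sigma^-$ would sit on opposite sides of $\sigma_\infty$. You showed correctly in Step 2 that this is impossible. Because $Y=\partial_\xi S_c(\xi_0)$ is one and the same number on both sides, $\sign Y=\sign c\cdot\sign(\sigma^\pm-\sigma_\infty)$ forces $\sigma^+$ and $\sigma^-$ onto the same side. That is the whole proof, and it is exactly the paper's argument, written there for $c>0$ and $\sigma^->\sigma_\infty$, where $Y>0$ and $p(\sigma_\infty)<0$.

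In your own normalization $\sign Y=-\sign c$, the corrected value gives $\sign p(\sigma_\infty)=\sign c=-\sign p(0)$. So $p$ has exactly one root in $(0,\sigma_\infty)$, and your ``zero or two roots'' claim is false. More directly, $p(0)p(1)=-c^2\sigma_\infty(1-\sigma_\infty)<0$, so $p$ has exactly one root in $(0,1)$. Equivalently, $\sigma\mapsto c(\sigma-\sigma_\infty)/(\sigma(1-\sigma))$ is strictly monotone on $(0,1)$, since the numerator of its derivative is $c\,[(\sigma-\sigma_\infty)^2+\sigma_\infty(1-\sigma_\infty)]$.

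So you need neither the entropy condition \eqref{eq:jump-3} nor any global information about the smooth branches. The jump is excluded pointwise by \eqref{eq:tw-nv-001} together with the continuity of $\partial_\xi S_c$ from \eqref{eq:jump-1}. Your Step 4 program (choosing a root via entropy and tracking $\partial_{\xi\xi}S_c=S_c-\sigma_c$ as in \eqref{004-tw-nv}) is unnecessary. As written (``this should yield an inequality like\dots'') it is a heuristic and cannot stand in for the missing step.

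Two smaller points:
\begin{enumerate}
\item In Step 1, Lemmas \ref{lm:no-crossing-asymptotics} and \ref{lm:cnt-ttrivial-point} only say that $\sigma_c$ and its one-sided limits never equal $\sigma_\infty$. ``Cannot cross $\sigma_\infty$'' does not follow for a possibly discontinuous $\sigma_c$, since jumping over $\sigma_\infty$ is precisely the scenario under test. The same-side property at $\xi_0$ has to come from the common value $Y$. Here $Y\neq0$, because $Y=0$ in \eqref{eq:tw-nv-001} forces $\sigma^\pm=\sigma_\infty$.
\item $c\neq0$ should simply be a standing hypothesis, as in Lemma \ref{lm:no-trivial-point}, rather than something to argue away. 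The $c=0$ solutions of Section \ref{subsec:station-non-vacuum-ff} genuinely jump, so the statement only holds for $c\neq0$.
\end{enumerate}
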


\begin{proof}
Let $ \xi_0 $ be a potential jump for $ \sigma_c $.
Without loss of generality, we consider the case when $ c > 0 $ and $ \sigma_c(\xi_0^-) > \sigma_\infty $. Then thanks to \eqref{eq:jump-1} and \eqref{eq:tw-nv-001},
\begin{equation}
    \label{005-tw-nv}
    0< \partial_\xi S_c(\xi_0) = \frac{c(\sigma_c - \sigma_\infty)}{\sigma_c (1-\sigma_c)}\Big\vert_{\xi \rightarrow \xi_0^-} = \frac{c(\sigma_c - \sigma_\infty)}{\sigma_c (1-\sigma_c)}\Big\vert_{\xi \rightarrow \xi_0^+}.
\end{equation}
In particular, $ \sigma_c(\xi_0^-) $ and $ \sigma_c(\xi_0^+) $ are the two solutions to the algebraic equation
\begin{equation}
    \label{006-tw-nv}
    \partial_\xi S_c(\xi_0) = \frac{c(x-\sigma_\infty)}{x(1-x)} \quad \text{i.e.} \quad p(x):=\partial_\xi S_c(\xi_0) x^2 + (c-\partial_\xi S_c(\xi_0)) x - c\sigma_\infty = 0. 
\end{equation}
However, $ p $ is a quadratic function with a positive second order term, and since $ \sigma_\infty \in (0,1) $, 
\begin{equation}
    \label{007-tw-nv}
    p(\sigma_\infty) = \partial_\xi S_c(\xi_0) (\sigma_\infty - 1) \sigma_\infty < 0. 
\end{equation}
Thus the two solutions $ x_{1,2} $ to \eqref{006-tw-nv} satisfy $ x_1 < \sigma_\infty < x_2 $. Since $ \sigma_c(\xi_0^-) > \sigma_\infty $, this implies $ \sigma_c(\xi_0^+) < \sigma_\infty $. This leads to contradiction to \eqref{005-tw-nv} since 
\begin{equation}
    \partial_\xi S_c(\xi_0) = \frac{c(\sigma_c - \sigma_\infty)}{\sigma_c (1-\sigma_c)}\Big\vert_{\xi \rightarrow \xi_0^+} < 0.
\end{equation}

The case when $ c < 0 $ and/or $ \sigma_c(\xi_0^-) < \sigma_\infty $ follows with similar arguments. This finishes the proof of the lemma. 
\end{proof}

It follows from lemmas \ref{lm:no-crossing-asymptotics} and \ref{lm:no-jump-tw-nv} that
\begin{proposition}
    \label{prop:tw-nv-cnt}
    All traveling wave solutions $(\sigma_c, S_c)$ to system \eqref{sys:tw} with \eqref{eq:tw-nv-ff} are continuous, and satisfy 
    \begin{itemize}
        \item either $ \sigma_c > \sigma_\infty, \  c \partial_\xi S_c > 0 $ for all $ \xi \in \mathbb R $;
        \item or $ \sigma_c < \sigma_\infty, \  c \partial_\xi S_c < 0 $ for all $ \xi \in \mathbb R $;
        \item or $ (\sigma_c, S_c) = (\sigma_\infty, \sigma_\infty) $ for all $ \xi \in \mathbb R $. 
    \end{itemize}
\end{proposition}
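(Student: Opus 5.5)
The plan is to combine Lemma \ref{lm:no-jump-tw-nv} with Lemma \ref{lm:no-crossing-asymptotics}, and then read off the sign of $\partial_\xi S_c$ from the integrated relation \eqref{eq:tw-nv-001}. The case $c=0$ is not a traveling wave in the present sense, so throughout I take $c \neq 0$. By Lemma \ref{lm:no-trivial-point}, $\sigma_c(\xi^\pm)\in(0,1)$ for every $\xi$. This lets \eqref{eq:tw-nv-001} be solved for the derivative, giving
\begin{equation*}
\partial_\xi S_c = \frac{c(\sigma_c-\sigma_\infty)}{\sigma_c(1-\sigma_c)}
\end{equation*}
wherever $\sigma_c$ is defined.

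First, continuity. Lemma \ref{lm:no-jump-tw-nv} says that a non-trivial solution has no jump. The trivial solution \eqref{eq:tw-nv-003} is obviously continuous. So every traveling wave profile is continuous, and by \eqref{eq:jump-1} so are $S_c$ and $\partial_\xi S_c$.

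Next, the trichotomy. Suppose the solution is not trivial. Lemma \ref{lm:no-crossing-asymptotics} then says that $\sigma_c(\xi)\neq\sigma_\infty$ for every $\xi\in\mathbb R$. Since $\sigma_c$ is continuous on $\mathbb R$, the intermediate value theorem forces $\sigma_c-\sigma_\infty$ to have one fixed strict sign on all of $\mathbb R$. The formula above has a strictly positive denominator, so $\partial_\xi S_c$ has the sign of $c(\sigma_c-\sigma_\infty)$. Hence $c\,\partial_\xi S_c = c^2(\sigma_c-\sigma_\infty)/(\sigma_c(1-\sigma_c))$ is positive when $\sigma_c>\sigma_\infty$ and negative when $\sigma_c<\sigma_\infty$, for every $\xi$. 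The remaining case is the trivial solution, which is the third alternative.

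I expect the main obstacle to be making sure the cited lemmas actually cover every configuration, and in particular that "continuous" holds at every point before the intermediate value theorem is used. Lemma \ref{lm:no-jump-tw-nv} was proved only for $c>0$ with $\sigma_c(\xi_0^-)>\sigma_\infty$; the other cases were left to "similar arguments." I would verify these directly. The quadratic $p$ in \eqref{006-tw-nv} always satisfies $p(\sigma_\infty)=\partial_\xi S_c(\xi_0)\,\sigma_\infty(\sigma_\infty-1)$. Whenever $\partial_\xi S_c(\xi_0)\neq 0$, this means $p$ has one root on each side of $\sigma_\infty$, when $\partial_\xi S_c(\xi_0)>0$. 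When $\partial_\xi S_c(\xi_0)<0$, $p$ is concave and the same sign computation again separates the roots. So the two one-sided limits would lie on opposite sides of $\sigma_\infty$. That contradicts the fact that both give the same value of $\partial_\xi S_c(\xi_0)$, since the one-sided values of $c(\sigma_c-\sigma_\infty)$ then have opposite signs. The degenerate case $\partial_\xi S_c(\xi_0)=0$ forces $\sigma_c(\xi_0^\pm)=\sigma_\infty$, and Lemma \ref{lm:cnt-ttrivial-point} handles it. With these checks done, the proposition follows.
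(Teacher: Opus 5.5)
Your proposal is correct and follows the paper's own route. The paper simply says the proposition follows from Lemmas \ref{lm:no-crossing-asymptotics} and \ref{lm:no-jump-tw-nv}, which is exactly your argument. You also make explicit the restriction $c\neq 0$ (needed, since the stationary profiles \eqref{eq:stn-nv-105} and \eqref{eq:stn-nv-205} are discontinuous), and your check of the remaining sign cases via $p(\sigma_\infty)=\partial_\xi S_c(\xi_0)\,\sigma_\infty(\sigma_\infty-1)$ fills in details the paper leaves to ``similar arguments.''
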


With proposition \ref{prop:tw-nv-cnt}, we will be able to search for the non-trivial traveling wave solutions.

\subsubsection{The case when $ \sigma_c > \sigma_\infty $ and $ c > 0 $.}
\label{subsubsec:++}
Thanks to proposition \ref{prop:tw-nv-cnt}, we have $ \partial_\xi S_c > 0 $. 
Recalling \eqref{eq:tw-nv-005}--\eqref{eq:tw-nv-005-2}, 
\begin{equation}
    \label{008-tw-nv}
    \begin{cases}
        \partial_\xi R_c = T_c, \\
        \partial_\xi T_c = R_c - \frac{\sigma_c(T_c) (1-\sigma_c(T_c))}{c} T_c = R_c + \sigma_\infty - \sigma_c,
    \end{cases} \qquad \lim_{\xi\rightarrow -\infty} (R_c, T_c) = 0,
\end{equation}
where, 
    $ R_c := S_c - \sigma_\infty, \ T_c := \partial_\xi S_c > 0 $ and thanks to \eqref{008-tw-nv}
    \begin{equation}
        \label{008-tw-nv-1}
        R_c(\xi) = \int_{-\infty}^\xi T_c(\xi') \, d\xi' > 0,
    \end{equation}
    provided the solution to \eqref{008-tw-nv} exists.

Recalling $ p = p(x) $ from \eqref{006-tw-nv}, one can easily verify that
\begin{equation}
\label{009-tw-nv}
    p(0) = - c \sigma_\infty < 0, \quad p(\sigma_\infty) < 0, \quad p(1) = c(1-\sigma_\infty) > 0.
\end{equation}
Since $ p $ has a positive leading order term, one can conclude that
\begin{equation}
\label{010-tw-nv}
    \sigma_c = \sigma_c (T_c) := \frac{(T_c - c) + \sqrt{(c-T_c)^2 + 4 c T_c \sigma_\infty}}{2 T_c} = \frac{2 c \sigma_\infty}{(c - T_c) + \sqrt{(c-T_c)^2 + 4 c T_c \sigma_\infty}}.
\end{equation}
In particular, 
\begin{equation}
    \label{011-tw-nv}
    0< \sigma_\infty < \sigma_c < 1 \quad \text{and hence} \quad 0 < \frac{\sigma_c(T_c) (1-\sigma_c(T_c))}{c} \leq \frac{1}{4c}.
\end{equation}
Therefore, from \eqref{008-tw-nv}, one has that 
\begin{equation}
    \label{012-tw-nv}
    R_c + \frac{1}{4c}T_c \leq \partial_\xi (T_c + \frac{1}{2c}R_c) = R_c + (\frac{1}{2c} - \frac{\sigma_c(T_c) (1-\sigma_c(T_c))}{c})T_c < R_c + \frac{1}{2c} T_c.
\end{equation}
In particular, since $ T_c, R_c >0 $, one has that
\begin{equation}
    \label{013-tw-nv}
    (\frac{1}{4c} + 2 c)(T_c + \frac{1}{2c}R_c) \leq \partial_\xi (T_c + \frac{1}{2c}R_c) < (\frac{1}{2c} + 2c ) (T_c + \frac{1}{2c}R_c).
\end{equation}
Solving \eqref{013-tw-nv} leads to 
\begin{equation}
    \label{014-tw-nv}
    (T_c(0) + \frac{1}{2c}R_c(0)) e^{(\frac{1}{4c} + 2c)\xi} \leq T_c(\xi) + \frac{1}{2c}R_c(\xi) \leq (T_c(0) + \frac{1}{2c}R_c(0)) e^{(\frac{1}{2c} + 2c)\xi},
\end{equation}
provided that the solution to \eqref{008-tw-nv}.

\smallskip

To show the existence of non-trivial solution to \eqref{008-tw-nv}, we investigate the integral curve of the initial value problem. To be more precise, from \eqref{008-tw-nv}, one considers that
\begin{equation}
    \label{101-tw-nv}
    \frac{d Y_{++}}{d R_{++}} = \frac{\partial_\xi T_{++}^2}{\partial_\xi R_{++}} = 2 \lbrack R_{++} + \sigma_\infty - \sigma_c(\sqrt{Y_{++}}) )\rbrack, 
\end{equation}
with $ Y_{++}: = T_{++}^2 $ and $ \sigma_c = \sigma_c(T_{++}) $ given by \eqref{010-tw-nv}. Our goal is to find the integral curve of \eqref{101-tw-nv} passing through $ (R_{++}, Y_{++}) = (0,0) $. 

Meanwhile, from \eqref{010-tw-nv}, one has that $ \sigma_c (\sqrt Y_{++}) $ is smooth for $ Y_{++} > 0 $ and H\"older continuous at $ Y_{++} = 0 $, and $ \sigma_c (0) = \sigma_\infty $. Moreover, one can calculate that
\begin{equation}
    \label{102-tw-nv}
    \frac{d\sigma_c(T_{++})}{d T_{++}} = \sigma_c^2(T_{++}) \frac{\sqrt{(c-T_{++})^2 + 4c T_{++} \sigma_\infty} - \lbrack (T_{++}-c) + 2c\sigma_\infty\rbrack}{2 c \sigma_\infty \sqrt{(c-T_{++})^2 + 4c T_{++} \sigma_\infty}} > 0, \qquad \forall \ T_{++} \geq 0.
\end{equation}
Therefore, \begin{equation}\label{103-tw-nv} \sigma_\infty = \sigma_c(0) < \sigma_c(\sqrt{Y_{++}}) < \sigma_c(\infty) = 1 \qquad \forall \  Y_{++} > 0. \end{equation}

Now to construct a non-negative solution to \eqref{101-tw-nv} passing through $ (R_{++}, Y_{++}) = (0,0) $, let $ \varepsilon > 0 $ and consider the approximating initial data $ Y_{++}(0) = \varepsilon > 0 $. Denote the solution as $ Y_{++}(R_{++};\varepsilon) $. By the Picard-Lindel\"of theorem, $ Y_{++}(R_{++};\varepsilon) $ exists at least for small $ R_{++} $. 

We claim that $ Y_{++}(R_{++};\varepsilon) >0 $ as long as the solution exists for $ R_{++} > 0 $. Otherwise, there will be a $ R_{++,0} $ within the existing interval, such that $ Y_{++}(R_{++,0};\varepsilon) =0 $ but $ Y_{++}(R_{++};\varepsilon) > 0 $ for $ 0 < R_{++} < R_{++,0} $. However, this contradicts to \eqref{101-tw-nv}, which yields that $ \frac{dY_{++}}{dR_{++}} \vert_{R_{++,0}} = 2 R_{++,0} > 0 $.

Thanks to \eqref{103-tw-nv}, solving \eqref{101-tw-nv} implies that
\begin{equation}
    \label{104-tw-nv}
    Y_{++}(R_{++};\varepsilon) \leq  R_{++}^2 + 4 R_{++} + \varepsilon.
\end{equation}
With a continuity argument, one can conclude that $ Y_{++}(R_{++};\varepsilon) $ exists for all $ R_{++}> 0 $, $ Y_{++}(R_{++};\varepsilon) > 0 $, and it is the unique solution to the approximating initial value problem. 

Send $ \varepsilon \rightarrow 0 $ in the sequence $ \lbrace Y_{++}(R_{++};\varepsilon) \rbrace_\varepsilon $. It is easy to verify that for arbitrary interval $ R_{++} \in \lbrack 0, \mathfrak R_{++} \rbrack $ for $ \mathfrak R_{++} \in (0,\infty) $. $ \lbrace Y_{++}(R_{++};\varepsilon) \rbrace_\varepsilon $ is a sequence of uniformly bounded and equicontinuous functions. Thus the  Arzela-Ascoli theorem implies that the limit
\begin{equation}
    \label{105-tw-nv}
    Y_{{++},0}(R_{++}) := \lim_{\varepsilon \rightarrow 0} Y_{++}(R_{++};\varepsilon)
\end{equation}
exists, and is bounded, continuous, and non-negative for $ R_{++} \in \lbrack 0, \mathfrak R_{++} \rbrack $. Since $ \mathfrak R_{++} $ is arbitrary, with a bootstrap argument, one can conclude that $ Y_{++,0} \geq 0 $ and $ Y_{++,0} $ is smooth for all $ R_{++}> 0 $.

Moreover, $ Y_{++,0} $ solves \eqref{101-tw-nv}, $ Y_{++,0}(0) = 0 $, and thus $ Y_{++,0} > 0 $ for all $ R_{++} > 0 $. In addition, thanks to \eqref{102-tw-nv}, 
one has that from \eqref{101-tw-nv}
\begin{equation}
    \label{106-tw-nv}
    \frac{dY_{++}}{dR_{++}} = 2\lbrack R_{++} - \underbrace{\frac{d\sigma_c(0)}{dT_{++}}}_{>0} \sqrt{Y_{++}} + \mathcal O(Y_{++}) \rbrack
\end{equation}
for $ Y_{++} $ small enough. Thus one can conclude that $ (R_{++}, Y_{++,0}(R_{++})) $ is the unique integral curve for \eqref{101-tw-nv} passing through $ (R_{++}, Y_{++}) = (0,0) $.

\smallskip 

Now we are ready to state the following characterization of the non-trivial traveling wave solutions: 
\begin{proposition}
\label{prop:tw++}
\begin{enumerate}
    \item Let $ (R_{++}, Y_{{++},0}(R_{++}) ) $ be the unique integral curve of \eqref{101-tw-nv} passing though $ (R_{++}, Y_{++}) = (0,0) $ constructed above. Then one has that $ Y_{++,0}(R_{++}) > 0, \ \forall \ R_{++} > 0 $.   

    \item 
    Consider the system of ODEs in \eqref{008-tw-nv} with $ (R_c, T_c)\vert_{\xi = 0} = (R_{++}, \sqrt{Y_{++,0}(R_{++})}) $ for arbitrary $ R_{++} > 0 $. 
    Then the initial value problem exists a global solution for all $ \xi \in \mathbb R $, satisfying
    \begin{itemize}
        \item $ R_c > 0 $ and $ T_c > 0 $ for all $ \xi \in \mathbb R $; 
        \item $ \lim_{\xi\rightarrow -\infty} (R_c, T_c) = 0 $;
        \item $ c_{1,++} e^{ c_{1,++} \xi} < R_c(\xi) , \ T_c(\xi) < c_{2,++} e^{ c_{2,++} \xi}, \ \forall \xi \in \mathbb R $, for some $ c_{1,++}, c_{2,++} \in (0,\infty) $. 
    \end{itemize}

    \item
    In particular, one can obtain the traveling wave solution $ (\sigma_c, S_c) $, satisfying \eqref{eq:tw-nv-ff}, by the formula \eqref{010-tw-nv} and $ S_c = R_c + \sigma_\infty $. 
    \end{enumerate}
    
\end{proposition}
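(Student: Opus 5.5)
Part 1 is already contained in the preceding construction. $Y_{++,0}$ is obtained as the Arzel\`a--Ascoli limit of $Y_{++}(\cdot;\varepsilon)$; it solves \eqref{101-tw-nv} and satisfies $Y_{++,0}(0)=0$. Positivity for $R_{++}>0$ follows from the same first-zero argument used for $Y_{++}(\cdot;\varepsilon)$. Near the origin, the expansion \eqref{106-tw-nv} gives $Y_{++,0}\sim R_{++}^2/(1+\kappa)^2$ with $\kappa=\frac{d\sigma_c(0)}{dT_{++}}$. Plugging $Y=aR^2$ into \eqref{106-tw-nv} gives $a=1-\kappa\sqrt a$, which pins down the slope $\sqrt{Y}/R$ at the origin and shows the curve leaves it into the open quadrant.

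For part 2, set $T_c=\sqrt{Y_{++,0}(R_c)}$ along the curve. The system \eqref{008-tw-nv} then reduces to the scalar autonomous ODE
\[
\partial_\xi R_c=\sqrt{Y_{++,0}(R_c)},\qquad R_c(0)=R_{++}>0.
\]
The right-hand side is smooth and positive for $R_c>0$. Uniqueness of the integral curve shows that the solution of the full $2\times2$ system with data $(R_{++},\sqrt{Y_{++,0}(R_{++})})$ stays on this curve while $R_c>0$, since $\partial_\xi T_c^2=2T_c\,\partial_\xi T_c$ reproduces \eqref{101-tw-nv}. Hence $R_c$ is strictly increasing.

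Forward in $\xi$, \eqref{104-tw-nv} (passed to the limit) gives $\sqrt{Y_{++,0}}\le R+2$, so there is no blow-up and the solution is global for $\xi>0$. Backward in $\xi$, $R_c$ decreases. It cannot reach $0$ in finite time, because near $0$ we have $\sqrt{Y_{++,0}(R)}\approx \sqrt a\,R$; the ODE is then Lipschitz-like at the origin and $R\equiv0$ is an equilibrium. Therefore $R_c>0$ for all $\xi$, and $R_c\to0$ as $\xi\to-\infty$ (the limit must be an equilibrium, and the only one is $0$). It follows that $T_c=\sqrt{Y_{++,0}(R_c)}\to0$ as well, with $R_c,T_c>0$ throughout.

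For the exponential bounds, on $R\in(0,\mathfrak R]$ I will use two-sided estimates $m R\le \sqrt{Y_{++,0}(R)}\le M R$. These hold near $0$ by the asymptotics above, and on compact subsets of $(0,\infty)$ by positivity and continuity. Together they give $e^{m\xi}$-type bounds for $\xi\to-\infty$ for both $R_c$ and $T_c$. For $\xi\to+\infty$, I will use \eqref{014-tw-nv}, noting that $T_c+\frac{1}{2c}R_c$ grows exponentially with rates between $\frac1{4c}+2c$ and $\frac1{2c}+2c$. I will also need that $T_c/R_c$ stays bounded above and below for large $R$, which follows from $dY/dR=2(R+\sigma_\infty-\sigma_c)$ with $\sigma_c\in(\sigma_\infty,1)$: this gives $Y_{++,0}=R^2+O(R)$, so $T_c\sim R_c$. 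Choosing $c_{1,++}$ small and $c_{2,++}$ large then merges the two regimes into the single stated inequality.

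Part 3 is direct. Equation \eqref{010-tw-nv} recovers $\sigma_c$ from $T_c$, and $S_c=R_c+\sigma_\infty$. Equation \eqref{eq:tw-02} holds by the second line of \eqref{008-tw-nv}, and \eqref{eq:tw-nv-001} holds by construction of $\sigma_c(T_c)$. The far field \eqref{eq:tw-nv-ff} follows from $T_c\to0$ and $\sigma_c(0)=\sigma_\infty$.

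The main obstacle is the behavior at the origin. The vector field involves $\sqrt{Y}$ and is not Lipschitz at $(0,0)$, so I have to verify the linear asymptotics $\sqrt{Y_{++,0}}\sim\sqrt a\,R$ rigorously. I plan to do this with a sub/supersolution comparison for \eqref{101-tw-nv} using $Y=(a\pm\delta)R^2$.
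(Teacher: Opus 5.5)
Your arguments for part 1, the first two bullets of part 2, and part 3 are sound, and they fill in what the paper's one-line proof leaves implicit. The third bullet of part 2, however, has a genuine gap.

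The step ``choose $c_{1,++}$ small and $c_{2,++}$ large to merge the two regimes'' fails because the same constant sits in the exponent. Enlarging $c_{2,++}$ makes $c_{2,++}e^{c_{2,++}\xi}$ decay faster as $\xi\to-\infty$, which destroys the upper bound there. In fact no such bound holds:
\begin{itemize}
\item \emph{Decay at $-\infty$.} The system \eqref{008-tw-nv} is smooth in $(R_c,T_c)$, since $(c-T)^2+4cT\sigma_\infty\ge 4c^2\sigma_\infty(1-\sigma_\infty)>0$. Its linearization at the origin is $R'=T$, $T'=R-\kappa T$, with $\kappa=\frac{d\sigma_c}{dT}(0)=\frac{\sigma_\infty(1-\sigma_\infty)}{c}>0$, and your trajectory is the unstable manifold of this saddle. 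Hence $R_c,T_c$ decay like $e^{\lambda_+\xi}$ as $\xi\to-\infty$, with $\lambda_+=\frac{-\kappa+\sqrt{\kappa^2+4}}{2}\in(0,1)$. This is also the correct value of $\sqrt a$; your $1/(1+\kappa)$ is a slip.
\item \emph{Growth at $+\infty$.} Integrating \eqref{101-tw-nv} with $\sigma_\infty<\sigma_c<1$ gives $R^2-2(1-\sigma_\infty)R<Y_{++,0}(R)<R^2$. So $R_c-2(1-\sigma_\infty)\le\partial_\xi R_c<R_c$ once $R_c\ge 2(1-\sigma_\infty)$, i.e.\ the growth rate is exactly $1$.
\end{itemize}
An upper bound $c_{2,++}e^{c_{2,++}\xi}$ valid on all of $\mathbb R$ would therefore need $c_{2,++}\ge 1$ and $c_{2,++}\le\lambda_+<1$ simultaneously. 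The lower bound also fails when $R_{++}\le\lambda_+$, since it forces $\lambda_+\le c_{1,++}<R_{++}$.

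The ingredient you import for $\xi\to+\infty$, \eqref{014-tw-nv}, is itself wrong, and the paper's proof rests on it as well. Set $Z=T_c+\frac1{2c}R_c$. From \eqref{012-tw-nv} one only gets $\min\{2c,\frac1{4c}\}Z\le\partial_\xi Z\le\max\{2c,\frac1{2c}\}Z$; in \eqref{013-tw-nv} the coefficients are added instead of taking the min/max. Moreover, the integrated inequalities reverse for $\xi<0$. A growth rate of at least $\frac1{4c}+2c\ge\sqrt2$ also contradicts your own conclusion $T_c=R_c+O(1)$, a conflict you should have caught.

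What your method does prove is separate two-sided bounds with different rates on the two half-lines: $R_c,T_c\asymp e^{\lambda_+\xi}$ for $\xi\le0$, and $R_c,T_c\asymp e^{\xi}$ for $\xi\ge0$. The bullet must be restated in that form rather than proved as written.

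Incidentally, smoothness in $(R_c,T_c)$ also dissolves your ``main obstacle.'' The tangency $T_c\sim\lambda_+R_c$ comes from the stable manifold theorem. And $R_c$ cannot reach $0$ in finite backward time simply because the origin is an equilibrium of a Lipschitz system. So no sub/supersolution comparison in the $(R,Y)$ variables is needed.
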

\begin{proof}
The existence and uniqueness of the integral curve $ (R_{++}, Y_{{++},0}(R_{++}) ) $ follows from \eqref{101-tw-nv}--\eqref{106-tw-nv}. The existence of solutions to the initial value problem \eqref{008-tw-nv} follows easily from the integral curve. In particular, \eqref{014-tw-nv} yields the asymptotic behavior of $ (R_c, T_c) $. 
\end{proof}

\begin{figure}[htbp]
\centering
\begin{tikzpicture}
\begin{axis} [
    xlabel={$\xi = x - ct$},
    ylabel={$\sigma_c(\xi)$},
    xmin=-4, xmax=4,
    ymin=-0.2, ymax=1.2,
    samples=200,
    grid=both,
    grid style={dashed, gray!30},
    width=12cm,
    height=4cm,
    legend pos=north west
]

\addplot[domain=-4:4, blue, ultra thick] {1/4 +  (3 * exp (x)) / (1 + 4 * exp(x))};
\addlegendentry{$\sigma_c(\xi)$}

\addplot[domain=-4:4, red, dotted, ultra thick] {1/4};
\addlegendentry{$\sigma_\infty$}

\end{axis}
\end{tikzpicture}
\caption{Sample of traveling wave profile $\sigma_c(\xi) > \sigma_\infty = 1/4, \ c > 0 $, from Proposition \ref{prop:tw++}}
\label{fig:sigma_c_tw++}
\end{figure}

The remaining cases follow with similar arguments. We will sketch the key steps in the following subsections. 

\subsubsection{The case when $ \sigma_c > \sigma_\infty $ and $ c < 0 $.} 
\label{subsubsec:+-}
Thanks to proposition \ref{prop:tw-nv-cnt}, we have $ T_c = \partial_\xi S_c < 0 $. Thus from \eqref{008-tw-nv}, one has 
\begin{equation}
    \label{201-tw-nv}
    R_c(\xi) = \int_{-\infty}^\xi T_c(\xi')\,d\xi' < 0.
\end{equation}
Moreover, $ p = p(x) $ from \eqref{006-tw-nv}, one can easily verify that
\begin{equation}
    \label{202-tw-nv}
    p(0) = - c \sigma_\infty > 0, \quad p(\sigma_\infty) > 0, \quad \text{and} \quad  p(1) = c(1-\sigma_\infty)  < 0.
\end{equation}
Since $ p $ has a negative leading order term, one has that 
\begin{equation}
    \label{203-tw-nv}
    \sigma_c = \sigma_c(T_c) : = \frac{(T_c - c) - \sqrt{(c-T_c)^2 + 4 c T_c \sigma_\infty}}{2 T_c} = \frac{-2 c \sigma_\infty}{(T_c-c) + \sqrt{(c-T_c)^2 + 4 c T_c \sigma_\infty}},
\end{equation}
and
\begin{equation}
    \label{204-tw-nv}
    \frac{d\sigma_c}{dT_c} = \sigma_c^2(T_c) \frac{ \sqrt{(c-T_c)^2 + 4c T_c \sigma_\infty} + (T_c - c) + 2 c \sigma_\infty }{2c\sigma_\infty \sqrt{(c-T_c)^2 + 4c T_c \sigma_\infty}} < 0,  \qquad \forall \ T_c \leq 0. 
\end{equation}
Thus, $ \sigma_\infty = \sigma_c(0) < \sigma_c(T_c) < \sigma_c(-\infty) = 1 $. 

Then correspondingly, one considers
\begin{equation}
    \label{205-tw-nv}
    \frac{d Y_{+-}}{d R_{+-}} = \frac{\partial_\xi T_{+-}^2}{\partial_\xi R_{+-}} = 2 \lbrack R_{+-} + \sigma_\infty - \sigma_c( - \sqrt{Y_{+-}}) )\rbrack, 
\end{equation}
where $ Y_{+-} : = T_{+-}^2 $ and $ \sigma_c $ given by \eqref{203-tw-nv}. With similar arguments as in section \ref{subsubsec:++}, one can show that there exists an integral curve $ (R_{+-}, Y_{+-,0}(R_{+-})) $ of \eqref{205-tw-nv} for all $ R_{+-} \leq 0 $,  passing through $ (R_{+-}, Y_{+-}) = (0,0) $, satisfying  $ Y_{+-,0} > 0 $ for all $ R_{+-} < 0 $. Thus one has the following proposition:
\begin{proposition}
    \label{prop:tw+-}
    \begin{enumerate}
    \item Let $ (R_{+-}, Y_{{+-},0}(R_{+-}) ) $ be the unique integral curve of \eqref{205-tw-nv} passing though $ (R_{+-}, Y_{+-}) = (0,0) $ as above. Then one has that $ Y_{+-,0}(R_{+-}) > 0, \ \forall \ R_{+-} < 0 $.   

    \item 
    Consider the system of ODEs in \eqref{008-tw-nv} with \eqref{203-tw-nv} and $ (R_c, T_c)\vert_{\xi = 0} = (R_{+-}, -\sqrt{Y_{+-,0}(R_{+-})}) $ for arbitrary $ R_{+-} < 0 $. 
    Then the initial value problem exists a global solution for all $ \xi \in \mathbb R $, satisfying
    \begin{itemize}
        \item $ R_c < 0 $ and $ T_c < 0 $ for all $ \xi \in \mathbb R $; 
        \item $ \lim_{\xi\rightarrow -\infty} (R_c, T_c) = 0 $;
        \item $ -c_{1,+-} e^{ c_{1,+-} \xi} < R_c(\xi) , \ T_c(\xi) < -c_{2,+-} e^{ c_{2,+-} \xi}, \ \forall \xi \in \mathbb R $, for some $ c_{1,+-}, c_{2,+-} \in (0,\infty) $. 
    \end{itemize}

    \item
    In particular, one can obtain the traveling wave solution $ (\sigma_c, S_c) $, satisfying \eqref{eq:tw-nv-ff}, by the formula \eqref{203-tw-nv} and $ S_c = R_c + \sigma_\infty $. 
    \end{enumerate}
\end{proposition}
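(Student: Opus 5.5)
The plan is to mirror the construction of Section \ref{subsubsec:++}, working in the third quadrant $R_{+-}<0$, $T_c<0$. The main difference is that the integral curve is now traversed in the direction of decreasing $R$.

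\textbf{Setup and approximating problems.} Along a genuine solution, $\partial_\xi R_c=T_c<0$, so $R_c$ decreases from $0$ (at $\xi=-\infty$) into negative values. I therefore parametrize the orbit by $R_{+-}\le 0$ and study \eqref{205-tw-nv} on $(-\infty,0]$.

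By \eqref{204-tw-nv}, the map $Y\mapsto\sigma_c(-\sqrt Y)$ is increasing. It takes values in $(\sigma_\infty,1)$ for $Y>0$, is smooth for $Y>0$, is H\"older at $Y=0$, and has derivative in $T$ at $0$ of order one. Hence the right-hand side of \eqref{205-tw-nv} satisfies
\[
2(R_{+-}-1)<\frac{dY_{+-}}{dR_{+-}}<2R_{+-}.
\]
In particular $dY/dR<0$ for $R<0$, so $Y$ increases as $R$ decreases.

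For $\varepsilon>0$, I will solve \eqref{205-tw-nv} backward from $Y_{+-}(0)=\varepsilon$ using Picard--Lindel\"of. Positivity is automatic, since $Y$ grows as $R\to-\infty$. Integrating the bound above backward gives
\[
\varepsilon+R^2\le Y_{+-}(R;\varepsilon)\le \varepsilon+R^2+2|R|,
\]
so the approximate solutions are global on $(-\infty,0]$. They are uniformly bounded and equi-Lipschitz on compact sets. Arzel\`a--Ascoli then yields a limit $Y_{+-,0}\ge R^2>0$ for $R<0$, with $Y_{+-,0}(0)=0$, and the limit solves \eqref{205-tw-nv}. This gives item 1, except for uniqueness.

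\textbf{Uniqueness of the curve through the origin.} I expect this to be the main obstacle, because the vector field is only H\"older at $Y=0$. I would first try a comparison argument. Suppose two solutions $Y_1,Y_2$ leave $(0,0)$. On $R<0$ both are bounded below by $R^2>0$, and there the field is smooth and decreasing in $Y$, since $-\sigma_c(-\sqrt Y)$ is decreasing. Then $\frac{d}{dR}(Y_1-Y_2)$ has sign opposite to $Y_1-Y_2$. Moving backward in $R$ (toward $-\infty$), this makes $|Y_1-Y_2|$ non-increasing as $R$ decreases. Equivalently, $|Y_1-Y_2|$ is non-decreasing as $R\uparrow0$. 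Since the difference vanishes at $R=0$, it must vanish identically.

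If the monotonicity bookkeeping comes out with the wrong orientation, I would fall back on the expansion analogous to \eqref{106-tw-nv}, namely
\[
\frac{dY}{dR}=2\bigl[R-\sigma_c'(0)(-\sqrt Y)+\mathcal O(Y)\bigr],
\]
and show that any solution through the origin behaves like $Y\sim R^2$. A Gr\"onwall estimate on $\sqrt{Y_1}-\sqrt{Y_2}$ would then finish the argument.

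\textbf{Items 2 and 3.} Given $R_{+-}<0$, I set $T_c=-\sqrt{Y_{+-,0}(R_c)}$, which reduces \eqref{008-tw-nv} to the scalar ODE
\[
\partial_\xi R_c=-\sqrt{Y_{+-,0}(R_c)},
\]
with a smooth right-hand side for $R_c<0$. This ODE is autonomous and strictly decreasing in $\xi$, so it has a global solution staying on the curve.

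As $\xi\to-\infty$, $R_c$ increases toward the only zero of the right-hand side, namely $0$. Using $Y_{+-,0}\sim R^2$, the approach is exponential, and therefore $(R_c,T_c)\to0$. As $\xi\to+\infty$, $|R_c|$ grows at most exponentially because $\sqrt{Y}\le|R|+C$. The two-sided exponential bounds follow from differential inequalities for $T_c+\frac{1}{2|c|}R_c$, obtained exactly as in \eqref{012-tw-nv}--\eqref{014-tw-nv} with signs reversed, using $0<\sigma_c(1-\sigma_c)\le\frac14$.

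Finally, I recover $\sigma_c$ from \eqref{203-tw-nv} and set $S_c=R_c+\sigma_\infty$. Reversing the derivation of \eqref{008-tw-nv} shows that \eqref{eq:tw-nv-001} and \eqref{eq:tw-02} hold. By Lemma \ref{lm:no-jump-tw-nv} no jumps can occur, so the entropy inequality is satisfied trivially.
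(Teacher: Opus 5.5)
Your overall route is the paper's: approximate from $Y_{+-}(0)=\varepsilon$, derive a priori bounds, pass to the limit with Arzel\`a--Ascoli, then reduce \eqref{008-tw-nv} to the curve. Your bounds $\varepsilon+R^2\le Y_{+-}(R;\varepsilon)\le\varepsilon+R^2+2|R|$ are correct and make positivity trivial.

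\textbf{The gap: uniqueness of the curve through $(0,0)$.} Your comparison argument has exactly the orientation problem you were worried about. The field $f(R,Y)=2[R+\sigma_\infty-\sigma_c(-\sqrt Y)]$ is decreasing in $Y$, so $\frac{d}{dR}|Y_1-Y_2|\le 0$. Hence $|Y_1-Y_2|$ is non-increasing as $R\uparrow 0$, not non-decreasing. Together with $Y_1-Y_2=0$ at $R=0$, this only yields $|Y_1-Y_2|\ge 0$ for $R<0$, which says nothing.

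This is precisely where the $+-$ case departs from Section \ref{subsubsec:++}. There you integrate forward in $R$, and the same monotonicity gives contraction. Here you integrate backward, and it gives expansion.

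Your fallback does not close as stated either. In $u=\sqrt Y$, the Lipschitz constant of the field near the origin has size $|R|/u^2\sim 1/|R|$, which is not integrable at $R=0$. So a two-sided Gr\"onwall estimate fails, with or without $Y\sim R^2$.

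\textbf{The fix: use the sign of the singular term.} With $u=\sqrt{Y_{+-}}=-T_c$, equation \eqref{205-tw-nv} reads
\[
\frac{du}{dR}=\frac{R}{u}+k(u),\qquad k(u):=\frac{\sigma_\infty-\sigma_c(-u)}{u}=\int_0^1\sigma_c'(-\theta u)\,d\theta .
\]
Here $k$ is smooth up to $u=0$, because in \eqref{203-tw-nv} the discriminant equals $c^2>0$ and the denominator equals $-2c>0$ at $T=0$. Take two solutions through the origin; each satisfies $u_i\ge|R|>0$ and $u_i\le\sqrt{R^2+2|R|}$. With $w=u_1-u_2$,
\[
\frac{dw}{dR}=\frac{|R|}{u_1u_2}\,w+k(u_1)-k(u_2).
\]
Hence $\frac{d}{dR}\bigl(e^{2LR}w^2\bigr)\ge 0$ on $[-\mathfrak R,0)$, where $L$ is a Lipschitz constant of $k$ on the relevant range. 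Since $e^{2LR}w^2\to 0$ as $R\to 0^-$, it follows that $w\equiv 0$.

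Equivalently, the $(R,T)$ system is smooth at the origin, which is a hyperbolic saddle. Its unstable eigenvalue is $\lambda_+=\frac{\gamma+\sqrt{\gamma^2+4}}{2}$ with $\gamma=\frac{\sigma_\infty(1-\sigma_\infty)}{|c|}$, and your curve is a branch of its one-dimensional unstable manifold.

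\textbf{Knock-on effects in items 2--3.}
\begin{itemize}
\item You invoke the unproved $Y_{+-,0}\sim R^2$ to stop $R_c$ from reaching $0$ at finite $\xi$. Use instead that the origin is an equilibrium of a locally Lipschitz system, so no nonconstant orbit reaches it in finite $\xi$.
\item Combining $|T_c|=\sqrt{Y_{+-,0}}\ge|R_c|$ with your inequalities for $T_c+\frac{1}{2|c|}R_c$ gives two-sided exponential bounds. However, they come with different rates on $\xi\le 0$ and on $\xi\ge 0$.
\item In fact, no single $c_{2,+-}$ can work in the lower bound of the third bullet. The orbit leaves the origin like $e^{\lambda_+\xi}$ with $\lambda_+>1$, but grows like $e^{\xi}$ as $\xi\to+\infty$ (since $R_c''=R_c+\sigma_\infty-\sigma_c$ with bounded forcing). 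This defect is inherited from \eqref{014-tw-nv}, so you should state the bounds piecewise in $\xi$.
\end{itemize}
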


\subsubsection{The case when $ \sigma_c < \sigma_\infty $ and $ c > 0 $.}\label{subsubsec:-+}

Thanks to proposition \ref{prop:tw-nv-cnt}, we have $ T_c = \partial_\xi S_c < 0 $. Thus from \eqref{008-tw-nv}, one has 
\begin{equation}
    \label{301-tw-nv}
    R_c(\xi) = \int_{-\infty}^\xi T_c(\xi')\,d\xi' < 0.
\end{equation}
Moreover, $ p = p(x) $ from \eqref{006-tw-nv}, one can easily verify that
\begin{equation}
    \label{302-tw-nv}
    p(0) = - c \sigma_\infty < 0, \quad p(\sigma_\infty) > 0, \quad \text{and} \quad  p(1) = c(1-\sigma_\infty)  > 0.
\end{equation}
Since $ p $ has a negative leading order term, one has that 
\begin{equation}
    \label{303-tw-nv}
    \sigma_c = \sigma_c(T_c) : = \frac{(T_c - c) + \sqrt{(c-T_c)^2 + 4 c T_c \sigma_\infty}}{2 T_c} = \frac{-2 c \sigma_\infty}{(T_c-c) - \sqrt{(c-T_c)^2 + 4 c T_c \sigma_\infty}},
\end{equation}
and
\begin{equation}
    \label{304-tw-nv}
    \frac{d\sigma_c}{dT_c} = \sigma_c^2(T_c) \frac{ \sqrt{(c-T_c)^2 + 4c T_c \sigma_\infty} - \lbrack (T_c - c) + 2 c \sigma_\infty \rbrack }{ 2c\sigma_\infty \sqrt{(c-T_c)^2 + 4c T_c \sigma_\infty}} > 0,  \qquad \forall \ T_c \leq 0. 
\end{equation}
Thus, $ \sigma_\infty = \sigma_c(0) > \sigma_c(T_c) > \sigma_c(-\infty) = 0 $. 

Then correspondingly, one considers
\begin{equation}
    \label{305-tw-nv}
    \frac{d Y_{-+}}{d R_{-+}} = \frac{\partial_\xi T_{-+}^2}{\partial_\xi R_{-+}} = 2 \lbrack R_{-+} + \sigma_\infty - \sigma_c( - \sqrt{Y_{-+}}) )\rbrack, 
\end{equation}
where $ Y_{-+} : = T_{-+}^2 $ and $ \sigma_c $ given by \eqref{303-tw-nv}. With similar arguments as in section \ref{subsubsec:++}, one can show that there exists an integral curve $ (R_{-+}, Y_{-+,0}(R_{-+})) $ of \eqref{205-tw-nv} for all $ R_{-+} \leq 0 $,  passing through $ (R_{-+}, Y_{-+}) = (0,0) $, satisfying  $ Y_{-+,0} > 0 $ for all $ R_{-+} < 0 $. Thus one has the following proposition:
\begin{proposition}
    \label{prop:tw-+}
    \begin{enumerate}
    \item Let $ (R_{-+}, Y_{{-+},0}(R_{-+}) ) $ be the unique integral curve of \eqref{305-tw-nv} passing though $ (R_{-+}, Y_{-+}) = (0,0) $ as above. Then one has that $ Y_{-+,0}(R_{-+}) > 0, \ \forall \ R_{-+} < 0 $.   

    \item 
    Consider the system of ODEs in \eqref{008-tw-nv} with \eqref{203-tw-nv} and $ (R_c, T_c)\vert_{\xi = 0} = (R_{-+}, -\sqrt{Y_{-+,0}(R_{-+})}) $ for arbitrary $ R_{-+} < 0 $. 
    Then the initial value problem exists a global solution for all $ \xi \in \mathbb R $, satisfying
    \begin{itemize}
        \item $ R_c < 0 $ and $ T_c < 0 $ for all $ \xi \in \mathbb R $; 
        \item $ \lim_{\xi\rightarrow -\infty} (R_c, T_c) = 0 $;
        \item $ -c_{1,-+} e^{ c_{1,-+} \xi} < R_c(\xi) , \ T_c(\xi) < -c_{2,-+} e^{ c_{2,-+} \xi}, \ \forall \xi \in \mathbb R $, for some $ c_{1,-+}, c_{2,-+} \in (0,\infty) $. 
    \end{itemize}

    \item
    In particular, one can obtain the traveling wave solution $ (\sigma_c, S_c) $, satisfying \eqref{eq:tw-nv-ff}, by the formula \eqref{303-tw-nv} and $ S_c = R_c + \sigma_\infty $. 
    \end{enumerate}
\end{proposition}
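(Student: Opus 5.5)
The plan is to mirror the construction of Section \ref{subsubsec:++}, adapting the signs to the regime $\sigma_c<\sigma_\infty$, $c>0$. By Proposition \ref{prop:tw-nv-cnt} any non-trivial profile in this regime is continuous with $T_c=\partial_\xi S_c<0$, so \eqref{301-tw-nv} gives $R_c<0$. The reduced system is \eqref{008-tw-nv}, where $\sigma_c=\sigma_c(T_c)$ is now the root of $p$ in $(0,\sigma_\infty)$; this root is selected using the sign information \eqref{302-tw-nv} and the monotonicity \eqref{304-tw-nv}. The key consequence is
\begin{equation*}
0<\sigma_c(T_c)<\sigma_\infty \quad \text{for } T_c<0, \qquad \sigma_c(0)=\sigma_\infty ,
\end{equation*}
so that the forcing term $\sigma_\infty-\sigma_c$ in $\partial_\xi T_c$ lies in $(0,\sigma_\infty)$.

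\textbf{Step 1: the integral curve in the phase plane.} I would pass to the variables $(R_{-+},Y_{-+})$ with $Y_{-+}=T_{-+}^2$, which satisfy \eqref{305-tw-nv}, and look for a curve through $(0,0)$ defined for $R_{-+}\le 0$. Since $R$ decreases as $\xi$ increases here, I would reparametrize by $s=-R_{-+}\ge 0$. Then
\begin{equation*}
\frac{dY}{ds}=2\bigl[s-(\sigma_\infty-\sigma_c(-\sqrt Y))\bigr].
\end{equation*}
I regularize by taking $Y(0)=\varepsilon>0$ and check positivity: at a first zero $s_0>0$ we would have $dY/ds=2s_0>0$, a contradiction. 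Because $0<\sigma_\infty-\sigma_c<\sigma_\infty$, I get the bound $Y\le s^2+\varepsilon$, so the solutions are global. Arzel\`a--Ascoli in the limit $\varepsilon\to0$ then produces $Y_{-+,0}\ge0$. Uniqueness near the origin would follow from an expansion analogous to \eqref{106-tw-nv}, using $\frac{d\sigma_c}{dT}(0)>0$ from \eqref{304-tw-nv}.

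\textbf{Step 2: the ODE in $\xi$ and its asymptotics.} Given $R_{-+}<0$, I set $(R_c,T_c)(0)=(R_{-+},-\sqrt{Y_{-+,0}(R_{-+})})$ and solve \eqref{008-tw-nv}; invariance of the curve gives $T_c=-\sqrt{Y_{-+,0}(R_c)}$ along the solution. Going backward in $\xi$, the quantity $R_c$ increases toward $0$. The curve behaves like $Y\approx s^2$ near the origin, so $T_c\sim R_c$, which forces exponential decay and hence $(R_c,T_c)\to0$ as $\xi\to-\infty$. For the bounds I would repeat \eqref{012-tw-nv}--\eqref{014-tw-nv} applied to $-(T_c+\frac{1}{2c}R_c)$, using $\sigma_c(1-\sigma_c)/c\le 1/(4c)$. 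This gives two-sided exponential bounds and global existence forward in $\xi$. Finally I recover $\sigma_c$ from \eqref{303-tw-nv} and set $S_c=R_c+\sigma_\infty$.

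\textbf{Main obstacle.} The hardest part is Step 1 at the origin. There $\sigma_c(-\sqrt Y)$ is only H\"older in $Y$, so the claim that the limiting curve leaves $(0,0)$ and is strictly positive for $s>0$, rather than sticking at $Y\equiv0$, needs care. Unlike the $++$ case, the forcing term $-(\sigma_\infty-\sigma_c)\approx -k\sqrt Y$ with $k>0$ has a helpful sign: $Y\equiv0$ is a solution only where $s=0$. I would therefore try to compare with the explicit sub- and supersolutions $Y=(\lambda s)^2$. Substituting gives $2\lambda^2 s=2s-2k\lambda s$, which has the positive root $\lambda$ of $\lambda^2+k\lambda-1=0$. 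This root would pin down both the slope at the origin and the uniqueness of the curve.
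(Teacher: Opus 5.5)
\textbf{Verdict.} Your approach is the paper's own, and item 1 and the first two bullets of item 2 go through. The third bullet of item 2 cannot be proved as stated: it is false, and that defect is inherited from the paper.

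\textbf{What works.} You transport the $++$ construction of Section~\ref{subsubsec:++} to $s=-R_{-+}$, and Steps 1--2 are sound. You also rightly use the root of $p$ in $(0,\sigma_\infty)$, i.e.\ \eqref{303-tw-nv}, even though item 2 cites \eqref{203-tw-nv}.

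At the origin, make explicit what legitimizes your comparison with $(\mu s)^2$, given that all curves vanish at $s=0$ where the equation is not Lipschitz. For $c>0$, $\sigma_c(T)$ is increasing, so $2[s-(\sigma_\infty-\sigma_c(-\sqrt Y))]$ is nonincreasing in $Y$. This one-sided Lipschitz property:
\begin{itemize}
\item justifies the comparison with coinciding data at $s=0$;
\item gives uniqueness of the curve through $(0,0)$ at once;
\item disposes of your ``main obstacle'': a zero $s_0>0$ of $Y_{-+,0}\ge0$ would have slope $2s_0>0$, which is impossible.
\end{itemize}
Contrary to your remark, there is no sign difference with the $++$ case. \eqref{010-tw-nv} and \eqref{303-tw-nv} are the same smooth function of $T$, and the two cases are the two branches of the unstable manifold of one saddle. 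The stable manifold theorem would therefore give Step 1 outright.

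\textbf{The gap: the third bullet of item 2.} Repeating \eqref{012-tw-nv}--\eqref{014-tw-nv} for $W:=-(T_c+\frac{1}{2c}R_c)>0$ gives $\alpha W\le\partial_\xi W\le\beta W$ with $\alpha=\min\{2c,\frac{1}{4c}\}$ and $\beta=\max\{2c,\frac{1}{2c}\}$. (The lower constant $\frac{1}{4c}+2c$ in \eqref{013-tw-nv} is wrong.) This yields $W(0)e^{\alpha\xi}\le W\le W(0)e^{\beta\xi}$ only for $\xi\ge0$, with the inequalities reversed for $\xi\le0$.

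No argument can do better in the stated form:
\begin{itemize}
\item Your heuristic $Y\approx s^2$ should read $Y\approx\lambda^2s^2$, where $\lambda^2+k\lambda-1=0$ and $k=\sigma_\infty(1-\sigma_\infty)/c>0$. Hence $\lambda<1$, and $|R_c|$ decays like $e^{\lambda\xi}$ as $\xi\to-\infty$.
\item As $\xi\to+\infty$, $\sigma_c\to0$, and $\partial_\xi^2R_c=R_c+\sigma_\infty-\sigma_c$ has bounded forcing, so $|R_c|\sim Be^{\xi}$.
\item So $|R_c|<c_{1,-+}e^{c_{1,-+}\xi}$ on all of $\mathbb R$ would require both $c_{1,-+}\ge1$ and $c_{1,-+}\le\lambda$.
\end{itemize}
The same defect is present in Proposition~\ref{prop:tw++}.

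\textbf{What is provable.} The piecewise bound
\begin{itemize}
\item $C^{-1}e^{\beta\xi}\le|R_c|,|T_c|\le Ce^{\alpha\xi}$ for $\xi\le0$,
\item $C^{-1}e^{\alpha\xi}\le|R_c|,|T_c|\le Ce^{\beta\xi}$ for $\xi\ge0$,
\end{itemize}
with sharp rates $\lambda$ and $1$. It passes from $W$ to $R_c$ and $T_c$ separately because $\mu s\le\sqrt{Y_{-+,0}(s)}\le s$ along the curve for some $\mu>0$. This follows from three facts:
\begin{itemize}
\item your subsolution near $s=0$;
\item positivity of $Y_{-+,0}$ on compacts;
\item $Y_{-+,0}\ge s^2-2\sigma_\infty s$ for large $s$.
\end{itemize}
Together these make $|R_c|$, $|T_c|$ and $W$ mutually comparable.
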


\subsubsection{The case when $ \sigma_c < \sigma_\infty $ and $ c < 0 $.}\label{subsubsec:--}

Thanks to proposition \ref{prop:tw-nv-cnt}, we have $ T_c = \partial_\xi S_c > 0 $. Thus from \eqref{008-tw-nv}, one has 
\begin{equation}
    \label{401-tw-nv}
    R_c(\xi) = \int_{-\infty}^\xi T_c(\xi')\,d\xi' > 0.
\end{equation}
Moreover, $ p = p(x) $ from \eqref{006-tw-nv}, one can easily verify that
\begin{equation}
    \label{402-tw-nv}
    p(0) = - c \sigma_\infty > 0, \quad p(\sigma_\infty) < 0, \quad \text{and} \quad  p(1) = c(1-\sigma_\infty)  < 0.
\end{equation}
Since $ p $ has a positive leading order term, one has that 
\begin{equation}
    \label{403-tw-nv}
    \sigma_c = \sigma_c(T_c) : = \frac{(T_c - c) - \sqrt{(c-T_c)^2 + 4 c T_c \sigma_\infty}}{2 T_c} = \frac{-2 c \sigma_\infty}{(T_c-c) + \sqrt{(c-T_c)^2 + 4 c T_c \sigma_\infty}},
\end{equation}
and
\begin{equation}
    \label{404-tw-nv}
    \frac{d\sigma_c}{dT_c} = \sigma_c^2(T_c) \frac{ \sqrt{(c-T_c)^2 + 4c T_c \sigma_\infty} + \lbrack (T_c - c) + 2 c \sigma_\infty \rbrack }{ 2c\sigma_\infty \sqrt{(c-T_c)^2 + 4c T_c \sigma_\infty}} < 0,  \qquad \forall \ T_c \geq 0. 
\end{equation}
Thus, $ \sigma_\infty = \sigma_c(0) > \sigma_c(T_c) > \sigma_c(\infty) = 0 $. 

Then correspondingly, one considers
\begin{equation}
    \label{405-tw-nv}
    \frac{d Y_{--}}{d R_{--}} = \frac{\partial_\xi T_{--}^2}{\partial_\xi R_{--}} = 2 \lbrack R_{--} + \sigma_\infty - \sigma_c( \sqrt{Y_{--}}) )\rbrack, 
\end{equation}
where $ Y_{--} : = T_{--}^2 $ and $ \sigma_c $ given by \eqref{403-tw-nv}. With similar arguments as in section \ref{subsubsec:++}, one can show that there exists an integral curve $ (R_{--}, Y_{--,0}(R_{--})) $ of \eqref{405-tw-nv} for all $ R_{--} \geq 0 $,  passing through $ (R_{--}, Y_{--}) = (0,0) $, satisfying  $ Y_{--,0} > 0 $ for all $ R_{--} > 0 $. Thus one has the following proposition:
\begin{proposition}
    \label{prop:tw--}
    \begin{enumerate}
    \item Let $ (R_{--}, Y_{{--},0}(R_{--}) ) $ be the unique integral curve of \eqref{405-tw-nv} passing though $ (R_{--}, Y_{--}) = (0,0) $ as above. Then one has that $ Y_{--,0}(R_{--}) > 0, \ \forall \ R_{--} > 0 $.   

    \item 
    Consider the system of ODEs in \eqref{008-tw-nv} with \eqref{203-tw-nv} and $ (R_c, T_c)\vert_{\xi = 0} = (R_{--}, \sqrt{Y_{--,0}(R_{--})}) $ for arbitrary $ R_{--} > 0 $. 
    Then the initial value problem exists a global solution for all $ \xi \in \mathbb R $, satisfying
    \begin{itemize}
        \item $ R_c > 0 $ and $ T_c > 0 $ for all $ \xi \in \mathbb R $; 
        \item $ \lim_{\xi\rightarrow -\infty} (R_c, T_c) = 0 $;
        \item $ c_{1,--} e^{ c_{1,--} \xi} < R_c(\xi) , \ T_c(\xi) < c_{2,--} e^{ c_{2,--} \xi}, \ \forall \xi \in \mathbb R $, for some $ c_{1,--}, c_{2,--} \in (0,\infty) $. 
    \end{itemize}

    \item
    In particular, one can obtain the traveling wave solution $ (\sigma_c, S_c) $, satisfying \eqref{eq:tw-nv-ff}, by the formula \eqref{403-tw-nv} and $ S_c = R_c + \sigma_\infty $. 
    \end{enumerate}
\end{proposition}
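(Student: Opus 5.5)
The plan follows the template of Proposition \ref{prop:tw++}. In this case $\sigma_c<\sigma_\infty$, $c<0$, $T_c>0$ and $R_c>0$. The first step is to confirm the algebraic inversion \eqref{403-tw-nv}. Since $p(0)>0>p(\sigma_\infty)$ and the leading coefficient $T_c$ of $p$ is positive, $\sigma_c(T_c)$ must be the smaller root of $p$, which lies in $(0,\sigma_\infty)$. From \eqref{404-tw-nv} the map $T_c\mapsto\sigma_c(T_c)$ is decreasing on $[0,\infty)$, with $\sigma_c(0)=\sigma_\infty$. For large $T_c$ it behaves like $-c\sigma_\infty/T_c\to0$. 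Hence
\[
0<\sigma_c(\sqrt{Y})<\sigma_\infty \quad\text{for } Y>0,
\]
so $0<\sigma_\infty-\sigma_c(\sqrt Y)<\sigma_\infty$. In addition, $\sigma_c(1-\sigma_c)/|c|\le 1/(4|c|)$.

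\textbf{Integral curve.} Next I construct the integral curve of \eqref{405-tw-nv} through the origin.
\begin{itemize}
\item Take the approximating data $Y(0)=\varepsilon>0$. The right side of \eqref{405-tw-nv} is then $2[R+(\sigma_\infty-\sigma_c)]>0$ for $R\ge0$, so $Y$ is increasing and stays positive, which is easier than in the $++$ case.
\item The right side is bounded above by $2(R+\sigma_\infty)$, which gives $Y\le R^2+2\sigma_\infty R+\varepsilon$ and hence global existence for $R\ge0$.
\item Arzel\`a--Ascoli then yields a limit $Y_{--,0}$ with $Y_{--,0}(0)=0$. Since its slope is positive for $R>0$, we get $Y_{--,0}(R)>0$ for all $R>0$.
\end{itemize}

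\textbf{Uniqueness.} I expect uniqueness near the origin to be the main obstacle, because the right side is only H\"older in $Y$ at $Y=0$. Expanding as in \eqref{106-tw-nv} gives
\[
dY/dR = 2\bigl[R+|\sigma_c'(0)|\sqrt Y+\mathcal O(Y)\bigr].
\]
The sign now helps the difference of two solutions grow. My first attempt will be to set $Z=\sqrt Y$ and write an equation for $dZ/dR$. Any solution from the origin satisfies $Z\sim |\sigma_c'(0)|R$ to leading order, so in the $Z$ variable the right side becomes Lipschitz away from $R=0$. Uniqueness would then follow by comparing two solutions through a Gr\"onwall estimate on $Z_1-Z_2$ divided by $R$. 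If this fails, existence of at least one curve still suffices for the existence claims of the proposition, and uniqueness can be replaced by uniqueness of the minimal and maximal solutions.

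\textbf{The ODE in $\xi$.} Starting from the point $(R_{--},\sqrt{Y_{--,0}(R_{--})})$ at $\xi=0$, I solve \eqref{008-tw-nv} with $\sigma_c$ given by \eqref{403-tw-nv}. Any trajectory satisfies $dT^2/dR=dY/dR$, so it stays on the invariant curve $T=\sqrt{Y_{--,0}(R)}$. This reduces the system to the scalar equation $R'=\sqrt{Y_{--,0}(R)}>0$.
\begin{itemize}
\item \emph{Forward in $\xi$:} $R$ increases. The bound $Y_{--,0}\le R^2+2\sigma_\infty R$ gives at most exponential growth, so there is no blow-up.
\item \emph{Backward in $\xi$:} $R$ decreases to $0$, and it cannot reach $0$ in finite $\xi$ because $\sqrt{Y_{--,0}(R)}\lesssim R$ near $0$. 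So $(R_c,T_c)\to0$ as $\xi\to-\infty$, and $R_c,T_c>0$ throughout.
\end{itemize}

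\textbf{Exponential bounds.} These come from the analogue of \eqref{012-tw-nv}--\eqref{014-tw-nv}. Using $\sigma_c(1-\sigma_c)/|c|\le 1/(4|c|)$ and $T_c,R_c>0$, the combination $T_c+R_c/(2|c|)$ satisfies a two-sided linear differential inequality. Integrating it gives exponential upper and lower bounds on $R_c$ and $T_c$ for all $\xi$.

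Finally, I recover the traveling wave by setting $S_c=R_c+\sigma_\infty$ and $\sigma_c$ from \eqref{403-tw-nv}. By Proposition \ref{prop:tw-nv-cnt} this profile is continuous, so no entropy check at a discontinuity is needed.
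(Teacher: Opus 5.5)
Your proposal follows the paper's template: approximating data $Y(0)=\varepsilon$, Arzel\`a--Ascoli, reduction to the invariant curve, and a linear differential inequality for $Q=T_c+R_c/(2|c|)$. Most of it goes through, but the last step fails.

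\textbf{The exponential bounds.} Integrating $\lambda_1Q\le Q'\le\lambda_2Q$ from $\xi=0$ gives $Q(0)e^{\lambda_1\xi}\le Q(\xi)\le Q(0)e^{\lambda_2\xi}$ only for $\xi\ge0$. For $\xi\le0$ the two exponentials trade places. The paper's \eqref{014-tw-nv}, which you are imitating, has the same defect. So this yields no lower bound $c_{1,--}e^{c_{1,--}\xi}<R_c$ on all of $\mathbb R$, and in this case no argument can, because the claim is false.
\begin{itemize}
\item Near the origin $\sigma_c(1-\sigma_c)/|c|\to\mu_0:=\sigma_\infty(1-\sigma_\infty)/|c|$, and the vector field of \eqref{008-tw-nv} is smooth there. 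The linearization is $R'=T$, $T'=R+\mu_0T$, a saddle with unstable eigenvalue $\lambda_+=(\mu_0+\sqrt{\mu_0^2+4})/2>1$. Hence $R_c$ and $T_c$ decay like $e^{\lambda_+\xi}$ as $\xi\to-\infty$.
\item As $\xi\to+\infty$, $T_c'=R_c+(\sigma_\infty-\sigma_c)$ with $0<\sigma_\infty-\sigma_c<\sigma_\infty$, so $R_c$ and $T_c$ grow exactly like $e^{\xi}$.
\item A lower bound $Ae^{\alpha\xi}$ valid on $\mathbb R$ would need both $\alpha\ge\lambda_+$ and $\alpha\le1$, which is impossible.
\end{itemize}
In the $++$ case the damping term has the opposite sign and $\lambda_+<1$, which is why the problem does not show up there. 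What you can prove here is the piecewise two-sided bound above, the two rates at $\pm\infty$, and an upper bound $R_c,T_c\le Be^{\beta\xi}$ for any $\beta\in[1,\lambda_+]$. Also, bounds on $Q$ give bounds on $R_c$ and $T_c$ separately only once you know $R_c\le T_c\le CR_c$; see below.

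\textbf{Uniqueness.} Your $Z=\sqrt Y$ substitution is a genuine improvement. The paper's uniqueness argument \eqref{106-tw-nv} relies on the sign of the $\sqrt Y$ term, which flips here, and ``similar arguments'' does not cover that. Two corrections:
\begin{itemize}
\item The leading coefficient is not $|\sigma_c'(0)|$. One has $|\sigma_c'(0)|=\mu_0$, and the $2R$ term enters at the same order, so $Z\sim aR$ with $a^2=1+\mu_0a$, i.e.\ $a=\lambda_+$.
\item You do not need the asymptotics at all. Set $g(Z):=(\sigma_\infty-\sigma_c(Z))/Z$, which is smooth, positive and bounded by some $K$ on $[0,\infty)$. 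Every solution through the origin has $Y\ge R^2$, i.e.\ $Z\ge R$, and $Z'=R/Z+g(Z)$. For two such solutions, $W=Z_1-Z_2$ satisfies $(W^2)'=2\bigl(-R/(Z_1Z_2)+g'(\zeta)\bigr)W^2\le2LW^2$ with $W(0^+)=0$. Hence $W\equiv0$, without dividing by $R$.
\end{itemize}

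\textbf{Linear bound on $Z$.} From $Z'\le1+K$ you get $R\le Z\le(1+K)R$. This is what you actually need, and it serves two purposes:
\begin{itemize}
\item It gives the backward limit as $\xi\to-\infty$. The bound $Y\le R^2+2\sigma_\infty R$ alone only gives $\sqrt Y\lesssim\sqrt R$, which would allow $R$ to reach $0$ in finite $\xi$.
\item It gives the comparison $R_c\le T_c\le CR_c$ needed to separate the bounds on $Q$.
\end{itemize}
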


\section{Conclusion}
In this paper, we have constructed a complete classification of stationary and traveling wave solutions to the one-dimensional hyperbolic Keller-Segel system with quorum sensitivity. For vacuum far field, we have shown that stationary solutions exist as a one-parameter family of piecewise smooth profiles with a single jump from vacuum to saturation. Traveling wave solutions in this case include both continuous profiles and those with a single discontinuity, with the entropy condition ruling out jumps between non-vacuum states and determining the admissible jump direction. For non-vacuum far field, we have established that stationary solutions exist only with a single jump connecting the prescribed far field state to either vacuum or saturation, while all traveling wave profiles are necessarily continuous and are characterized by four distinct families depending on the relative size of the profile and the far field value. These results provide explicit examples of the non-uniqueness of entropy-admissible weak solutions and highlight the limitations of the entropy condition as a selection criterion.

\bibliographystyle{plain}
\bibliography{references}

\end{document}